\newcommand{\C}{\mathbb{C}}    %complex number field
\newcommand{\N}{\mathbb{N}}    %natural numbers
\newcommand{\NN}{\mathbb{N}_0} %Natural Nonnegative numbers
\newcommand{\R}{\mathbb{R}}    %real number field
\newcommand{\Z}{\mathbb{Z}}    %integers
\newcommand{\sD}{\mathsf{D}}
\newcommand{\CH}[1]{\mathscr{C}^{#1}(\R)} \newcommand{\Lp}[1]{L_{#1}(\mathbb{R})}
\newcommand{\cd}{\mathcal{R}}  %cascade/refinement operator C: use C or R.
\newcommand{\sm}{\operatorname{sm}}  %smoothness exponent
\newcommand{\sr}{\operatorname{sr}}  %sum rules
\newcommand{\bp}{ \begin{proof} }
\newcommand{\ep}{\hfill \end{proof} }
\newcommand{\be}{ \begin{equation} }
\newcommand{\ee}{ \end{equation} }
\newcommand{\imply}{ \Longrightarrow }
\newcommand{\tp}{\mathsf{T}}  %transpose
\newcommand{\fs}{\operatorname{fsupp}}
\newcommand{\mspan}{\operatorname{span}}
\newcommand{\ld}{\operatorname{ld}}
\newcommand{\pp}{\mathsf{p}}
\newcommand{\pq}{\mathsf{q}}
\newcommand{\lp}[1]{l_{#1}(\mathbb{Z})}
\newcommand{\lrs}[3]{(l_{#1}(\mathbb{Z}))^{#2\times #3}}
\newcommand{\PL}{\Pi}   %ring of all polynomials
\newcommand{\PR}{\mathscr{P}}   %use for polynomial rings
\newcommand{\PV}{\mathscr{V}}
\newcommand{\PB}{\mathscr{B}}
\newcommand{\sd}{\mathcal{S}}  %subdivision operator S
\newcommand{\tz}{\mathcal{T}}  %transition operator T
\newcommand{\err}{E}
\newcommand{\wh}{\widehat}
\renewcommand{\le}{\leqslant}
\renewcommand{\ge}{\geqslant}
\newcommand{\bs}{\backslash}
\newcommand{\bo}{\mathscr{O}} %standard big O notation
\newcommand{\setsp}{\;:\;}     %set separator
\newcommand{\vgu}{\upsilon} %matching filter for matrix filters
\newcommand{\td}{\boldsymbol{\delta}}
\newcommand{\SL}[2]{{W^{#1}_{#2}(\mathbb{R})}}    %1D Sobolev
\newcommand{\ind}{\Lambda} %\triangle
\newtheorem{lemma}{Lemma}
\newtheorem{prop}[lemma]{Proposition}
\newtheorem{theorem}[lemma]{Theorem}
\newtheorem{example}{Example}
\newtheorem{definition}{Definition}
\numberwithin{equation}{section}
\begin{document}

\title{Vector Subdivision Schemes for Arbitrary Matrix Masks}

\author{Bin Han}

\address{Department of Mathematical and Statistical Sciences,
University of Alberta, Edmonton,\quad Alberta, Canada T6G 2G1.
\quad {\tt bhan@ualberta.ca}\quad {\tt http://www.ualberta.ca/$\sim$bhan}
}

\thanks{Research was supported in part by the Natural Sciences and Engineering Research Council of Canada (NSERC) under grant RGPIN-2019-04276.
}

%\thanks{Contact information of corresponding author Bin Han: E-mail: bhan@ualberta.ca, Phone: 1-587-8828375, Fax: 1-780-4926826,  Web: http://www.ualberta.ca/$\sim$bhan}

\makeatletter \@addtoreset{equation}{section} \makeatother

\begin{abstract}
Employing a matrix mask, a vector subdivision scheme is a fast iterative averaging algorithm to compute refinable vector functions for wavelet methods in numerical PDEs and to produce smooth curves in CAGD. In sharp contrast to the well-studied scalar subdivision schemes, vector subdivision schemes are much less well understood, e.g., Lagrange and (generalized) Hermite subdivision schemes are the only studied vector subdivision schemes in the literature. Because many wavelets used in numerical PDEs are derived from refinable vector functions whose matrix masks are not from Hermite subdivision schemes, it is necessary to introduce and study vector subdivision schemes for any general matrix masks in order to compute wavelets and refinable vector functions efficiently. For a general matrix mask, we show that there is only one meaningful way of defining a vector subdivision scheme.
Motivated by vector cascade algorithms and recent study on Hermite subdivision schemes,
we shall define a vector subdivision scheme for any arbitrary matrix mask and then we prove that the convergence of the newly defined vector subdivision scheme is equivalent to the convergence of its associated vector cascade algorithm. We also study convergence rates of vector subdivision schemes.
The results of this paper not only bridge the gaps and establish intrinsic links between vector subdivision schemes and vector cascade algorithms but also strengthen and generalize current known results on Lagrange and (generalized) Hermite subdivision schemes. Several examples are provided to illustrate the results in this paper on various types of vector subdivision schemes with convergence rates.
\end{abstract}

\keywords{Vector subdivision schemes, matrix masks, multiwavelets, vector cascade algorithms, refinable vector functions, generalized Hermite subdivision schemes, convergence rates}

\subjclass[2010]{65T60, 42C40, 65D17, 65D15,  41A05}
\maketitle

\pagenumbering{arabic}

\section{Motivations and Main Results}
\label{sec:intro}

A subdivision scheme is a fast averaging algorithm
to numerically compute wavelets and their derivatives and integrals for wavelet methods in numerical PDEs, to generate smooth curves in CAGD and isogeometric analysis, and to reconstruct signals in a fast wavelet transform.
Due to their many desired properties and intrinsic connections to splines and wavelets,
scalar subdivision schemes have been extensively studied and well understood in the literature, for example, see \cite{cdm91,dl02,hanbook,hj98,hj06} and many references therein.
Employing an $r\times r$ matrix mask $a=\{a(k)\}_{k\in \Z}: \Z\rightarrow \C^{r\times r}$ instead of a scalar mask (i.e., $r=1$) and following the same fashion as in a scalar subdivision scheme,
a vector subdivision scheme is an iterative averaging algorithm to produce a sequence of vector refinement subdivision data.
However, in sharp contrast to the well-studied scalar subdivision schemes, vector subdivision schemes are more challenging to analyze and much less understood in the literature.

Let us first recall some notations and definitions. Let $r,s\in \N$ be positive integers.
By $\lrs{}{s}{r}$ we denote the linear space of all sequences $u=\{u(k)\}_{k\in \Z}: \Z \rightarrow \C^{s\times r}$.
Similarly,
$\lrs{0}{s}{r}$ consists of all finitely supported sequences $u\in \lrs{}{s}{r}$ such that $\{k\in \Z \setsp u(k)\ne 0\}$ is finite.
Let $a=\{a(k)\}_{k\in \Z}\in \lrs{0}{r}{r}$, which is often called a matrix \emph{mask} in CAGD and a matrix \emph{filter} in multiwavelet theory. A vector subdivision scheme can be conveniently expressed through \emph{the vector/matrix subdivision operator} $\sd_a: \lrs{}{s}{r} \rightarrow \lrs{}{s}{r}$ which is defined to be
\be \label{sd}
(\sd_a v)(j):=2\sum_{k\in \Z} v(k) a(j-2k),\qquad j\in \Z
\ee
for $v=\{v(k)\}_{k\in \Z}\in \lrs{}{s}{r}$. In applications, one often considers $s=1$ for vector-valued data and uses real-valued masks $a$.
A vector subdivision operator in \eqref{sd} can be efficiently implemented through convolution. For $u\in \lrs{0}{r}{s}$ and $v\in \lrs{}{s}{t}$, the convolution $u*v$ is defined to be $[u*v](j):=\sum_{k\in \Z} u(j-k)v(k)$ for all $j\in \Z$.
For $\gamma\in \Z$, the $\gamma$-coset of a mask $a\in \lrs{0}{r}{r}$ is
\be \label{coset}
a^{[\gamma]}(k):=a(\gamma+2k),\qquad k, \gamma\in \Z.
\ee
Then it is straightforward to observe that \eqref{sd} is equivalent to
\[
(\sd_a v)^{[0]}=(\sd_a v)(2\cdot)=
2v*a^{[0]}
\quad \mbox{and} \quad
(\sd_a v)^{[1]}=(\sd_a v)(1+2\cdot)=
2v*a^{[1]}.
\]
Starting with an initial row sequence
$w_0: \Z \rightarrow \C^{1\times r}$, we apply the vector subdivision operator $\sd_a$ on $w_0$ iteratively to produce a sequence of refinement vector-valued data $\{\sd_a^n w_0\}_{n=1}^\infty$. The main goal of a convergent vector subdivision scheme is to guarantee that the refinement data $\{\sd_a^n w_0\}_{n=1}^\infty$, after rescaling if necessary, converge to some smooth nontrivial vector function in a meaningful way.
The most extensively studied and well-known vector subdivision schemes in the literature are
Lagrange and Hermite ones.
For any input vector sequence $w_0\in \lrs{}{1}{r}$,
the (vector-valued) Lagrange subdivision scheme seeks a continuous function $\eta$ such that for every constant $K>0$,
\be \label{lsd:converg}
\lim_{n\to \infty} \sup_{k\in \Z \cap [-2^n K, 2^n K]} |(\sd_a^n w_0)(k) e_\ell-\eta (2^{-n}k)|=0,\qquad \ell=1,\ldots,r,
\ee
where $e_\ell$ is the $\ell$-th unit coordinate column vector in $\R^r$.
See \cite{dm06,hyx05,ms98} and references therein on Lagrange subdivision schemes, which are however called vector subdivision schemes there.
A Hermite subdivision scheme is quite different by using consecutive derivatives. For any input vector sequence $w_0\in \lrs{}{1}{r}$,
the Hermite subdivision scheme of order $r$ seeks a smooth function $\eta\in \CH{r-1}$ such that for every constant $K>0$ and $\ell=1,\ldots,r$,
\be \label{hsd:converg}
\lim_{n\to \infty} \sup_{k\in \Z \cap [-2^n K, 2^n K]} |(\sd_a^n w_0)(k) \sD^{-n}e_\ell-\eta^{(\ell-1)}(2^{-n}k)|=0
\quad \mbox{with}\quad
\sD:=\mbox{diag}(1,2^{-1},\ldots, 2^{1-r}),
\ee
where $\eta^{(\ell-1)}$ stands for the $(\ell-1)$-th order derivative of $\eta$ and the diagonal matrix $\sD$ rescales the original refinement data $\{\sd_a^n w_0\}_{n=1}^\infty$ due to derivatives.
An interpolatory Hermite subdivision scheme employs a special matrix mask $a$ satisfying $a(0)=2^{-1}\sD$
and $a(2k)=0$ for all $k\in \Z\bs\{0\}$.
Interpolatory Hermite subdivision schemes of order $2$ were initially studied in Merrien \cite{mer92} and Dyn and Levin in \cite{dl95}, and further investigated in
\cite{han01,han03,hanbook,zhou00} and references therein.
More recently, Hermite subdivision schemes have been extensively studied by many researchers, e.g.,
see \cite{ccmm21,ch19,cmr14,dm09,han20,hyx05,ms17,ms19,mhc20,rv20} and many references therein.
A Hermite subdivision scheme can be easily restated as a nonstationary Lagrange subdivision scheme, because $w_n:=(\sd_a^n w_0)\sD^{-n}$ satisfies
$w_n=\sd_{\sD^{n-1} a\sD^{-n}} w_{n-1}$ for all $n\in \N$ with the level-dependent masks $\sD^{n-1} a\sD^{-n}$.
Because a Hermite subdivision scheme in \eqref{hsd:converg} involves derivatives for $r>1$,
a Hermite subdivision scheme is often regarded as more difficult to study than a Lagrange one. To unify and generalize Lagrange and Hermite subdivision schemes, a generalized Hermite subdivision scheme of type $\ind$ has been recently introduced and analyzed in \cite{han21}, where
$\ind=\{\nu_1,\ldots,\nu_r\}$ is an ordered multiset of $\{0,\ldots,m\}$, i.e., all $\nu_\ell\in \{0,\ldots,m\}$ are not necessarily distinct for  $\ell=1,\ldots,r$.
A generalized Hermite subdivision scheme of type $\ind$ seeks a function
$\eta\in \CH{m}$ such that for every constant $K>0$ and $\ell=1,\ldots,r$,
\be \label{ghsd:converg}
%\begin{split}
\lim_{n\to \infty} \sup_{k\in \Z \cap [-2^n K, 2^n K]} |(\sd_a^n w_0) \sD_\ind^{-n}e_\ell-\eta^{(\nu_\ell)}(2^{-n}k)|=0
\quad
\mbox{with}\quad
\sD_\ind:=\mbox{diag}(2^{-\nu_1},\ldots, 2^{-\nu_r}).
%\end{split}
\ee
Obviously, a Lagrange or Hermite subdivision scheme is just a generalized Hermite subdivision scheme of type $\ind=\{0,\ldots,0\}$ or $\ind=\{0,1,\ldots,r-1\}$, respectively.
See \cite{han21} for more details on multivariate generalized Hermite subdivision schemes of type $\ind$.
%with or without the interpolation properties.
But for a lot of matrix masks in multiwavelet theory, the Lagrange, Hermite or generalized Hermite subdivision schemes are often divergent, i.e., \eqref{lsd:converg}, \eqref{hsd:converg} and \eqref{ghsd:converg} often fail. In particular, the matrix masks of many non-spline multiwavelets in numerical PDEs cannot be used in any known subdivision schemes to compute derivatives for wavelet collocation methods and integrals for wavelet Galerkin methods in numerical PDEs. These motivate us to study how to properly define a vector subdivision scheme for a general matrix mask.

To explain our motivations for properly defining a vector subdivision scheme,
let us first explain our motivation based on a simple observation.
Let us consider $\{v_n\}_{n=1}^\infty$ such that each $v_n: \Z \rightarrow \C$ is a chosen entry of the refinement vector-valued data $(\sd_a^n w_0) \sD_\ind^{-n}$ in \eqref{ghsd:converg}. More generally, we can simply allow $v_n: \Z \rightarrow \C$ to be a general sequence which is not necessarily generated by any subdivision scheme. We hope that $\{v_n\}_{n=1}^\infty$ converges (in a meaningful way) to some smooth nontrivial function. More precisely, for a given real number $\tau\in \R$, we are interested in whether there exists a continuous function $\eta_\tau$ such that for every constant $K>0$,
\be \label{vsd:tau}
\lim_{n\to \infty}
\sup_{k\in \Z\cap [-2^n K, 2^nK]}
|v_n(k)2^{\tau n} -\eta_\tau(2^{-n}k)|=0.
\ee
It is straightforward to observe from \eqref{vsd:tau} that there exists a unique $\tau_*\in \R\cup\{\pm \infty\}$ given by
\[
\tau_*:=\sup\Big\{ \tau\in \R \setsp \lim_{n\to \infty}
\sup_{k\in \Z\cap [-2^n K, 2^nK]}
|v_n(k)|2^{\tau n}=0,\; \mbox{ for every }\; K>0\Big \}
\]
(we define $\tau_*:=-\infty$ if the above set on the right-hand side is empty)
such that
\begin{enumerate}
\item[(i)] for all $\tau<\tau_*$, the limit in \eqref{vsd:tau} converges to the trivial function $\eta_\tau=0$. Therefore, all the information in $\{v_n\}_{n=1}^\infty$ is lost and the limiting process in \eqref{vsd:tau} is degenerate and trivial;
\item[(ii)] for all $\tau>\tau_*$, the limit in \eqref{vsd:tau} is divergent and there does not exist a continuous function $\eta_\tau$ satisfying \eqref{vsd:tau}. Therefore, the limiting process in \eqref{vsd:tau} is meaningless;
\item[(iii)] for $\tau=\tau_*$, the existence of a limit function $\eta_{\tau_*}$ in \eqref{vsd:tau} is not clear and needs further study.
\end{enumerate}
The above simple observation shows that for the convergence of $\{v_n\}_{n=1}^\infty$ to a nontrivial continuous function in \eqref{vsd:tau}, the original data sequence $\{v_n\}_{n=1}^\infty$ should be properly rescaled by a unique scaling exponent $\tau_*$, which is uniquely determined by the data $\{v_n\}_{n=1}^\infty$.
Hence, we actually don't have any choice or freedom to define a vector subdivision process and the key is to find such a critical exponent $\tau_*$ for every entry of the vector refinement data $\{w_n:=\sd_a^n w_0\}_{n=1}^\infty$ generated by a vector subdivision scheme. If $\{v_n\}_{n=1}^\infty$ comes from the refinement data $\{w_n\}_{n=1}^\infty$, then it is possible that the above unique critical exponent $\tau_*$ may depend on both the mask $a$ and the initial input data $w_0\in \lrs{0}{1}{r}$. This will make things quite complicated for defining vector subdivision schemes. Fortunately, we shall show in this paper that this never happens and $\tau_*$ only depends on the mask $a$.
The above simple observation motivates us to define a general vector subdivision scheme for a general matrix mask.

Our second motivation is from the perspective of refinable vector functions
in multiwavelet theory.
For any convergent (such as Lagrange, Hermite or generalized Hermite) subdivision scheme,
because a subdivision scheme is iterative in nature,  its basis vector function $\phi=[\phi_1,\ldots,\phi_r]^\tp$ should naturally be a refinable vector function, that is, $\phi$ satisfies the following vector refinement equation:
\be \label{refeq}
\phi=2\sum_{k\in \Z} a(k) \phi(2\cdot-k),
\quad \mbox{or equivalently,}\quad
\wh{\phi}(2\xi)=\wh{a}(\xi)\wh{\phi}(\xi),
\ee
where the Fourier transform used in this paper is defined to be $\wh{f}(\xi):=\int_\R f(x) e^{-ix\xi} dx$ for $\xi\in \R$ and can be naturally extended to tempered distributions via duality.
Here $\wh{a}$ in \eqref{refeq} is the \emph{symbol} or \emph{Fourier series} of $a\in \lrs{0}{r}{r}$ defined by
\be \label{fourier}
\wh{a}(\xi):=\sum_{k\in \Z} a(k) e^{-ik\xi},\qquad \xi\in \R,
\ee
which is an $r\times r$ matrix of $2\pi$-periodic trigonometric polynomials. Note that $\wh{u*v}(\xi)=\wh{u}(\xi)\wh{v}(\xi)$.
In multiwavelet theory, there are many smooth compactly supported refinable vector functions in $(\CH{m})^r$ (e.g., see \cite{cjr02,han01,han03,han09,hanbook,jj03,jrz98}) satisfying \eqref{refeq} with finitely supported matrix masks $a\in \lrs{0}{r}{r}$. Therefore, it is an important topic for properly defining vector subdivision schemes to efficiently compute the refinable vector functions and their derivatives on a fine grid $2^{-n}\Z$ for some large $n\in \N$. This task is particularly crucial for wavelet collocation methods to solve nonlinear PDEs and for wavelet Galerkin methods in numerical PDEs.
For scalar refinable functions (i.e., $r=1$), a scalar subdivision scheme is a major popular tool to achieve this goal (e.g., \cite{cdm91,dl02,hanbook,hj98} and \cite[Theorem~2.1]{hj06}). However, for the vector case $r>1$, the generalized Hermite subdivision schemes with such masks $a$ are often divergent. Hence, from the perspective of refinable vector functions, it is necessary to introduce the definition of a vector subdivision scheme for a general matrix mask.

To state our definition of a vector subdivision scheme for a general matrix mask, let us recall some necessary notations.
For two smooth functions $\wh{f}$, $\wh{g}$ and for $m\in \NN:=\N\cup\{0\}$, throughout the paper we shall adopt the following convention:
\be \label{bo}
\wh{f}(\xi)=\wh{g}(\xi)+\bo(|\xi|^{m+1}),\quad \xi \to 0 \quad \mbox{just stands for}\quad
\wh{f}^{(j)}(0)=\wh{g}^{(j)}(0)\quad \forall\; j=0,\ldots,m.
\ee
For $m\in \NN$ and a $2\pi$-periodic trigonometric polynomial $\wh{u}$, we define
\be \label{ld}
\ld_m(\wh{u}):=\begin{cases}
j, &\text{if $j\in \{0,\ldots,m\}$, $\wh{u}^{(j)}(0)\ne 0$ and $\wh{u}^{(\ell)}(0)=0$ for all $\ell=0,\ldots, j-1$},\\
m, &\text{if $\wh{u}^{(\ell)}(0)=0$ for all $\ell=0,\ldots,m$}.
\end{cases}
\ee
For $a\in \lrs{0}{r}{r}$ and $m\in \NN$,
we say that
$\vgu_a\in \lrs{0}{1}{r}$ is \emph{an order $m+1$ matching filter of $a$} if
\be \label{mfilter}
\wh{\vgu_a}(0)\ne 0 \quad \mbox{and}\quad
\wh{\vgu_a}(2\xi)\wh{a}(\xi)=\wh{\vgu_a}(\xi)+\bo(|\xi|^{m+1}),\qquad \xi \to 0.
\ee

Now we are ready to define a vector subdivision scheme for a general matrix mask $a\in \lrs{0}{r}{r}$.

\begin{definition}\label{def:vsd}
\normalfont
Let $r\in \N$ and $m\in \NN$. Let $a\in \lrs{0}{r}{r}$ be a finitely supported matrix mask
and $\vgu_a\in \lrs{0}{1}{r}$ be an order $m+1$ matching filter of $a$ satisfying \eqref{mfilter}.
We say that \emph{the vector subdivision scheme with mask $a$ is $\CH{m}$ convergent} if for every input vector sequence $w_0\in \lrs{}{1}{r}$, there exists a $\CH{m}$ function $\eta: \R\rightarrow \C$ such that for every constant $K>0$ and $u\in (\lp{0})^r$,
\be \label{vsd:converg}
\lim_{n\to \infty} \max_{k\in \Z \cap[-2^n K, 2^n K]}
|[(\sd_a^n w_0)*u](k) 2^{j n}- \beta_j \eta^{(j)}(2^{-n}k)|=0
\;\; \mbox{with}\;\;
j:=\ld_m(\wh{\vgu_a}\wh{u}),\; \beta_j:=\tfrac{[\wh{\vgu_a}\wh{u}]^{(j)}(0)}{i^j j!}.
\ee
In fact, the above definition of $j\in \{0,\ldots, m\}$ and $\beta_j\in \C$ in \eqref{vsd:converg} is equivalent to saying that
\be \label{vguau}
\wh{\vgu_a}(\xi)\wh{u}(\xi)=\beta_j (i\xi)^{j}+\bo(|\xi|^{j+1}), \quad \xi \to0 \quad \mbox{such that $\beta_j\ne 0$ if $j<m$}.
\ee
\end{definition}

Before justifying \cref{def:vsd},
we make some remarks about the matching filter $\vgu_a$ in \cref{def:vsd}.
It is important to observe that \eqref{mfilter} for a matching filter $\vgu_a\in \lrs{0}{1}{r}$ only depends on the values $\{\wh{\vgu_a}^{(j)}(0)\}_{j=0}^m$ instead of the filter $\vgu_a$ itself. We shall discuss
matching filters $\vgu_a$ from the perspective of refinable vector functions in \cref{sec:vca}. In particular, if the matrix $\wh{a}(0):=\sum_{k\in \Z} a(k)$ satisfies the condition:
\be \label{cond:a}
\mbox{$1$ is a simple eigenvalue of $\wh{a}(0)$ and all its other eigenvalues are less than $2^{-m}$ in modulus},
\ee
then up to a multiplicative constant,
all the values $\{\wh{\vgu_a}^{(j)}(0)\}_{j=0}^m$ in \eqref{mfilter} are uniquely determined by the following recursive formula (e.g., see \cite[Proposition~3.1]{han03} or
\cite[(5.6.10)]{hanbook}):
\be \label{vgua:value}
\wh{\vgu_a}(0)\wh{a}(0)=\wh{\vgu_a}(0)
\;\; \mbox{and}\;\;
\wh{\vgu_a}^{(j)}(0)=\sum_{k=0}^{j-1} \frac{2^k j!}{k!(j-k)!} \wh{\vgu_a}^{(k)}(0)\wh{a}^{(j-k)}(0)[I_r-2^j\wh{a}(0)]^{-1},\quad 1\le j\le m.
\ee
For the scalar case $r=1$ and $\wh{a}(0):=\sum_{k\in \Z} a(k)=1$, \eqref{cond:a} is trivially true for all $m\in \NN$ and hence, the values
$\{\wh{\vgu_a}^{(j)}(0)\}_{j=0}^m$ satisfying \eqref{mfilter} are uniquely determined by \eqref{vgua:value} under the normalization condition $\wh{\vgu_a}(0)=1$, from which we further conclude that \eqref{vguau} with $r=1$ is satisfied if and only if
\[
\wh{u}(\xi)=\beta_j (i\xi)^{j}+\bo(|\xi|^{j+1}), \quad \xi \to0 \quad \mbox{such that $\beta_j\ne 0$ if $j<m$},
\]
which does not explicitly involve the matching filter $\vgu_a\in \lp{0}$.
By $\td$ we denote the \emph{Dirac sequence} such that $\td(0):=1$ and $\td(k):=0$ for all $k\in \Z\bs\{0\}$. Due to $\sD_\ind=\mbox{diag}(2^{-\nu_1},\ldots, 2^{-\nu_r})$, we observe that $(\sd_a^n w_0)(k)\sD_\ind^{-n} e_\ell$ in \eqref{ghsd:converg} can be equivalently rewritten as
$[(\sd_a^n w_0)*u](k) 2^{ \nu_\ell n}$ with $u:=\td e_\ell$. Therefore, \eqref{vsd:converg} in \cref{def:vsd} obviously covers all the special cases in \eqref{lsd:converg}, \eqref{hsd:converg} and \eqref{ghsd:converg}.
%We shall use $I_r$ for the $r\times r$ identity matrix.

In the following, we shall justify \cref{def:vsd} from the perspectives of refinable vector functions and vector cascade algorithms.
To justify \cref{def:vsd} and to explain the rescaling factor in \eqref{vsd:converg} from the perspective of refinable vector functions, we shall prove the following result in \cref{sec:proof}.

\begin{theorem}\label{thm:mfilter}
Let $r\in \N$, $m\in \NN$ and $u\in (\lp{0})^r$.
Let $a\in \lrs{0}{r}{r}$ be a finitely supported matrix mask and $\vgu_a\in \lrs{0}{1}{r}$ be an order $m+1$ matching filter of the mask $a$. Suppose that there exist a real number $\tau\in [0,\infty)$ and a continuous vector function $\eta$ on $\R$ such that
\be \label{phi:deriv:u}
\lim_{n\to \infty} \| [(\sd_a^n (\td I_r))*u](\cdot)2^{\tau n}- \eta(2^{-n}\cdot)
\|_{(\lp{\infty})^{r}}=0.
\ee
Then the following statements hold:
\begin{enumerate}
\item[(1)] $[\wh{\vgu_a}\wh{u}]^{(\ell)}(0)=0$ for all $\ell\in \{0,\ldots,m\}$ satisfying $\ell<\tau$, the following limit
\be \label{phi:deriv:uc}
\lim_{n\to \infty} \| [(\sd_a^n (\td I_r))*(u*c)](\cdot)2^{\tau n}- \wh{c}(0)\eta(2^{-n}\cdot)
\|_{(\lp{\infty})^{r}}=0,\qquad \forall\; c\in \lp{0}
\ee
holds, and $\eta$ in \eqref{phi:deriv:u} must be a compactly supported refinable vector function satisfying
\be \label{limit:eta}
\wh{\eta}(2\xi)=2^{\tau}\wh{a}(\xi)\wh{\eta}(\xi)
\quad \mbox{and}\quad
\wh{\eta}(\xi)=\lim_{n\to \infty} \Big[2^{\tau n} \Big(\prod_{j=1}^{n} \wh{a}(2^{-j}\xi)\Big) \wh{u}(2^{-n}\xi)\Big],\qquad \xi\in \R.
\ee

\item[(2)] If  $\tau\in [0,m]$ and $\eta$ in \eqref{phi:deriv:u} is not identically zero and if \eqref{cond:a} holds, then $\tau=j:=\ld_m(\wh{\vgu_a}\wh{u})$ must be an integer, $\phi\in (\CH{j})^r$, and $\eta=\beta_j \phi^{(j)}$, where $\beta_j:=\frac{[\wh{\vgu_a}\wh{u}]^{(j)}{(0)}}{i^j j!}\ne 0$ and $\phi$ is the unique vector of compactly supported distributions satisfying $\wh{\phi}(2\xi)=\wh{a}(\xi)\wh{\phi}(\xi)$ and $\wh{\vgu_a}(0)\wh{\phi}(0)=1$.
\item[(3)] If $\tau=j\in \{0,\ldots,m\}$ and $\beta_j:=\frac{[\wh{\vgu_a}\wh{u}]^{(j)}(0)}{i^jj!}\ne 0$, then the vector function $\eta$ in \eqref{phi:deriv:u} cannot be identically zero, $j=\ld_m(\wh{\vgu_a}\wh{u})$ must hold, and there exists a compactly supported vector function $\varphi\in (\CH{j})^r$ such that $\eta=\beta_j \varphi^{(j)}$, $\wh{\varphi}(2\xi)=\wh{a}(\xi)\wh{\varphi}(\xi)$ and $\wh{\vgu_a}(0)\wh{\varphi}(0)=1$.
\end{enumerate}
\end{theorem}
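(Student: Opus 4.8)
The plan is to prove the three parts in sequence, using the Fourier-side fixed-point structure of the subdivision operator as the central tool. The key algebraic fact I would establish first is an exact formula for the symbol of $(\sd_a^n(\td I_r))*u$: since $\wh{\sd_a v}(\xi) = 2\wh{v}(2\xi)\wh{a}(\xi)$ (read off from \eqref{sd}), iterating gives $\wh{(\sd_a^n(\td I_r))*u}(\xi) = 2^n\left(\prod_{j=0}^{n-1}\wh{a}(2^j\xi)\right)\wh{u}(2^n\xi)$ in one ordering; more useful for the cascade picture is to dilate and write, for the function $\eta_n(x) := \sum_{k\in\Z}[(\sd_a^n(\td I_r))*u](k)2^{\tau n}\chi_n(x-2^{-n}k)$ or directly compare with the cascade iterates $\wh{\phi_n}(\xi) = \left(\prod_{j=1}^n\wh{a}(2^{-j}\xi)\right)\wh{u}(2^{-n}\xi)$. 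The hypothesis \eqref{phi:deriv:u} says precisely that $2^{\tau n}\phi_n \to \eta$ in a suitable discrete-uniform sense; so the convergence of the vector cascade algorithm (in the sense already developed in the paper, which I am entitled to cite) forces $\eta$ to be compactly supported and to satisfy $\wh{\eta}(2\xi) = 2^\tau\wh{a}(\xi)\wh{\eta}(\xi)$ together with the product formula in \eqref{limit:eta}. The condition on $[\wh{\vgu_a}\wh{u}]^{(\ell)}(0)$ for $\ell<\tau$ follows by pairing: the matching-filter identity \eqref{mfilter} shows $\wh{\vgu_a}(2^n\xi)\prod_{j=0}^{n-1}\wh{a}(2^j\xi) = \wh{\vgu_a}(\xi) + \bo(|2^n\xi|^{m+1}$ near $0$ appropriately rescaled), so scalar-multiplying $\eta$ by $\vgu_a$ collapses the product and leaves $2^{\tau n}\wh{\vgu_a}(\xi)\wh{u}(2^n\xi)$-type terms whose boundedness as $n\to\infty$ after rescaling by $2^{-n}\cdot$ forces the low-order Taylor coefficients of $\wh{\vgu_a}\wh{u}$ to vanish below order $\tau$. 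Finally \eqref{phi:deriv:uc} is routine: convolving with $c\in\lp{0}$ commutes with $\sd_a$ in the variable being dilated and contributes a factor $\wh{c}(2^{-n}\xi)\to\wh{c}(0)$.

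For part (2), assuming $\tau\in[0,m]$, $\eta\not\equiv 0$, and the spectral condition \eqref{cond:a}, I would first argue $\tau$ is an integer. Under \eqref{cond:a} the values $\{\wh{\vgu_a}^{(j)}(0)\}_{j=0}^m$ are pinned down by \eqref{vgua:value}, and the distributional refinable vector $\phi$ with $\wh{\vgu_a}(0)\wh{\phi}(0)=1$ exists and is unique. Using the product formula in \eqref{limit:eta} together with the refinement equation for $\phi$, one can factor $\wh{\eta}(\xi) = \lim_n 2^{\tau n}\wh{\phi_n}(\xi)$ and compare with $\wh{\phi}(\xi) = \lim_n\wh{\phi_n^\phi}(\xi)$ where $\phi_n^\phi$ uses the "correct" seed coming from $\phi$ itself; the ratio of these seeds is governed by $\wh{\vgu_a}\wh{u}$ near $0$ via the superconvergence of the matching-filter-weighted sums (this is where \cite[Proposition~3.1]{han03}-style arguments enter). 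The upshot is that $\wh{\eta}(\xi) = \beta_j(i\xi)^j\wh{\phi}(\xi)(1 + \bo(|\xi|))$ near $0$ can only be a genuine (nonzero, compactly supported, tempered-distribution-valued) limit if $\tau = j$ exactly and $\beta_j\ne 0$, i.e. $j = \ld_m(\wh{\vgu_a}\wh{u})$; then differentiating the refinement equation $j$ times shows $\phi\in(\CH{j})^r$ and $\wh{\beta_j\phi^{(j)}}(2\xi) = 2^\tau\wh{a}(\xi)\wh{\beta_j\phi^{(j)}}(\xi)$, and uniqueness of the solution to \eqref{limit:eta} with a prescribed value at $0$ gives $\eta = \beta_j\phi^{(j)}$.

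For part (3), the hypotheses are reversed: we are given $\tau = j\in\{0,\ldots,m\}$ and $\beta_j\ne 0$, and we must conclude $\eta\not\equiv 0$, $j = \ld_m(\wh{\vgu_a}\wh{u})$, and the existence of $\varphi\in(\CH{j})^r$ with $\eta = \beta_j\varphi^{(j)}$. Here I would run the pairing argument from part (1) in the other direction: scalar-pair $\eta$ against $\vgu_a$. Because $2^{jn}\wh{\vgu_a}(2^n\xi)\wh{\phi_n}(\xi)$ telescopes via \eqref{mfilter} down to $2^{jn}\wh{\vgu_a}(\xi)\wh{u}(2^n\xi) + \bo(\cdot)$, and $\wh{\vgu_a}(\xi)\wh{u}(\xi) = \beta_j(i\xi)^j + \bo(|\xi|^{j+1})$ by \eqref{vguau}, the quantity $2^{jn}\wh{\vgu_a}(2^n\xi)\wh{u}(2^n\xi)$ evaluated at $\xi$ replaced by $2^{-n}\xi$ tends to $\beta_j(i\xi)^j\ne 0$; hence the "$\vgu_a$-component" of $\eta$, which is $\wh{\vgu_a}(\xi)\wh{\eta}(\xi)$-flavoured, cannot vanish, so $\eta\not\equiv 0$, and $j = \ld_m(\wh{\vgu_a}\wh{u})$ follows from matching orders. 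Then $\eta$ being a nonzero solution of $\wh{\eta}(2\xi) = 2^j\wh{a}(\xi)\wh{\eta}(\xi)$ means $\wh{\varphi}(\xi) := (i\xi)^{-j}\beta_j^{-1}\wh{\eta}(\xi)$ (which is well-defined as a distribution near $0$ precisely because the first $j$ Taylor coefficients of $\wh{\eta}$ vanish, by part (1) applied with $\ell<\tau=j$) satisfies $\wh{\varphi}(2\xi) = \wh{a}(\xi)\wh{\varphi}(\xi)$ and $\wh{\vgu_a}(0)\wh{\varphi}(0) = 1$ after normalization; compact support of $\varphi$ and membership in $(\CH{j})^r$ are inherited from $\eta$ and the $j$-fold integration. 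The main obstacle throughout is the rigorous bookkeeping in passing between the discrete-uniform convergence in \eqref{phi:deriv:u} and the pointwise-in-$\xi$ convergence of the symbol products in \eqref{limit:eta}: establishing that the discrete data converges to $\eta$ uniformly on compacta is equivalent to convergence of the quasi-interpolants built from the cascade iterates, and extracting from that the exact cancellation of low-order moments (rather than mere boundedness) requires the matching-filter superconvergence estimates carefully — this is the step I expect to consume most of the real work, and it is where the earlier-established equivalence between vector subdivision and vector cascade algorithms does the heavy lifting.
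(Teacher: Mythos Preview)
Your overall Fourier-side picture is right, and parts of item~(1) (compact support of $\eta$, the product formula and the refinement relation in \eqref{limit:eta}, the convolution identity \eqref{phi:deriv:uc}) will go through roughly as you say. But there are two genuine gaps.

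\textbf{Part (2).} Your route to ``$\tau$ must be an integer'' via ratios of seeds and your claim that ``differentiating the refinement equation $j$ times shows $\phi\in(\CH{j})^r$'' are both problematic. Differentiating the refinement equation cannot manufacture smoothness of the distributional solution $\phi$; the logic runs the other way. The paper's argument is clean: since $\eta$ is compactly supported and nontrivial, $\wh{\eta}$ is analytic with some first nonzero derivative $\wh{\eta}^{(j)}(0)\ne 0$; plugging the Taylor expansion into $\wh{\eta}(2\xi)=2^\tau\wh{a}(\xi)\wh{\eta}(\xi)$ forces $\wh{a}(0)\wh{\eta}^{(j)}(0)=2^{j-\tau}\wh{\eta}^{(j)}(0)$, so $2^{j-\tau}$ is an eigenvalue of $\wh{a}(0)$, and \eqref{cond:a} with $\tau\in[0,m]$ then pins $\tau=j$. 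Smoothness of $\phi$ comes by \emph{integrating} $\eta$: because $\wh{\eta}(0)=\cdots=\wh{\eta}^{(j-1)}(0)=0$, the $j$-fold antiderivative $\eta_j$ is compactly supported, lies in $(\CH{j})^r$, satisfies $\wh{\eta_j}(2\xi)=\wh{a}(\xi)\wh{\eta_j}(\xi)$, and hence equals $\beta_j\phi$ by uniqueness; so $\phi$ inherits $\CH{j}$-regularity from $\eta_j$, not from the refinement equation.

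\textbf{Part (3).} Your pairing argument does not establish $\eta\not\equiv 0$. The product formula in \eqref{limit:eta} is deduced from the hypothesis \eqref{phi:deriv:u}, so if $\eta\equiv 0$ it simply says the limit is zero; you cannot then ``independently'' evaluate the same limit via the telescoped matching-filter identity and get $\beta_j(i\xi)^j$, because the telescoping errors are only controlled to finite order at $\xi=0$, not pointwise in $\xi$. The paper's argument is different in kind: if $\eta\equiv 0$ then $2^{(j+1)n}\|a_n*u\|_{(\lp{\infty})^r}\to 0$, and a cited spectral/joint-spectral-radius fact (\cite[Proposition~4.1]{han03} or \cite[Proposition~5.6.9]{hanbook}) upgrades this to $2^{(j+1+\varepsilon)n}\|a_n*u\|_{(\lp{\infty})^r}\to 0$ for some $\varepsilon>0$. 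Now apply item~(1) with $\tau=j+\varepsilon>j$ to conclude $[\wh{\vgu_a}\wh{u}]^{(j)}(0)=0$, contradicting $\beta_j\ne 0$. This rate-improvement step is the real content of (3) and is missing from your sketch.
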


Refinable vector functions are often studied through vector cascade algorithms. Thus, it is not surprising for a vector subdivision scheme to have a close relation with a vector cascade algorithm.
For $1\le p\le \infty$, the \emph{refinement operator}
$\cd_a: (\Lp{p})^r \rightarrow (\Lp{p})^r$
is defined to be
\be \label{cd}
\cd_a f:=2\sum_{k\in \Z} a(k) f(2\cdot-k),\qquad f\in (\Lp{p})^r.
\ee
Note that $\wh{\cd_a f}(2\xi)=\wh{a}(\xi)\wh{f}(\xi)$.
Hence, a refinable vector function $\phi$ satisfying $\wh{\phi}(2\xi)=\wh{a}(\xi)\wh{\phi}(\xi)$ is a fixed point of $\cd_a$.
For $m\in \NN$,
recall that $\|f\|_{\CH{m}}:=\sum_{j=0}^m \|f^{(j)}\|_{\CH{}}<\infty$ for $f\in \CH{m}$.
For $m\in \NN$ and an order $m+1$ matching filter $\vgu_a\in \lrs{0}{1}{r}$ of a matrix mask $a\in \lrs{0}{r}{r}$, we say that
\emph{the vector cascade algorithm with mask $a$ and an order $m+1$ matching filter $\vgu_a$ of the mask $a$ is $\CH{m}$ convergent} if
the cascade sequence $\{\cd_a^n f\}_{n=1}^\infty$ of vector functions is a Cauchy sequence in $(\CH{m})^r$ for every compactly supported vector function $f\in (\CH{m})^r$ satisfying
\be \label{initialf}
\wh{\vgu_a}(0)\wh{f}(0)=1 \quad \mbox{and}\quad
\wh{\vgu_a}(\xi)\wh{f}(\xi+2\pi k)=\bo(|\xi|^{m+1}),\quad \xi \to 0,\; \forall\; k\in \Z\bs\{0\}.
\ee
It is known in \cite[Theorem~4.3]{han03} or \cite[Theorem~5.6.11]{hanbook} that
the vector cascade algorithm with a mask $a\in \lrs{0}{r}{r}$ and an order $m+1$ matching filter $\vgu_a\in \lrs{0}{1}{r}$ of the mask $a$ is $\CH{m}$ convergent if and only if $\sm_\infty(a)>m$, where the quantity $\sm_\infty(a)$ is defined in \cite{han03,hanbook} (see \cref{sec:vca} for details). In addition, if $\phi$ is a compactly supported vector refinable function satisfying $\wh{\phi}(2\xi)=\wh{a}(\xi)\wh{\phi}(\xi)$ with $\wh{\vgu_a}(0)\wh{\phi}(0)=1$, then $\sm_\infty(a)>m$ implies that $\phi\in (\CH{m})^r$, $\lim_{n\to \infty} \|\cd_a^n f- \phi\|_{(\CH{m})^r}=0$ and the condition in \eqref{cond:a} on $\wh{a}(0)$ must hold.
For $m\in \NN$ and $\vgu_a\in \lrs{0}{1}{r}$,
we define
\be \label{Vm}
\PV_{m,\vgu_a}:=\{ u\in (\lp{0})^r \setsp \wh{\vgu_a}(\xi)\wh{u}(\xi)=\bo(|\xi|^{m+1}),
\quad \xi\to 0\}.
\ee
To justify \cref{def:vsd} from the perspective of vector cascade algorithms,
we shall prove the following result in \cref{sec:proof}  establishing the convergence equivalence between vector subdivision schemes and vector cascade algorithms.

\begin{theorem}\label{thm:main}
Let $r\in \N$ and $m\in \NN$. Let $a\in \lrs{0}{r}{r}$ and let $\vgu_a\in \lrs{0}{1}{r}$ be an order $m+1$ matching filter of $a$.
Let $\PB \subseteq \PV_{m-1,\vgu_a}$ such that $\PV_{m,\vgu_a}\subseteq \mspan\{ u(\cdot-k) \setsp u\in \PB, k\in \Z\}$. Suppose
\be \label{cond:a:0}
1 \mbox{ is a simple eigenvalue of } \wh{a}(0) \mbox{ and all } 2^k, k\in \N \mbox{ are not eigenvalues of } \wh{a}(0).
\ee
It is well known (e.g., see \cite[Theorem~5.1.3]{hanbook}) that there is a unique vector $\phi$ of compactly supported distributions satisfying $\wh{\phi}(2\xi)=\wh{a}(\xi)\wh{\phi}(\xi)$ and $\wh{\vgu_a}(0)\wh{\phi}(0)=1$.
Then the following statements are equivalent to each other:
\begin{enumerate}
\item[(1)] $\sm_\infty(a)>m$, or equivalently, the vector cascade algorithm with the mask $a$ and the order $m+1$ matching filter $\vgu_a$ of the mask $a$ is $\CH{m}$ convergent.
\item[(2)] The refinable vector function $\phi$ belongs to $(\CH{m})^r$ and for all $u\in (\lp{0})^r$,
\be \label{phi:deriv}
\lim_{n\to \infty} \| [(\sd_a^n (\td I_r))*u](\cdot)2^{jn}-\beta_j \phi^{(j)}(2^{-n}\cdot)
\|_{(\lp{\infty})^{r}}=0
 \quad \mbox{with}\quad
j:=\ld_m(\wh{\vgu_a}\wh{u}),\; \beta_j:=\tfrac{[\wh{\vgu_a}\wh{u}]^{(j)}(0)}{i^j j!}.
\ee

\item[(3)] The vector subdivision scheme with mask $a$ is $\CH{m}$ convergent in the sense of \cref{def:vsd}.

\item[(4)] For every $w_0\in \lrs{}{1}{r}$, there exists a $\CH{m}$ function $\eta: \R \rightarrow \C$ such that \eqref{vsd:converg} holds for all constants $K>0$ and $u\in \PB$.

\item[(5)] $\phi\in (\CH{m})^r$ (the condition $\phi\in (\CH{m})^r$ is not required in advance if $[\wh{\vgu_a}\wh{u}]^{(m)}(0)=0$ for all $u\in \PB$), and
    for all $u\in \PB$,
\be \label{phi:deriv:B}
\lim_{n\to \infty} \| [(\sd_a^n (\td I_r))*u](\cdot)2^{ mn}-\beta_m \phi^{(m)}(2^{-n}\cdot)
\|_{(\lp{\infty})^{r}}=0
 \quad \mbox{with}\quad \beta_m:=\tfrac{[\wh{\vgu_a}\wh{u}]^{(m)}(0)}{i^m m!},\qquad \forall\; u\in \PB.
\ee
\end{enumerate}
Moreover, all the items (1)--(5) must hold if
\begin{enumerate}
\item[(6)]
the refinable vector function $\phi$ belongs to $(\CH{m})^r$ and
the integer shifts of $\phi$ are stable, i.e.,
\be \label{stability}
\mspan\{\wh{\phi}(\xi+2\pi k) \setsp k\in \Z\}=\C^r,\qquad \forall\; \xi\in \R.
\ee
\end{enumerate}
\end{theorem}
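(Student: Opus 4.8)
The plan is to prove the cycle of implications $(1)\Rightarrow(2)\Rightarrow(3)\Rightarrow(4)\Rightarrow(5)\Rightarrow(1)$, and then handle $(6)\Rightarrow(1)$ separately. The backbone of the argument rests on the known characterization (cited as \cite[Theorem~4.3]{han03}) that $\sm_\infty(a)>m$ is equivalent to $\CH{m}$-convergence of the vector cascade algorithm, together with \cref{thm:mfilter}, which already connects subdivision-type convergence to refinable vector functions. The main conceptual point — and the reason this theorem is nontrivial — is the bridge between the \emph{cascade} world (functions $\cd_a^n f$ in $(\CH{m})^r$) and the \emph{subdivision} world (sequences $\sd_a^n w_0$ in $\lrs{}{1}{r}$). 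That bridge is the identity relating discrete convolution of subdivision data against a refinable function to the cascade iterates; concretely, one should establish (or invoke) that for a compactly supported $f\in(\CH{m})^r$,
\[
\cd_a^n f = 2^{\text{(shift)}}\sum_{k\in\Z}\,[(\sd_a^n(\td I_r))](\,\cdot\,)\,f(2^n\cdot - k),
\]
i.e.\ that $\cd_a^n f$ and $(\sd_a^n(\td I_r))*(\text{samples of }f)$ differ by a controlled error governed by the modulus of continuity of $f^{(m)}$. This is the computational heart of $(1)\Rightarrow(2)$: one chooses $f$ so that $\wh{\vgu_a}(0)\wh{f}(0)=1$ and $\wh{f}$ has enough vanishing moments at $2\pi\Z\setminus\{0\}$ (as in \eqref{initialf}), uses $\lim_n\|\cd_a^n f-\phi\|_{(\CH{m})^r}=0$, and then extracts the $j$-th derivative asymptotics by differentiating $m$ times and reading off the leading term via $j=\ld_m(\wh{\vgu_a}\wh{u})$ and $\beta_j$.

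\textbf{Step-by-step.} For $(1)\Rightarrow(2)$: invoke the cited equivalence to get $\phi\in(\CH{m})^r$ and $\|\cd_a^n f-\phi\|_{(\CH{m})^r}\to0$; then transfer this to the statement \eqref{phi:deriv} about $(\sd_a^n(\td I_r))*u$ using the cascade/subdivision identity above, handling a general $u\in(\lp{0})^r$ by the moment bookkeeping that identifies the correct rescaling exponent $j$ and constant $\beta_j$ (this is exactly the content already isolated in \cref{def:vsd}, equations \eqref{vsd:converg}--\eqref{vguau}). For $(2)\Rightarrow(3)$: given a general input $w_0\in\lrs{}{1}{r}$, write $\sd_a^n w_0 = \sum_k w_0(k)\,(\sd_a^n(\td I_r))(\cdot-2^n k)$ — using the refinement structure $\sd_a^n(\td I_r)(\cdot-2^n k)=(\sd_a^n(\td\,I_r(\cdot-k)))$ appropriately — so that convergence for the Dirac input $\td I_r$ upgrades to convergence for arbitrary $w_0$, with limit function $\eta=\sum_k w_0(k)\beta_j\phi^{(j)}(\cdot-k)$ (a finite-sum-on-compacta argument; any $w_0$, not just finitely supported, works because on $[-2^nK,2^nK]$ only finitely many translates contribute). $(3)\Rightarrow(4)$ is immediate since $\PB\subseteq(\lp{0})^r$. $(4)\Rightarrow(5)$: apply $(4)$ with the specific choice $w_0=\td I_r$ and each $u\in\PB\subseteq\PV_{m-1,\vgu_a}$; by definition of $\PV_{m-1,\vgu_a}$ and \eqref{ld}, $\ld_m(\wh{\vgu_a}\wh u)=m$, so \eqref{vsd:converg} reads precisely \eqref{phi:deriv:B}, and an application of \cref{thm:mfilter}(3) (or (1)--(2)) with $\tau=m$ identifies the limit as $\beta_m\phi^{(m)}$ and forces $\phi\in(\CH{m})^r$ (the parenthetical weakening when $\beta_m=0$ for all $u\in\PB$ follows since then every limit is trivial and one falls back to \cref{thm:mfilter}(1)). $(5)\Rightarrow(1)$: this is the closing step where the spanning hypothesis $\PV_{m,\vgu_a}\subseteq\mspan\{u(\cdot-k):u\in\PB,k\in\Z\}$ is essential — from convergence \eqref{phi:deriv:B} on the generating set $\PB$ one reconstructs convergence on all of $\PV_{m,\vgu_a}$ by linearity and shift-invariance, and then this (via \cref{thm:mfilter}(1), \eqref{limit:eta}, plus the known converse direction of the cascade characterization) yields $\sm_\infty(a)>m$.

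\textbf{The main obstacle} I expect is the $(5)\Rightarrow(1)$ implication, because it must go ``backwards'': from a convergence statement about finitely many test sequences to the spectral quantity $\sm_\infty(a)>m$. The delicate point is that $\sm_\infty(a)$ is defined (see \cref{sec:vca}) via the restricted spectral radius of the transition operator on a finite-dimensional space of sequences with vanishing moments, and one must verify that $\CH{m}$-type convergence along the spanning set $\PB$ — which a priori only controls the behavior of $\phi^{(m)}$ sampled on dyadic grids — genuinely implies the decay of the relevant iterated operator norm. The cleanest route is probably not to argue with $\sm_\infty(a)$ directly but to reduce \eqref{phi:deriv:B} to $\CH{m}$-convergence of the cascade algorithm on a single well-chosen initial function $f$ satisfying \eqref{initialf} — using the spanning hypothesis to ensure $f$ can be built from $\PB$ and $\phi$ — and then cite the ``only if'' half of \cite[Theorem~4.3]{han03}. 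A secondary technical nuisance is making the cascade/subdivision transfer identity quantitative enough in the $\CH{m}$ norm (controlling $\|f^{(m)}(2^n\cdot)-\text{piecewise interpolant}\|$), which requires the modulus of continuity of $f^{(m)}$ and careful tracking of the factor $2^{mn}$; this is routine but must be done with care so the error genuinely vanishes after rescaling. The implication $(6)\Rightarrow(1)$ is comparatively soft: stability of the shifts of $\phi\in(\CH{m})^r$ is a classical sufficient condition forcing $\sm_\infty(a)>m$ (this is standard, e.g.\ it appears in \cite{hanbook}), so it suffices to cite it.
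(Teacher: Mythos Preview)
Your cycle $(1)\Rightarrow(2)\Rightarrow(3)\Rightarrow(4)\Rightarrow(5)\Rightarrow(1)$ matches the paper's, and your treatments of $(2)\Rightarrow(3)$, $(3)\Rightarrow(4)$, $(4)\Rightarrow(5)$, and $(6)\Rightarrow(1)$ are essentially the paper's. But on the two steps you flag as delicate, the paper takes a cleaner route than you propose.

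\textbf{For $(1)\Rightarrow(2)$.} You plan to work with a general admissible $f$ satisfying \eqref{initialf} and control a modulus-of-continuity error between samples and the cascade iterate. The paper avoids this entirely by using $f=\phi$ itself: since $\phi=\cd_a^n\phi$ exactly, differentiating $j$ times and sampling at $2^{-n}\Z$ gives the \emph{exact} identity $\phi^{(j)}(2^{-n}k)=2^{jn}[(\sd_a^n(\td I_r))*u_{\phi,j}](k)$ with $u_{\phi,j}:=\phi^{(j)}|_\Z$. \cref{lem:phi:Z} shows $\wh{\vgu_a}(\xi)\wh{u_{\phi,j}}(\xi)=(i\xi)^j+\bo(|\xi|^{m_a})$, so for any $u$ one has $u-\beta_j u_{\phi,j}\in\PV_{j,\vgu_a}$; then $2^{jn}\|(\sd_a^n(\td I_r))*(u-\beta_j u_{\phi,j})\|\to0$ is item~(4) of \cref{thm:vca}. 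No approximation error arises at all.

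\textbf{For $(5)\Rightarrow(1)$.} Your proposed detour---build an initial $f$ from $\PB$ and $\phi$ and verify cascade convergence---is unnecessary and not obviously workable. The paper goes straight to item~(4) of \cref{thm:vca}: take $u\in\PV_{m,\vgu_a}$, write $u=\sum_\ell u_\ell*c_\ell$ with $u_\ell\in\PB$, and apply \eqref{phi:deriv:uc} from \cref{thm:mfilter}(1) to each term, obtaining $2^{mn}(\sd_a^n(\td I_r))*u\to\bigl(\sum_\ell\beta_\ell\wh{c_\ell}(0)\bigr)\phi^{(m)}(2^{-n}\cdot)$. The point you do not make explicit---and which is the crux---is the cancellation $\sum_\ell\beta_\ell\wh{c_\ell}(0)=0$. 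This follows because $u\in\PV_{m,\vgu_a}$ gives $[\wh{\vgu_a}\wh{u}]^{(m)}(0)=0$, while each $u_\ell\in\PB\subseteq\PV_{m-1,\vgu_a}$ kills the lower-order cross terms in the Leibniz expansion of $[\wh{c_\ell}\,\wh{\vgu_a}\wh{u_\ell}]^{(m)}(0)$, leaving $0=\sum_\ell\wh{c_\ell}(0)[\wh{\vgu_a}\wh{u_\ell}]^{(m)}(0)=i^m m!\sum_\ell\beta_\ell\wh{c_\ell}(0)$. Thus the limit is zero for every $u\in\PV_{m,\vgu_a}$, which is precisely the decay statement equivalent to $\sm_\infty(a)>m$. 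This is where the hypothesis $\PB\subseteq\PV_{m-1,\vgu_a}$ is actually used.
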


%Item (4) of \cref{thm:main} provides an equivalent definition of a vector subdivision scheme in \cref{def:vsd}.
We shall discuss  in \cref{sec:hsd} how to obtain such a set $\PB\subseteq \PV_{m-1,\vgu_a}$ in \cref{thm:main}.
Item (2) of \cref{thm:main} gives us a way of computing $\phi$ and its derivatives through vector subdivision schemes but without any convergence rate. Fast convergent vector subdivision schemes are important in computational mathematics. For this purpose, we shall prove the following result in \cref{sec:proof}.

\begin{theorem}\label{thm:fastconverg}
Let $a\in \lrs{0}{r}{r}$ such that $m<\sm_\infty(a)\le m+1$ for some $m\in \NN$. Then \eqref{cond:a} must hold and
let $\vgu_a\in \lrs{0}{1}{r}$ be an order $m+1$ matching filter of the matrix mask $a$ as given in \eqref{vgua:value}.
Let $\phi$ be the unique compactly supported refinable vector function satisfying $\wh{\phi}(2\xi)=\wh{a}(\xi)\wh{\phi}(\xi)$ and $\wh{\vgu_a}(0)\wh{\phi}(0)=1$.
Note that $\sm_\infty(a)>m$ implies $\phi\in (\CH{m})^r$.
For $u\in (\lp{0})^r$, define $j:=\ld_m(\wh{\vgu_a}\wh{u})$ and $\beta_j:=\tfrac{[\wh{\vgu_a}\wh{u}]^{(j)}(0)}{i^j j!}$ as in \eqref{phi:deriv}.
If
\be \label{bettermatch}
\wh{\vgu_a}(\xi)\wh{u}(\xi)=\beta_j (i\xi)^j+\bo(|\xi|^{j+s}),\quad \xi \to 0
\quad \mbox{with $s\in \N$ satisfying}\quad
1\le s\le m+1-j,
\ee
then for any $0<\varepsilon<\nu$ with $\nu:=\min(s, \sm_\infty(a)-j)>0$, there is a constant $C>0$ such that
\be \label{phi:fastconverg}
\| [(\sd_a^n (\td I_r))*u](\cdot)2^{jn}-\beta_j \phi^{(j)}(2^{-n}\cdot)
\|_{(\lp{\infty})^{r}}\le C 2^{-(\nu-\varepsilon) n},\qquad \forall\; n\in \N,
\ee
and for every $w_0\in \lrs{}{1}{r}$ and constant $K>0$,
\be \label{vsd:fastconverg}
\max_{k\in \Z\cap[-2^nK,2^nK]}
\| [(\sd_a^n w_0)*u](k)2^{jn}-\beta_j \eta^{(j)}(2^{-n}k)
\|
\le C C_{K,N,w_0} 2^{-(\nu-\varepsilon) n},\qquad \forall\; n\in \N,
\ee
where $\eta:=\sum_{k\in \Z} w_0(k) \phi(\cdot-k)$, $C_{K,N,w_0}:=\sum_{k=-K-N}^{K+N}\|w_0(k)\|<\infty$ and $N\in \N$ is a fixed integer such that both the matrix mask $a$ and the sequence $u$ are supported inside $[-N,N]$.
\end{theorem}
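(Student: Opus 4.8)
The plan is to first reduce the statement for a general input $w_0$ to the model input $w_0=\td I_r$, and then to prove the rate \eqref{phi:fastconverg} by rewriting its left-hand side \emph{exactly} as a single subdivided sequence whose decay in $(\lp{\infty})^r$ is controlled by $\sm_\infty(a)$. For the reduction I would use that $\sd_a$ is local and shift-equivariant, i.e.\ $\sd_a^n(w(\cdot-p))=(\sd_a^n w)(\cdot-2^np)$, which yields $(\sd_a^n w_0)(k)=\sum_{l\in\Z}w_0(l)\,(\sd_a^n(\td I_r))(k-2^nl)$ and hence $[(\sd_a^n w_0)*u](k)=\sum_{l}w_0(l)\,[(\sd_a^n(\td I_r))*u](k-2^nl)$, while $\eta^{(j)}(2^{-n}k)=\sum_l w_0(l)\,\phi^{(j)}(2^{-n}(k-2^nl))$. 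Since $\supp a\subseteq[-N,N]$ gives $\supp(\sd_a^n(\td I_r))\subseteq[-(2^n-1)N,(2^n-1)N]$ by induction, $(\sd_a^n(\td I_r))*u$ is supported in $[-2^nN,2^nN]$ and $\phi,\phi^{(j)}$ in $[-N,N]$, so for $|k|\le 2^nK$ only the indices $|l|\le K+N$ occur and \eqref{vsd:fastconverg} follows from \eqref{phi:fastconverg} with exactly the stated constant $C_{K,N,w_0}$. Also $\sm_\infty(a)>m\ge 0$ forces \eqref{cond:a}, the existence and uniqueness of $\phi$, and $\phi\in(\CH{m})^r$ by the facts recalled in \cref{sec:intro}; in particular $\phi^{(j)}$ is continuous and compactly supported for $0\le j\le m$, so $S_j:=\{\phi^{(j)}(k)\}_{k\in\Z}\in(\lp{0})^r$ is well defined.

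Next I would set up the exact decomposition. From $(\sd_a v)(k)=2\sum_l v(l)a(k-2l)$ one verifies, for any compactly supported $g$, the identity $\sum_{k\in\Z}(\sd_a^n(\td I_r))(k)\,g(2^n\cdot-k)=\cd_a^n g$; evaluating at $2^{-n}k$ yields $[(\sd_a^n(\td I_r))*S_j](k)=(\cd_a^n\phi^{(j)})(2^{-n}k)$. Differentiating the refinement equation $j$ times gives $\phi^{(j)}=2^{j+1}\sum_k a(k)\phi^{(j)}(2\cdot-k)$, i.e.\ $\cd_a\phi^{(j)}=2^{-j}\phi^{(j)}$, hence $\cd_a^n\phi^{(j)}=2^{-jn}\phi^{(j)}$ and therefore $[(\sd_a^n(\td I_r))*S_j](k)=2^{-jn}\phi^{(j)}(2^{-n}k)$ for all $n\in\N$, $k\in\Z$. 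Putting $c:=u-\beta_j S_j\in(\lp{0})^r$, this means
\[
[(\sd_a^n(\td I_r))*u](k)\,2^{jn}-\beta_j\,\phi^{(j)}(2^{-n}k)=2^{jn}\,[(\sd_a^n(\td I_r))*c](k),\qquad n\in\N,\ k\in\Z,
\]
so it suffices to prove $\|(\sd_a^n(\td I_r))*c\|_{(\lp{\infty})^r}\le C\,2^{-(j+\nu-\varepsilon)n}$.

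I would then determine the vanishing order of $c$ and convert it into the rate. We have $\wh{\vgu_a}(\xi)\wh c(\xi)=\wh{\vgu_a}(\xi)\wh u(\xi)-\beta_j\,\wh{\vgu_a}(\xi)\wh{S_j}(\xi)$, where the first term is $\beta_j(i\xi)^j+\bo(|\xi|^{j+s})$ by \eqref{bettermatch}. Because $\sm_\infty(a)>m$ (in particular $a$ has order $m+1$ sum rules with respect to $\vgu_a$), together with $\wh\phi(2\xi)=\wh a(\xi)\wh\phi(\xi)$ and $\wh{\vgu_a}(2\xi)\wh a(\xi)=\wh{\vgu_a}(\xi)+\bo(|\xi|^{m+1})$ one has the standard relations $\wh{\vgu_a}(\xi)\wh\phi(\xi)=1+\bo(|\xi|^{m+1})$ and $\wh{\vgu_a}(\xi)\wh\phi(\xi+2\pi k)=\bo(|\xi|^{m+1})$ for $k\in\Z\bs\{0\}$ (see, e.g., \cite{hanbook}); combining these with $\wh{\phi^{(j)}}(\xi)=(i\xi)^j\wh\phi(\xi)$ and Poisson's formula $\wh{S_j}(\xi)=\sum_{k\in\Z}\wh{\phi^{(j)}}(\xi+2\pi k)$ yields $\wh{\vgu_a}(\xi)\wh{S_j}(\xi)=(i\xi)^j+\bo(|\xi|^{m+1})$, and hence, since $1\le s\le m+1-j$, $\wh{\vgu_a}(\xi)\wh c(\xi)=\bo(|\xi|^{j+s})$, i.e.\ $c\in\PV_{j+s-1,\vgu_a}$. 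The identity $[(\sd_a^n(\td I_r))*S_i](k)=2^{-in}\phi^{(i)}(2^{-n}k)$ and the relation $\wh{\vgu_a}(\xi)\wh{S_i}(\xi)=(i\xi)^i+\bo(|\xi|^{m+1})$ hold for every $0\le i\le m$, so peeling off the leading $\vgu_a$-moments of $c$ one at a time writes $c=\sum_{i=j+s}^{m}\gamma_i S_i+c_m$ with $c_m\in\PV_{m,\vgu_a}$ and constants $\gamma_i$, whence
\[
(\sd_a^n(\td I_r))*c=\sum_{i=j+s}^{m}\gamma_i\,2^{-in}\phi^{(i)}(2^{-n}\cdot)+(\sd_a^n(\td I_r))*c_m .
\]
Using $\sup_k\|\phi^{(i)}(2^{-n}k)\|\le\max_{x\in\R}\|\phi^{(i)}(x)\|<\infty$ and the core property of $\sm_\infty(a)$ — namely $\|(\sd_a^n(\td I_r))*c_m\|_{(\lp{\infty})^r}\le C_\varepsilon\,2^{-(\sm_\infty(a)-\varepsilon)n}$ for $c_m\in\PV_{m,\vgu_a}$, which is essentially the definition of $\sm_\infty(a)$ through the restricted spectral radius of $\sd_a$ (in the $\ell_\infty$-metric) on sequences with vanishing $\vgu_a$-moments (see \cite{han03,hanbook}) — one gets $\|(\sd_a^n(\td I_r))*c\|_{(\lp{\infty})^r}\le C_\varepsilon\,2^{-(\min(j+s,\,\sm_\infty(a))-\varepsilon)n}=C_\varepsilon\,2^{-(j+\nu-\varepsilon)n}$, which is \eqref{phi:fastconverg}.

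The main obstacle is this last estimate: squeezing the sharp decay $2^{-(\sm_\infty(a)-\varepsilon)n}$ out of $(\sd_a^n(\td I_r))*c_m$ for $c_m\in\PV_{m,\vgu_a}$ with loss only the arbitrarily small $\varepsilon$. That rests entirely on the spectral machinery behind $\sm_\infty(a)$ — the subdivision operator restricted to the finite-dimensional shift-invariant subspaces of sequences with vanishing $\vgu_a$-moments, together with the fact that its spectral radius need not be attained after finitely many iterations, which is precisely what forces the $\varepsilon$. One must also be a bit careful with the borderline case $\sm_\infty(a)=m+1$ and with the rigour of the Poisson summation step when $j=m$ and $\phi^{(m)}$ has small Hölder exponent; both are covered by the results already available in \cite{han03,hanbook}.
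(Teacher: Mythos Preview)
Your proof is correct and follows the same skeleton as the paper's: define $S_j=u_{\phi,j}:=\phi^{(j)}|_{\Z}$, use the exact identity $[(\sd_a^n(\td I_r))*S_j](k)=2^{-jn}\phi^{(j)}(2^{-n}k)$ to rewrite the error as $2^{jn}(\sd_a^n(\td I_r))*c$ with $c:=u-\beta_j S_j\in \PV_{j+s-1,\vgu_a}$, and then extract the rate from the spectral behaviour of $\sd_a$ on the vanishing-moment spaces. The reduction to $w_0=\td I_r$ is identical to the paper's (it is literally the argument of (2)$\Rightarrow$(3) in \cref{thm:main}).

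Two places where your route differs slightly from the paper's. First, you obtain $\wh{\vgu_a}(\xi)\wh{S_j}(\xi)=(i\xi)^j+\bo(|\xi|^{m+1})$ via Poisson summation and the identities \eqref{vgu:phi}; the paper instead proves this as \cref{lem:phi:Z} through polynomial reproduction, which sidesteps your stated worry about Poisson when $j=m$ (in fact no issue arises, since only finitely many moments are needed and $S_j$ is finitely supported). Second, for the rate the paper invokes the formula \eqref{rhoj}, namely $\rho_{j+s}(a,\vgu_a)_\infty=\max(2^{-(j+s)},\rho_{m_a}(a,\vgu_a)_\infty)=2^{-j-\nu}$, directly on $c\in\PV_{j+s-1,\vgu_a}$. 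Your further decomposition $c=\sum_{i=j+s}^{m}\gamma_i S_i+c_m$ with $c_m\in\PV_{m,\vgu_a}$ is a nice, more hands-on substitute: it isolates the ``eigenvector'' contributions $2^{-in}\phi^{(i)}(2^{-n}\cdot)$ explicitly and reduces the spectral input to the single fact $\rho_{m+1}(a,\vgu_a)_\infty=2^{-\sm_\infty(a)}$ (which is the base case of \eqref{rhoj} under $m<\sm_\infty(a)\le m+1$, and is exactly what the paper also establishes first). So your approach trades one citation of the general \eqref{rhoj} for a short explicit decomposition plus the base case; both give the same constant $\nu=\min(s,\sm_\infty(a)-j)$.
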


The structure of the paper is as follows.
To further explain our motivations, in \cref{sec:vca} we shall recall the definitions and related results on refinable vector functions and vector cascade algorithms. We shall present the definition of the key quantity $\sm_p(a)$ with $1\le p\le \infty$ in \cref{sec:vca} and prove in \cref{thm:smp} that $\sm_p(a)$ is independent of the choice of a matching filter $\vgu_a$.
Then we shall prove \cref{thm:main,thm:mfilter,thm:fastconverg} in \cref{sec:proof}.
In \cref{sec:hsd}, we shall explain how our results in this paper strengthen and improve known results on Lagrange, Hermite, and generalized Hermite subdivision schemes.
In \cref{sec:ex}, we provide a few examples of various types of vector subdivision schemes to illustrate the results in this paper and to confirm the theoretical convergence rates in \cref{thm:fastconverg} of vector subdivision schemes.

\section{Refinable Vector Functions and Vector Cascade Algorithms}
\label{sec:vca}

In order to further justify \cref{def:vsd} for vector subdivision schemes and to prove our main results in \cref{thm:mfilter,thm:main,thm:fastconverg} on vector subdivision schemes,
in this section we shall recall some related results on refinable vector functions and vector cascade algorithms first. These results also motivate us to define vector subdivision schemes in \cref{def:vsd} from the perspectives of refinable vector functions and vector cascade algorithms.

Because a vector subdivision scheme is iterative in nature, its basis vector function in any meaningful vector subdivision scheme should be a refinable vector function. Therefore, it appears very natural for us to define a vector subdivision scheme according to the properties of such a refinable vector function.
Let us first recall a result from \cite[Proposition~5.6.2]{hanbook} (also see \cite[Propositions~3.1 and 3.2]{han03}) %\cite[Theorem~2.4]{han01}, and \cite{cjr02})
on refinable vector functions. The following result on refinable vector functions is a special case of \cite[Proposition~5.6.2]{hanbook}.

\begin{prop}\label{prop:phi} (\cite[Proposition~5.6.2]{hanbook})
Let $r\in \N$, $m\in \NN$ and $a\in \lrs{0}{r}{r}$. If $\phi=[\phi_1,\ldots,\phi_r]^\tp$ is a compactly supported refinable vector function in $(\CH{m})^r$ satisfying $\wh{\phi}(2\xi)=\wh{a}(\xi)\wh{\phi}(\xi)$
and $\mspan\{\wh{\phi}(2\pi k) \setsp k\in \Z\}=\C^r$, then $\wh{\phi}(0)\ne 0$ and all the following statements hold:
\begin{enumerate}
\item[(1)] $\wh{a}(0):=\sum_{k\in \Z} a(k)$ must satisfy \eqref{cond:a}. Consequently, all the values $\{\wh{\vgu_a}^{(j)}(0)\}_{j=0}^m$ in \eqref{mfilter} are uniquely determined by
    $\wh{\vgu_a}(0)\wh{\phi}(0)=1$ and
    the recursive formula in \eqref{vgua:value}.

\item[(2)] For every polynomial $\pp\in \PL_m$ (i.e., $\pp$ is a polynomial of degree at most $m$), the following two identities hold: $\pp=\sum_{k\in \Z} (\pp*\vgu_a)(k)\phi(\cdot-k)$, where $\pp*\vgu_a:=\sum_{k\in \Z} \pp(\cdot-k) \vgu_a(k)$, and
\be \label{vgu:phi}
\wh{\vgu_a}(\xi)\wh{\phi}(\xi+2\pi k)=\td(k) +\bo(|\xi|^{m+1}),\qquad \xi \to0,\; \forall\; k\in \Z.
\ee
\end{enumerate}
If in addition $\mspan\{\wh{\phi}(\pi+2\pi k) \setsp k\in \Z\}=\C^r$, then
\begin{enumerate}
\item[(3)] the matrix mask $a$ must have order $m+1$ sum rules with a matching filter $\vgu_a\in \lrs{0}{1}{r}$, i.e., $\wh{\vgu_a}(0)\ne 0$ and
\be \label{sr}
\wh{\vgu_a}(2\xi)\wh{a}(\xi)= \wh{\vgu_a}(\xi)+\bo(|\xi|^{m+1})\quad \mbox{and}\quad
\wh{\vgu_a}(2\xi)\wh{a}(\xi+\pi)=\bo(|\xi|^{m+1}),\quad \xi\to 0.
\ee
\end{enumerate}
\end{prop}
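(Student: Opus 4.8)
I would derive \cref{prop:phi} entirely from three ingredients --- the refinement equation, the smoothness $\phi\in(\CH{m})^r$, and the stability of the shifts of $\phi$ --- in three stages: first the elementary facts, then item~(2), which is the heart, and finally items~(1) and~(3), which drop out of~(2).

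\emph{Elementary facts.} Since $\phi$ is compactly supported, $\wh\phi$ is entire; since in addition $\phi\in(\CH{m})^r$, the function $\wh{\phi^{(m)}}(\xi)=(i\xi)^m\wh\phi(\xi)$ is continuous and tends to $0$ at infinity, so $|\xi|^m|\wh\phi(\xi)|\to0$. Iterating $\wh\phi(2\xi)=\wh{a}(\xi)\wh\phi(\xi)$ gives $\wh\phi(\xi)=\left(\prod_{j=1}^{n}\wh{a}(2^{-j}\xi)\right)\wh\phi(2^{-n}\xi)$ for every $n$, so $\wh\phi(0)=0$ would force $\wh\phi\equiv0$ and $\phi=0$, contradicting $\mspan\{\wh\phi(2\pi k)\setsp k\in\Z\}=\C^r$. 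Hence $\wh\phi(0)\ne0$, and setting $\xi=0$ in the refinement equation gives $\wh{a}(0)\wh\phi(0)=\wh\phi(0)$, so $1$ is an eigenvalue of $\wh{a}(0)$ with right eigenvector $\wh\phi(0)$.

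\emph{Item~(2).} The key is that the principal shift-invariant space generated by $\phi$ has accuracy $m+1$, i.e.\ it contains $\PL_m$. This is the step where the hypotheses genuinely interact: a generic $C^m$ function with stable shifts need not reproduce a single nonconstant polynomial, but a \emph{refinable} vector function in $(\CH{m})^r$ with stable shifts does --- this is exactly the content of the refinable-function and quasi-interpolation theory of \cite[Chapters~4--5]{hanbook} on which the paper rests (it is equivalent to $\sm_\infty(a)>m$ and should be invoked as a black box here). Granting $\PL_m\subseteq S(\phi)$, the Strang--Fix theory for compactly supported vectors with stable shifts converts it into \eqref{vgu:phi}: there is $\vgu_a\in\lrs{0}{1}{r}$ with $\wh{\vgu_a}(0)\wh\phi(0)=1$ and $\wh{\vgu_a}(\xi)\wh\phi(\xi+2\pi k)=\td(k)+\bo(|\xi|^{m+1})$ as $\xi\to0$ for all $k\in\Z$, and the associated quasi-interpolant reproduces $\PL_m$ exactly, which is the identity $\pp=\sum_{k\in\Z}(\pp*\vgu_a)(k)\phi(\cdot-k)$. \textbf{I expect this passage --- extracting accuracy $m+1$ from the bare $C^m$-smoothness of a stable refinable $\phi$ --- to be the main obstacle}; everything around it is bookkeeping.

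\emph{Items~(1) and~(3).} Replacing $\xi$ by $2\xi$ in \eqref{vgu:phi} and using $\wh\phi(2\xi+2\pi k)=\wh{a}(\xi+\pi k)\wh\phi(\xi+\pi k)$ together with the $2\pi$-periodicity of $\wh{a}$, I split the index $k$ into even $k=2k'$ and odd $k=2k'+1$ to get $[\wh{\vgu_a}(2\xi)\wh{a}(\xi)]\wh\phi(\xi+2\pi k')=\td(k')+\bo(|\xi|^{m+1})$ and $[\wh{\vgu_a}(2\xi)\wh{a}(\xi+\pi)]\wh\phi(\xi+\pi+2\pi k')=\bo(|\xi|^{m+1})$ for all $k'\in\Z$. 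Subtracting \eqref{vgu:phi} at $\xi$ from the first identity kills its right-hand side, so $[\wh{\vgu_a}(2\xi)\wh{a}(\xi)-\wh{\vgu_a}(\xi)]\wh\phi(\xi+2\pi k')=\bo(|\xi|^{m+1})$; a short induction on Taylor coefficients --- once the lower ones vanish, the order-$\ell$ coefficient of the row vector $\wh{\vgu_a}(2\xi)\wh{a}(\xi)-\wh{\vgu_a}(\xi)$ annihilates every $\wh\phi(2\pi k')$, hence vanishes, by $\mspan\{\wh\phi(2\pi k')\setsp k'\in\Z\}=\C^r$ --- yields the first identity in \eqref{sr}. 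The same induction applied to the odd part, now using the \emph{extra} hypothesis $\mspan\{\wh\phi(\pi+2\pi k')\setsp k'\in\Z\}=\C^r$, gives the second identity in \eqref{sr}; together with $\wh{\vgu_a}(0)\ne0$ (which follows from $\wh{\vgu_a}(0)\wh\phi(0)=1$) this is item~(3). For item~(1), evaluating the first identity in \eqref{sr} at $\xi=0$ gives $\wh{\vgu_a}(0)\wh{a}(0)=\wh{\vgu_a}(0)$, so $\wh{\vgu_a}(0)$ is a left $1$-eigenvector pairing nontrivially with $\wh\phi(0)$; the finer assertions that $1$ is a \emph{simple} eigenvalue and that every other eigenvalue of $\wh{a}(0)$ has modulus $<2^{-m}$ are again read off from the $C^m$-refinable machinery (equivalently from $\sm_\infty(a)>m$). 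Once \eqref{cond:a} is in place, $I_r-2^j\wh{a}(0)$ is invertible for $1\le j\le m$ because $2^{-j}\in(2^{-m},1)$ is not an eigenvalue of $\wh{a}(0)$, the vector $\wh{\vgu_a}(0)$ is pinned uniquely by $\wh{\vgu_a}(0)\wh{a}(0)=\wh{\vgu_a}(0)$ and $\wh{\vgu_a}(0)\wh\phi(0)=1$, and differentiating the first identity in \eqref{sr} $\ell$ times at $0$ and solving for $\wh{\vgu_a}^{(\ell)}(0)$, $\ell=1,\dots,m$, reproduces exactly the recursion \eqref{vgua:value}; this completes item~(1).
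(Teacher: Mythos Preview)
The paper does not supply a proof of this proposition at all; it is simply quoted from \cite[Proposition~5.6.2]{hanbook}. So there is no ``paper's own proof'' to compare against, and your proposal must be judged on its own.

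Your deduction of items~(1) and~(3) \emph{from} item~(2) is clean and correct: replacing $\xi$ by $2\xi$ in \eqref{vgu:phi}, splitting into even/odd $k$, and running induction on Taylor coefficients against the span hypotheses at $0$ and at $\pi$ is exactly the right mechanism, and it uses precisely the stated hypotheses (span at $0$ for the first identity in \eqref{sr}, span at $\pi$ for the second).

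The genuine gap is in item~(2) and in the eigenvalue claims of item~(1), and it is the one you yourself flag. You invoke ``stable shifts'' and, equivalently, $\sm_\infty(a)>m$ as a black box to obtain accuracy $m+1$ and the spectral condition \eqref{cond:a}. But the hypothesis of \cref{prop:phi} is \emph{not} stability \eqref{stability}; it is only the weaker span condition $\mspan\{\wh\phi(2\pi k):k\in\Z\}=\C^r$. In the paper's own logical structure this matters: $\sm_\infty(a)>m$ under stability is item~(7)$\Rightarrow$(6) of \cref{thm:vca}, and the last sentence of \cref{thm:vca} then says that items~(1)--(3) of \cref{prop:phi} follow. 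So appealing to $\sm_\infty(a)>m$ here both assumes a strictly stronger hypothesis than you are given and is circular relative to how the paper organizes things. The honest proof of \cref{prop:phi} (as in \cite{hanbook}) works directly from refinability, $\phi\in(\CH{m})^r$, and the span condition at $0$: one uses Poisson summation on the compactly supported $\phi^{(j)}$ to relate $\sum_k(2\pi ik)^j\wh\phi(2\pi k)$ to $\phi^{(j)}|_\Z$, notes that $\phi^{(j)}|_\Z$ is a $2^{-j}$-eigenvector of the transition operator $\tz_a$ (differentiate the refinement equation and restrict to $\Z$), and from these ingredients extracts both \eqref{cond:a} and the reproduction identity without ever touching $\sm_\infty(a)$. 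Until you replace the black box by an argument of this kind, the proof is incomplete under the stated hypotheses.
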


It is important to notice that \eqref{sr} for sum rules and \eqref{mfilter} for a matching filter  only depend on
the values $\{\wh{\vgu_a}^{(j)}(0)\}_{j=0}^m$ instead of $\vgu_a\in \lrs{0}{1}{r}$ itself. Under the condition in \eqref{cond:a}, using the Leibniz differentiation formula,
one can easily observe that \eqref{vgua:value} is equivalent to \eqref{mfilter}. Therefore, the existence of a matching filter $\vgu_a\in \lrs{0}{1}{r}$ in \cref{def:vsd} is often guaranteed and up to a multiplicative constant,
its values $\{\wh{\vgu_a}^{(j)}(0)\}_{j=0}^m$
are uniquely determined through \eqref{vgua:value}.

Refinable vector functions are often analyzed through vector cascade algorithms, which have been extensively studied, e.g., see \cite{cjr02,han03,hj98,hj06,jrz98} and references therein.
Recall that the refinement operator $\cd_a: (\Lp{p})^r\rightarrow (\Lp{p})^r$ is defined
in \eqref{cd} as $\cd_a f := 2 \sum_{k\in \Z} a(k) f(2\cdot-k)$ for $f\in (\Lp{p})^r$.
To study the convergence of vector cascade algorithms, we now recall a key quantity $\sm_p(a)$ which is defined in \cite{han03,hanbook}. For any sequence $a\in \lrs{0}{r}{r}$ and $v\in \lrs{0}{1}{r}$, we define
\[
\rho_{j}(a,v)_p:=
\sup\Big\{\limsup_{n\to \infty} \|(\sd_a^n (\td I_r))*u \|_{(\lp{p})^r}^{1/n} \setsp u\in \PV_{j-1,v}\Big\},\qquad j\in \NN, 1\le p\le \infty,
\]
where
\be \label{Vj}
\PV_{j-1,v}:=\{u\in (\lp{0})^r \setsp \wh{v}(\xi)\wh{u}(\xi)=\bo(|\xi|^{j}),\; \xi \to 0\},\qquad j\in \NN, v\in \lrs{0}{1}{r}.
\ee
Note that $\PV_{-1,v}:=(\lp{0})^r$ and $\PV_{j-1,v}$ is shift-invariant, that is,
$u(\cdot-k)\in \PV_{j-1,v}$ for all $u\in \PV_{j-1,v}$ and $k\in \Z$.
For any finite subset $\PB_{j-1,v}$ of elements in $\PV_{j-1,v}$ such that
$\mspan\{u(\cdot-k)\setsp k\in \Z, u\in \PB_{j-1,v}\}=\PV_{j-1,v}$ (i.e., $\PB_{j-1,v}$ generates $\PV_{j-1,v}$), it is easy to deduce that
\be \label{rho}
\rho_{j}(a,v)_p=\max\Big\{\limsup_{n\to \infty} \|(\sd_a^n(\td I_r)) *u \|_{(\lp{p})^r}^{1/n} \setsp u\in \PB_{j-1,v}\Big\}, \qquad j\in \NN, 1\le p\le \infty.
\ee

For a matrix mask $a\in \lrs{0}{r}{r}$, using the definition of sum rules in \eqref{sr},
we define $\sr(a):=m_a$ with $m_a$ being the \emph{largest} possible nonnegative integer such that the mask $a\in \lrs{0}{r}{r}$ has order $m_a$ sum rules with a matching filter $\vgu_a\in \lrs{0}{1}{r}$ and $\wh{\vgu_a}(0)\ne 0$.
Now we define the quantity $\sm_p(a)$ (see \cite[(4.3)]{han03} or \cite[(5.6.44)]{hanbook}) by
\be \label{sma}
\sm_p(a):=\frac{1}{p}-\log_2 \rho_{m_a}(a,\vgu_a)_p\quad \mbox{with}\quad m_a:=\sr(a),\qquad 1\le p\le \infty.
\ee
We shall prove in \cref{thm:smp} that the definition of $\sm_p(a)$ in \eqref{sma} is independent of the choice of a matching filter $\vgu_a$.
For $m\in \NN$ and $1\le p\le \infty$, the $L_p$ Sobolev space $\SL{m}{p}$ consists of all functions $f\in \Lp{p}$ such that all $f,\ldots, f^{(m-1)}$ are absolutely continuous functions in $\Lp{p}$ and $f^{(m)}\in L_p(\R)$. The norm on $\SL{m}{p}$ is defined to be $\|f\|_{\SL{m}{p}}:=\sum_{j=0}^m \|f^{(j)}\|_{\Lp{p}}<\infty$ for $f\in \SL{m}{p}$. For $0\le \tau\le 1$, we say that $f\in \mbox{Lip}(\tau, L_p)$ if there exists a positive constant $C$ such that $\|f-f(\cdot-t)\|_{\Lp{p}} \le C|t|^\tau$ for all $t\in \R$. For a function $f\in \Lp{p}$, we define $\sm_p(f)$ to be the supreme of $m+\tau$ with $m\in \NN$ and $0\le \tau<1$ such that $f\in \SL{m}{p}$ and $f^{(m)}\in \mbox{Lip}(\tau, L_p)$.

Now we recall the result on convergence of vector cascade algorithms in \cite{han03,hanbook} and smoothness of refinable vector functions.

\begin{theorem}\label{thm:vca}
(\cite[Theorem~4.3]{han03} or \cite[Theorem~5.6.11]{hanbook})
Let $r\in \N$, $m\in \NN$ and $1\le p\le \infty$. Let $a\in \lrs{0}{r}{r}$
and $\vgu_a\in \lrs{0}{1}{r}$ be an order $m+1$ matching filter of the mask $a$ satisfying \eqref{mfilter}. Let $\phi=[\phi_1,\ldots,\phi_r]^\tp$ be a vector of compactly supported distributions satisfying $\wh{\phi}(2\xi)=\wh{a}(\xi)\wh{\phi}(\xi)$ and $\wh{\vgu_a}(0)\wh{\phi}(0)=1$.
Then the following statements are equivalent to each other:
\begin{enumerate}
\item[(1)] The vector cascade algorithm with mask $a$ and an order $m+1$ matching filter $\vgu_a$ of the mask $a$ is $\SL{m}{p}$ convergent, i.e.,  $\{\cd_a^n f\}_{n=1}^\infty$ is a Cauchy sequence in $(\SL{m}{p})^r$ for every compactly supported vector function $f\in (\SL{m}{p})^r$ satisfying \eqref{initialf}.
\item[(2)] For one vector function $f \in (\SL{m}{p})^r$ (require $f\in (\CH{m})^r$ if $p=\infty$) such that $f$ satisfies \eqref{initialf} and the integer shifts of $f$ are stable, $\{\cd_a^n f\}_{n=1}^\infty$ is a Cauchy sequence in $(\SL{m}{p})^r$.
\item[(3)] The refinable vector function $\phi\in (\SL{m}{p})^r$ (require $\phi\in (\CH{m})^r$ if $p=\infty$) and
$\lim_{n\to \infty} \|\cd_a^n f-\phi\|_{(\SL{m}{p})^r}=0$ for every compactly supported vector function $f\in (\SL{m}{p})^r$ satisfying \eqref{initialf}.

\item[(4)] $\limsup_{n\to \infty} 2^{n(m-1/p)}\|(\sd_a^n (\td I_r))*u\|_{(\lp{p})^r}=0$ for all $u\in \PV_{m,\vgu_a}$.

\item[(5)] $\rho_{m+1}(a,\vgu_a)_p<2^{1/p-m}$.

\item[(6)] $\sm_p(a)>m$.
\end{enumerate}
Moreover, all the above items (1)--(6) must hold and $\sm_p(\phi)=\sm_p(a)$ (\cite[Corollary~5.1]{han03} or \cite[Corollary~5.6.12]{hanbook}) if
\begin{enumerate}
\item[(7)] the refinable vector function $\phi\in (\SL{m}{p})^r$ (require $\phi\in (\CH{m})^r$ if $p=\infty$) and the integer shifts of $\phi$ are stable (i.e., \eqref{stability} holds).
\end{enumerate}
Consequently, any of the above items (1)--(7) implies that $\sm_p(\phi)\ge \sm_p(a)$, $\sr(a)\ge \sm_p(a)$ and all items (1)--(3) of \cref{prop:phi} hold.
\end{theorem}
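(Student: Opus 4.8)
The plan is to pass to the Fourier side via the identities $\wh{\cd_a^n f}(\xi)=\big(\prod_{j=1}^{n}\wh{a}(2^{-j}\xi)\big)\wh{f}(2^{-n}\xi)$ and $\wh{\sd_a^n(\td I_r)}(\xi)=2^{n}\prod_{j=0}^{n-1}\wh{a}(2^{j}\xi)$, which show that, after the dyadic rescaling, the matrix sequence $\sd_a^n(\td I_r)$ is precisely the level-$n$ coefficient array of the cascade iterate: for a fixed compactly supported vector generator $g=[g_1,\ldots,g_r]^\tp$ with stable integer shifts and sufficient smoothness (built from high-order cardinal B-splines), writing $f=\sum_{k}c(k)\,g(\cdot-k)$, one has $\cd_a^{n}f=\sum_{k}\big[(\sd_a^n(\td I_r))*c\big](k)\,g(2^{n}\cdot-k)$ up to normalization, and the telescoping differences $\cd_a^{n+1}f-\cd_a^{n}f=\cd_a^{n}(\cd_a f-f)$ are treated the same way. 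With this dictionary the easy implications are $(3)\Rightarrow(1)$ (a convergent sequence is Cauchy) and $(1)\Rightarrow(2)$ (a stable admissible $f$ satisfying \eqref{initialf} exists and can be built from B-splines), so the real work is to close the cycle $(2)\Rightarrow(4)\Rightarrow(3)$ and to match $(4)\Leftrightarrow(5)\Leftrightarrow(6)$; the last block of assertions, $(7)\Rightarrow(1)$--$(6)$ and $\sm_p(\phi)=\sm_p(a)$, is handled at the end.

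The core is the pair $(2)\Rightarrow(4)$ and its reverse $(4)\Rightarrow(3)$. Here the order $m+1$ sum rules \eqref{sr} together with \cref{prop:phi} are used to show that the shift-invariant space generated by $g$ reproduces $\PL_m$ exactly through the matching filter $\vgu_a$, so that the ``high-order error'' of an element is governed by coefficient sequences lying in $\PV_{m,\vgu_a}$; combined with stability of the shifts of $g$, this makes the rescaled $(\SL{m}{p})^r$-norm (the $(\CH{m})^r$-norm when $p=\infty$) of a cascade iterate two-sidedly comparable to the $(\lp{p})^r$-norm of the associated coefficient sequence. Running the comparison forward: if $\{\cd_a^n f\}$ is Cauchy for a stable admissible $f$, then feeding an arbitrary $u\in\PV_{m,\vgu_a}$ through the dictionary yields $\limsup_{n\to\infty}2^{n(m-1/p)}\|(\sd_a^n(\td I_r))*u\|_{(\lp{p})^r}=0$, which is $(4)$. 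Running it backward: given $(4)$, expand $\cd_a^{n+\ell}f-\cd_a^{n}f$ as a telescoping sum of $\cd_a^{j}(\cd_a f-f)$; the sum rules and \eqref{initialf} place the relevant coefficient sequences in $\PV_{m,\vgu_a}$, so $(4)$ forces geometric-type decay of each summand, whence $\{\cd_a^n f\}$ is Cauchy in $(\SL{m}{p})^r$ for every admissible $f$, its limit is a $\cd_a$-fixed point with normalization $\wh{\vgu_a}(0)\wh{\phi}(0)=1$, i.e.\ $\phi$, so $\phi\in(\SL{m}{p})^r$ (in $(\CH{m})^r$ when $p=\infty$) and the eigenvalue condition \eqref{cond:a} on $\wh{a}(0)$ falls out along the way. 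I expect this function-norm $\leftrightarrow$ sequence-norm passage to be the main obstacle: it rests on the Strang--Fix/sum-rule machinery for exact reproduction of $\PL_m$ together with the two-sided Riesz (stability) estimates, and the endpoint $p=\infty$ has to be handled separately in order to upgrade $\lp{\infty}$ decay of coefficients into uniform convergence of the functions and their first $m$ derivatives.

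The equivalence $(4)\Leftrightarrow(5)\Leftrightarrow(6)$ is then largely bookkeeping. By \eqref{rho}, $\rho_{m+1}(a,\vgu_a)_p=\max\{\limsup_{n\to\infty}\|(\sd_a^n(\td I_r))*u\|_{(\lp{p})^r}^{1/n}\setsp u\in\PB_{m,\vgu_a}\}$ for a finite generating set $\PB_{m,\vgu_a}$ of $\PV_{m,\vgu_a}$, so the elementary fact that $\limsup_{n}2^{n\theta}\|x_n\|=0$ iff $\limsup_{n}\|x_n\|^{1/n}<2^{-\theta}$ gives $(4)\Leftrightarrow(5)$ with $\theta=m-1/p$. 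For $(5)\Leftrightarrow(6)$ one relates $\rho_{m+1}(a,\vgu_a)_p$ to $\rho_{m_a}(a,\vgu_a)_p$ with $m_a=\sr(a)$ in the definition \eqref{sma} of $\sm_p(a)$: convergence forces $\sr(a)\ge\sm_p(a)>m$ (the inequality $\sr(a)\ge\sm_p(a)$ itself being part of what one proves, through the maximality of $\sr(a)$), so $m_a\ge m+1$, and the order $m_a$ sum rules control the $\sd_a$-action on the finitely many polynomial-type orders between $m+1$ and $m_a$, reducing $(5)$ to $(6)$; here one also invokes the independence of $\sm_p(a)$ from the choice of matching filter established in \cref{thm:smp}. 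Finally, $(7)\Rightarrow(1)$--$(6)$ follows by choosing $f=\phi$: by \cref{prop:phi} a refinable $\phi\in(\SL{m}{p})^r$ with stable integer shifts satisfies \eqref{initialf}, so the constant cascade sequence $\cd_a^n\phi=\phi$ is trivially Cauchy, which is $(2)$; and $\sm_p(\phi)=\sm_p(a)$ is obtained by matching the $L_p$-smoothness exponent of $\phi$ with the same decay rate $\rho_{m_a}(a,\vgu_a)_p$ that defines $\sm_p(a)$ in \eqref{sma}, where stability of the shifts of $\phi$ is exactly what turns the a priori inequality into an equality.
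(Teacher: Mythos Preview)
The paper does not prove \cref{thm:vca}; it is quoted as a known result from \cite[Theorem~4.3]{han03} and \cite[Theorem~5.6.11]{hanbook} and then used as a black box in the proofs of \cref{thm:smp}, \cref{thm:main} and \cref{thm:fastconverg}. So there is no ``paper's own proof'' to compare against here. Your outline is broadly consonant with how the cited references argue: the dictionary between cascade iterates in function space and subdivision iterates in sequence space, the two-sided norm comparison coming from stability of the generator, and the link between the quantities $\rho_j(a,\vgu_a)_p$ and a joint spectral radius.

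That said, one step in your sketch is not correct as written. For $(4)\Leftrightarrow(5)$ you invoke the ``elementary fact'' that $\limsup_{n}2^{n\theta}\|x_n\|=0$ if and only if $\limsup_{n}\|x_n\|^{1/n}<2^{-\theta}$. The forward implication is false for general sequences (take $\|x_n\|=2^{-n\theta}/n$). For the particular sequences $x_n=\|(\sd_a^n(\td I_r))*u\|_{(\lp{p})^r}$, the implication $(4)\Rightarrow(5)$ is genuinely nontrivial and rests on the identification of $\rho_{m+1}(a,\vgu_a)_p$ with the joint spectral radius of a finite family of matrices built from the mask; this is precisely \cite[Proposition~4.1]{han03} or \cite[Proposition~5.6.9 and Theorem~5.7.5]{hanbook}, and the present paper invokes the same result explicitly when it needs this step in the proof of item~(3) of \cref{thm:mfilter} (see \eqref{sa:u:to0} and the sentence before it). Without that structural input, your $(4)\Rightarrow(5)$ has a gap. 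Similarly, your treatment of $(5)\Leftrightarrow(6)$ assumes $\sr(a)\ge m+1$, which is part of the conclusion rather than a hypothesis; in the cited references this is obtained by showing, again through the spectral-radius description, that $\rho_{m+1}(a,\vgu_a)_p<2^{1/p-m}$ already forces the order $m+1$ sum rules, after which \eqref{rhoj} closes the loop.
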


We now make some remarks about \cref{thm:vca}.
For $p=\infty$, we point out that the initial vector functions $f$ in items (1) and (3) are only required to belong to $(\SL{m}{\infty})^r$ instead of the smaller space $(\CH{m})^r$, even though the limit function $\phi=\lim_{n\to \infty} \cd_a^n f$ belongs to $(\CH{m})^r$. By \cref{prop:phi}, item (7) of \cref{thm:vca} implies that $\phi$ satisfies \eqref{vgu:phi} and hence item (2) of \cref{thm:vca} trivially holds with $f=\phi$.
The quantity $\sm_p(a)$ can be equivalently expressed through the joint spectral radius and
then $\sm_p(a)$ can be effectively estimated using
tools from joint spectral radius (e.g., see \cite{cjr02,hj98,han03,jrz98} and \cite[Section~5.7]{hanbook}).
Moreover,  the following inequalities hold:
\[
\sm_q(a) \le \sm_p(a)\le \tfrac{1}{p}-\tfrac{1}{q}+\sm_q(a),\qquad
\forall\; 1\le p\le q\le \infty.
\]
In particular, $\sm_2(a)-1/2\le \sm_\infty(a)\le \sm_2(a)$ and $\sm_2(a)$ can be computed by finding the eigenvalues of a \emph{finite} matrix, see \cite[Theorem~1.1]{jj03}, \cite[Theorem~7.1]{han03}, or \cite[Theorem~5.8.4]{hanbook} for details.
In addition, factorization of matrix masks is helpful for estimating $\sm_p(a)$, e.g., see \cite[Proposition~7.2]{han03}, \cite[Theorem~5.8.3 and Corollary~5.8.5]{hanbook} and \cite[Theorem~4.3]{han20} for details.
If the matrix mask $a$ has order $m_a$ sum rules with a matching filter $\vgu_a\in \lrs{0}{1}{r}$, using the joint spectral radius, then \cite[Theorem~5.7.6]{hanbook} shows:
\be \label{rhoj}
\rho_j(a,\vgu_a)_p=\max(2^{1/p-j}, \rho_{m_a}(a,\vgu_a)_p),\qquad \forall\, j=0,\ldots, m_a-1.
\ee
Suppose that $\sm_p(a)>m$ for some $m\in \NN$, that is,
$\rho_{m_a}(a,\vgu_a)_p=2^{1/p-\sm_p(a)}<2^{1/p-m}$ with $m_a:=\sr(a)$. It follows from \eqref{rhoj} that
$\rho_j(a,\vgu_a)_p<2^{1/p-m}$
for all $m<j\le \sr(a)$. Therefore, item (5) of \cref{thm:vca} can be replaced by $\rho_j(a,\vgu_a)_p<2^{1/p-m}$ as long as $m<j\le \sr(a)$.

Because a matching filter $\vgu_a$ of a matrix mask $a\in \lrs{0}{r}{r}$ may not be unique,
we now prove that the quantity $\sm_p(a)$ is well defined and is independent of the choice $\vgu_a$
in the following \cref{thm:smp}.
%We shall give a detailed proof for \cref{thm:smp}, largely because this technical result has never been explicitly proved in the literature.
To avoid confusion, we shall use the notation $\sm_p(a,\vgu_a)$ instead of $\sm_p(a)$ to explicitly emphasize its dependence on a matching filter $\vgu_a$.

\begin{theorem}\label{thm:smp}
Let $a\in \lrs{0}{r}{r}$ such that $a$ has order $\sr(a)$ sum rules with an order $\sr(a)$ matching filter $\vgu_a\in \lrs{0}{1}{r}$. Then $\sm_p(a,\vgu_a):=\frac{1}{p}-\log_2 \rho_{\sr(a)}(a,\vgu_a)_p$ in \eqref{sma}
is independent of the choice of the matching filter $\vgu_a\in \lrs{0}{1}{r}$ and in fact the following strengthened claim holds:
\be \label{rho:sm}
\rho_k(a,v)_p=2^{\frac{1}{p}-\sm_p(a,\vgu_a)},
\quad \mbox{or equivalently},\quad
\sm_p(a,\vgu_a)=\tfrac{1}{p}-\log_2 \rho_k(a,v)_p
\ee
for all $v\in \lrs{0}{1}{r}$ with $\wh{v}(0)\ne 0$ and $k\in \NN$ satisfying $\sm_p(a,\vgu_a)\le k \le \sr(a)$ such that the matrix mask $a$ has order $k$ sum rules with the matching filter $v$.
\end{theorem}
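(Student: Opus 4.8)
The plan is to establish the strengthened claim \eqref{rho:sm} directly; the independence of $\sm_p(a,\vgu_a)$ from $\vgu_a$ is then its special case $k=\sr(a)$, applied with a competing order-$\sr(a)$ matching filter $v$ in place of $\vgu_a$. Throughout write $m_a:=\sr(a)$ and $s:=\sm_p(a,\vgu_a)=\tfrac{1}{p}-\log_2\rho_{m_a}(a,\vgu_a)_p$, so that the goal becomes $\rho_k(a,v)_p=\rho_{m_a}(a,\vgu_a)_p$ for every admissible pair $(v,k)$.

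\emph{The reference filter.} I would first verify \eqref{rho:sm} for $v=\vgu_a$, i.e. that $\rho_j(a,\vgu_a)_p=2^{1/p-s}$ whenever $s\le j\le m_a$. For $j=m_a$ this is the definition \eqref{sma}, and for $j<m_a$ the identity \eqref{rhoj} gives $\rho_j(a,\vgu_a)_p=\max\{2^{1/p-j},\rho_{m_a}(a,\vgu_a)_p\}$; since $j\ge s$ forces $2^{1/p-j}\le 2^{1/p-s}=\rho_{m_a}(a,\vgu_a)_p$, this maximum equals $2^{1/p-s}$ (here $s\le m_a$, so $k=m_a$, which is what the well-definedness needs, is admissible). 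Next, fix an admissible pair $(v,k)$ with $k\le m_a$. As $\vgu_a$ has order-$m_a$ sum rules it is in particular an order-$k$ matching filter of $a$ with order-$k$ sum rules, so it suffices to prove that any two order-$k$ matching filters $v_1,v_2$ of $a$ possessing order-$k$ sum rules satisfy $\rho_k(a,v_1)_p=\rho_k(a,v_2)_p$; combined with the reference case this gives $\rho_k(a,v)_p=\rho_k(a,\vgu_a)_p=2^{1/p-s}$.

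\emph{The two inequalities.} The workhorse is the elementary bound, read off from \eqref{rhoj} and the definition of $\rho_j(a,\vgu_a)_p$: for \emph{every} $u\in(\lp{0})^r$,
\[
\limsup_{n\to\infty}\|(\sd_a^n (\td I_r))*u\|_{(\lp{p})^r}^{1/n}\ \le\ \max\{2^{1/p-\ell},\,\rho_{m_a}(a,\vgu_a)_p\},\qquad \ell:=\ld_{m_a}(\wh{\vgu_a}\wh{u}),
\]
because $u\in\PV_{\ell-1,\vgu_a}$. Hence $\rho_k(a,v)_p\le\max\{2^{1/p-L},\,2^{1/p-s}\}$ with $L:=\min_{u\in\PV_{k-1,v}}\ld_{m_a}(\wh{\vgu_a}\wh{u})$, so the upper bound $\rho_k(a,v)_p\le 2^{1/p-s}$ reduces to proving $L\ge s$. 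If $s\le0$ this is automatic. If $s>0$, then via \cref{thm:vca} and \cref{prop:phi} the mask satisfies \eqref{cond:a} with $m=m_0:=\lceil s\rceil-1\ge0$; in particular $1$ is a simple eigenvalue of $\wh a(0)$, so $\wh v(0)$ and $\wh{\vgu_a}(0)$ are proportional, and writing $h:=v-\mu\vgu_a$ with $\wh h(0)=0$, the matching relations of $v$ and $\vgu_a$ give $\wh h(2\xi)\wh a(\xi)=\wh h(\xi)+\bo(|\xi|^k)$; an induction on the order of vanishing of $\wh h$, using that $2^{-\ell}$ is not an eigenvalue of $\wh a(0)$ for $1\le\ell\le m_0$ (again from \eqref{cond:a}), forces $\wh h(\xi)=\bo(|\xi|^{m_0+1})$. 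Since $\wh v=\mu\wh{\vgu_a}+\wh h$ and $m_0+1=\lceil s\rceil\ge s$, this turns $\wh v\wh{u}=\bo(|\xi|^k)$ into $\wh{\vgu_a}\wh{u}=\bo(|\xi|^{\min\{k,m_0+1\}})$, so $L\ge\min\{k,m_0+1\}\ge s$. For the matching lower bound $\rho_k(a,v)_p\ge 2^{1/p-s}=\rho_{m_a}(a,\vgu_a)_p$ I would start from a near-maximiser $u^\ast\in\PV_{m_a-1,\vgu_a}$ of $\rho_{m_a}(a,\vgu_a)_p$ and correct it to $u^{\ast\ast}:=u^\ast-w\in\PV_{k-1,v}$: using $\wh v(0)\ne0$ one solves $\wh v\wh{w}=\wh v\wh{u^\ast}+\bo(|\xi|^k)$ with $\wh w$ vanishing to the same order as $\wh v\wh{u^\ast}$ (at least $m_0+1$), and, imposing in addition $\wh{\vgu_a}\wh{w}=\bo(|\xi|^{m_a})$, one keeps $w$ in a vanishing-moment subspace on which the subdivision iterates grow no faster than $\rho_{m_a}(a,\vgu_a)_p$; the growth rate of $u^{\ast\ast}$ should then still attain $\rho_{m_a}(a,\vgu_a)_p$.

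I expect the lower bound — the last step — to be the main obstacle. When the integer shifts of the refinable $\phi$ fail to be stable (equivalently, when $1$ is not a simple eigenvalue of $\wh a(0)$, or, more generally, when $\wh a(0)$ has an eigenvalue $2^{-\ell}$ with $m_0<\ell<k$), the spaces $\PV_{k-1,v}$ and $\PV_{k-1,\vgu_a}$ genuinely differ, so they cannot simply be identified; and when $s$ is not an integer the slowly-growing term $2^{1/p-L}$ no longer reaches $2^{1/p-s}$, so the lower bound must actually produce, inside $\PV_{k-1,v}$, a sequence participating in the intrinsic $\rho_{m_a}(a,\vgu_a)_p$-dynamics. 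Ruling out cancellation between $u^\ast$ and its correction $w$ — i.e. showing the correction does not lower the growth rate — is the delicate point, and this is precisely where the sum-rule structure, encoded in the factorisation $\wh v=\mu\wh{\vgu_a}+\wh h$ with $\wh h$ highly vanishing, has to be used quantitatively.
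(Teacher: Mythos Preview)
Your upper-bound half is essentially the paper's argument: once $\sm_p(a,\vgu_a)>m_0:=\lceil s\rceil-1$, \cref{thm:vca} forces \eqref{cond:a} with $m=m_0$, and then the recursion you describe shows $\wh v^{(j)}(0)=\mu\,\wh{\vgu_a}^{(j)}(0)$ for $j\le m_0$, hence $\PV_{j,v}=\PV_{j,\vgu_a}$ and $\rho_{j+1}(a,v)_p=\rho_{j+1}(a,\vgu_a)_p$ in that range. The paper does exactly this (with $\tilde m\ge m_0$ the largest order to which \eqref{cond:a} holds).

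The gap is the lower bound, and the approach you sketch does not close it. Your correction $u^{**}=u^*-w$ puts both $u^*$ and $w$ in $\PV_{m_a-1,\vgu_a}$, so their difference could have growth exponent far below $\rho_{m_a}(a,\vgu_a)_p$; nothing in the moment constraints you impose on $w$ rules out this cancellation, and in fact for generic near-maximisers $u^*$ there is no reason the corrected $u^{**}$ should still witness the supremum. The paper sidesteps the construction entirely by observing that \eqref{rhoj} applies to $v$ as well: since $a$ has order $k$ sum rules with matching filter $v$, one has
\[
\rho_j(a,v)_p=\max\bigl(2^{1/p-j},\,\rho_k(a,v)_p\bigr),\qquad 0\le j\le k-1.
\]
Choosing any $j$ with $m_0<j\le\min(\tilde m+1,k)$ gives, on the one hand, $\rho_j(a,v)_p=\rho_j(a,\vgu_a)_p=2^{1/p-s}$ from the equality of $\PV$-spaces you already have, and on the other hand the displayed $\max$ identity. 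This \emph{single} equation determines $\rho_k(a,v)_p$ from both sides at once, after a brief case split (if $\rho_k(a,v)_p<2^{1/p-j}$ one invokes \cite[Proposition~5.6.9]{hanbook} to push $\tilde m$ up and repeat). That is the missing idea: use the structural identity \eqref{rhoj} for the second matching filter rather than build a witness by hand.
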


\bp Let $m_a:=\sr(a)$.
To prove \eqref{rho:sm}, we consider two cases $\sm_p(a,\vgu_a)>0$ or $\sm_p(a,\vgu_a)\le 0$.

Case 1: $\sm_p(a,\vgu_a)>0$. Since $\sm_p(a,\vgu_a)>0$, there exists a unique nonnegative integer $m$ such that
$m<\sm_p(a, \vgu_a)\le m+1$, i.e.,
$2^{\frac{1}{p}-m-1}\le \rho_{m_a}(a,\vgu_a)_p<2^{\frac{1}{p}-m}$.
Now it follows from \eqref{rhoj} that
\be \label{rhoj:sm:0}
\rho_j(a,\vgu_a)_p=\rho_{m_a}(a,\vgu_a)_p=
2^{\frac{1}{p}-\sm_p(a,\vgu_a)},\qquad \forall\; j=m+1,\ldots, m_a.
\ee
By \cref{thm:vca} and $\sm_p(a,\vgu_a)>m$, the condition in \eqref{cond:a} on $\wh{a}(0)$ must hold and $m_a=\sr(a)\ge m+1$.
Let $\tilde{m}$ be the largest integer such that $1$ is a simple eigenvalue of $\wh{a}(0)$ and all the other eigenvalues of $\wh{a}(0)$ are less than $2^{-\tilde{m}}$ in modulus. Hence, $\tilde{m}\ge m$ by \eqref{cond:a} and there is a nonzero constant $c$ such that $\wh{v}^{(j)}(0)=c\wh{\vgu_a}^{(j)}(0)$ for all $j=0,\ldots, \min(\tilde{m},m_a-1,k-1)$ and hence $\PV_{j,v}=\PV_{j,\vgu_a}$ for all $j=0,\ldots, \min(\tilde{m},m_a-1,k-1)$.
Because $m<\sm_p(a,\vgu_a)\le k \le m_a$, we have $m+1\le k\le m_a$ and $\min(\tilde{m},m_a-1,k-1)=\min(\tilde{m},k-1)$.
Now $\PV_{j,v}=\PV_{j,\vgu_a}$ for all $j=0,\ldots, \min(\tilde{m},k-1)$ implies
\be \label{rho:3}
\rho_{j}(a,v)_p=\rho_{j}(a,\vgu_a)_p,\qquad \forall\; j=0,\ldots,\min(\tilde{m}+1,k).
\ee
For the case $k\le \tilde{m}+1$, by $m+1\le k\le m_a$, \eqref{rho:3} and \eqref{rhoj:sm:0} together imply \eqref{rho:sm} since $\rho_k(a,v)_p=
\rho_{k}(a,\vgu_a)_p=2^{\frac{1}{p}-\sm_p(a,\vgu_a)}$.
For the case $k>\tilde{m}+1$,
by $\tilde{m}\ge m$ and $k\le m_a$,
we have $m_a\ge k\ge \tilde{m}+2\ge m+2$ and then by $m+1\le \tilde{m}+1\le m_a-1$,
we observe from \eqref{rhoj:sm:0}, \eqref{rho:3} and \eqref{rhoj} that
\be \label{rho:4}
2^{\frac{1}{p}-\sm_p(a,\vgu_a)}=
\rho_{j}(a,\vgu_a)_p
=\rho_{j}(a,v)_p=\max(2^{\frac{1}{p}-j}, \rho_k(a,v)_p),\qquad \forall\, m< j\le \tilde{m}+1.
\ee
If $\rho_k(a,v)_p\ge 2^{\frac{1}{p}-m-1}$, then \eqref{rho:sm} follows directly from \eqref{rho:4} with $j=m+1$.
If $\rho_k(a,v)_p<2^{\frac{1}{p}-m-1}$, then
we conclude from
\cite[Proposition~5.6.9]{hanbook} that $\tilde{m}\ge m+1$ and then \eqref{rho:sm} follows from $\sm_p(a,\vgu_a)\le m+1$ and \eqref{rho:4} with $j=m+2$.

Case 2: $\sm_p(a,\vgu_a)\le 0$.
Using \eqref{rhoj} if $\sr(a)>0$ or using the definition of $\sm_p(a,\vgu_a)$ if $\sr(a)=0$, we always have $\rho_0(a,\vgu_a)_p=
\rho_{m_a}(a,\vgu_a)_p=2^{\frac{1}{p}-\sm_p(a,\vgu_a)}$. Obviously, $\rho_0(a,v)_p=
\rho_0(a,\vgu_a)_p$, because $\rho_{0}(a,v)_p$ is independent of $v$ due to $\PV_{-1,v}=(\lp{0})^r$.
Hence, \eqref{rho:sm} trivially holds for $k=0$.
For $k\ge 1$, using \eqref{rhoj} again, we have
\be \label{rho:5}
2^{\frac{1}{p}-\sm_p(a,\vgu_a)}=
\rho_0(a,\vgu_a)_p=
\rho_{0}(a,v)
=\max(2^{1/p}, \rho_k(a,v)_p).
\ee
If $\rho_k(a,v)_p\ge 2^{1/p}$, then \eqref{rho:sm} follows directly from \eqref{rho:5}.
We now show that $\rho_k(a,v)_p<2^{1/p}$ cannot happen.
Suppose that $\rho_k(a,v)_p<2^{1/p}$. By \cref{thm:vca}, $\sr(a)\ge 1$ and \eqref{cond:a} must hold with $m=0$. Consequently, by \eqref{rhoj} and $\PV_{0,\vgu_a}=\PV_{0,v}$, we deduce from $\rho_k(a,v)_p<2^{1/p}$ that
\[
\rho_1(a,\vgu_a)_p=\rho_1(a,v)_p=
\max(2^{\frac{1}{p}-1}, \rho_k(a, v)_p)<2^{1/p}.
\]
Hence, we must have $\rho_{m_a}(a,\vgu_a)_p \le \rho_1(a,\vgu_a)_p<2^{1/p}$, which implies $\sm_p(a,\vgu_a)=\frac{1}{p}-\log_2 \rho_{m_a}(a,\vgu_a)_p>0$. This contradicts our assumption $\sm_a(a,\vgu_a)\le 0$.

This completes the proof of \eqref{rho:sm}.
In particular, \eqref{rho:sm} implies that $\sm_p(a)$ is well defined and is independent of the choice of a matching filter $\vgu_a$.
\ep

\section{Proofs of \cref{thm:main,thm:fastconverg,thm:mfilter}}
\label{sec:proof}

Built on the results for vector cascade algorithms and refinable vector functions stated in \cref{sec:vca}, we now prove \cref{thm:main,thm:fastconverg,thm:mfilter} on vector subdivision schemes.

Because \cref{thm:mfilter} is used in our proof of \cref{thm:main}, we first prove \cref{thm:mfilter}, which explains the scaling factor in \cref{def:vsd} for defining a vector subdivision scheme of an arbitrary matrix mask.

\begin{proof}[Proof of \cref{thm:mfilter}]
Because both $a\in \lrs{0}{r}{r}$ and $u\in (\lp{0})^r$ have finite support, there exists a positive integer $N\in \N$ such that both $a$ and $u$ are supported inside $[-N,N]$. Define $a_n$ and $u_n$ by $\wh{a_n}(\xi):=\wh{a}(2^{n-1}\xi)\wh{a}(2^{n-2}\xi)\cdots \wh{a}(2\xi)\wh{a}(\xi)$ and $\wh{u_n}(\xi):=\wh{a_n*u}(\xi)=\wh{a_n}(\xi)\wh{u}(\xi)$. Then we have $\sd_a^n (\td I_r)=2^n a_n$ and $(\sd_a^n (\td I_r))*u=2^n a_n*u=2^n u_n$. Using induction, we can easily prove that $u_n$ is supported inside $[-2^nN,2^nN]$ and therefore, we conclude from \eqref{phi:deriv:u} that $\eta$ must be supported inside $[-N,N]$ and
\eqref{phi:deriv:u} is equivalent to
\be \label{phi:deriv:bn}
\lim_{n\to \infty}
\max_{k\in \Z\cap [-2^nN,2^nN]}
\|u_n(k)2^{(\tau+1) n}-\eta(2^{-n}k)\|=0.
\ee
Since $\vgu_a$ is an order $m+1$ matching filter of the matrix mask $a$, using \eqref{mfilter}, we have
\begin{align*}
\wh{\vgu_a}(2^n\xi)\wh{u_n}(\xi)
&=\wh{\vgu_a}(2^n\xi) \wh{a_n}(\xi)\wh{u}(\xi)
=
\wh{\vgu_a}(2^n\xi) \wh{a}(2^{n-1}\xi)\cdots \wh{a}(2\xi)\wh{a}(\xi)\wh{u}(\xi)\\
&=\wh{\vgu_a}(2^{n-1}\xi) \wh{a}(2^{n-2}\xi)\cdots \wh{a}(2\xi)\wh{a}(\xi)\wh{u}(\xi)+\bo(|\xi|^{m+1})
=\cdots=\wh{\vgu_a}(\xi)\wh{u}(\xi)+\bo(|\xi|^{m+1})
\end{align*}
as $\xi \to 0$. In other words, the above identities are equivalent to saying that
\be \label{vguau:1}
[\wh{\vgu_a}\wh{u}]^{(\ell)}(0)=
[\wh{\vgu_a}(2^n\cdot) \wh{u_n}(\cdot)]^{(\ell)}(0)
=\sum_{s=0}^\ell \frac{\ell!}{s!(\ell-s)!}
\wh{\vgu_a}^{(\ell-s)}(0)
2^{(\ell-s)n} \wh{u_n}^{(s)}(0),\quad
\ell=0,\ldots,m.
\ee
Noting that $\wh{\eta}^{(s)}(0)=\int_\R \eta(x) (-ix)^s dx$,
we now use \eqref{phi:deriv:bn} to
claim that
\be \label{vguau:2}
\lim_{n\to \infty}
2^{(\ell-s)n} \wh{u_n}^{(s)}(0)=\lim_{n\to \infty}2^{(\ell-\tau)n}
\wh{\eta}^{(s)}(0)=\td(\ell-\tau) \wh{\eta}^{(s)}(0),\qquad \forall\, \ell, s\in \NN, \ell \le \tau.
\ee
Because $\eta$ is continuous and is supported inside $[-N,N]$, using the Riemann sum for $\wh{\eta}^{(s)}(0)=\int_\R \eta(x) (-ix)^s dx$, we have
\be \label{eta:to0}
\lim_{n\to \infty} I^{s}_{n}=0
\quad \mbox{with}\quad
I^{s}_{n}:=\left(2^{-n}\sum_{k=-2^nN}^{2^nN}
\eta(2^{-n}k) (-i2^{-n}k)^s\right)-
\int_{\R} \eta(x)(-ix)^s dx.
\ee
Note that $\wh{u_n}^{(s)}(0)=
\sum_{k=-2^nN}^{2^n N}
u_n(k)(-ik)^s$ and $\wh{\eta}^{(s)}(0)=\int_\R \eta(x) (-ix)^s dx$.
Hence, for $s\in \NN$,
\begin{align*}
\left| 2^{(\ell-s)n}\wh{u_n}^{(s)}(0)-2^{(\ell-\tau)n}\wh{\eta}^{(s)}(0)
\right|
&\le 2^{(\ell-s-\tau-1)n}\left| \sum_{k=-2^nN}^{2^nN}
[u_n(k) 2^{(\tau+1)n}-\eta(2^{-n}k)] (-ik)^s \right|+2^{(\ell-\tau)n} |I^s_n|\\
&\le
2^{(\ell-s-\tau-1)n}
\|u_n(\cdot) 2^{(\tau+1)n}-\eta(2^{-n}\cdot)\|_{(\lp{\infty})^r}
\sum_{k=-2^nN}^{2^nN} |k|^s+
2^{(\ell-\tau)n}|I^s_n|\\
&\le C_{N,s} 2^{(\ell-\tau)n} \|u_n(\cdot) 2^{(\tau+1)n}-\eta(2^{-n}\cdot)\|_{(\lp{\infty})^r}
+2^{(\ell-\tau)n}|I^s_n|,
\end{align*}
where $C_{N,s}$ is a positive constant independent of $n$ such that $\sum_{k=-2^nN}^{2^nN} |k|^s\le C_{N,s}2^{(s+1)n}$ for all $n\in \N$. Using \eqref{phi:deriv:bn} and \eqref{eta:to0}, we conclude from the above inequality that \eqref{vguau:2} holds. This proves \eqref{vguau:2}.

Now for $\ell\in \{0,\ldots,m\}$ and $\ell\le \tau$, we deduce from \eqref{vguau:1} and \eqref{vguau:2} that
\[
[\wh{\vgu_a}\wh{u}]^{(\ell)}(0)
=\sum_{s=0}^\ell \frac{\ell!}{s! (\ell-s)!}
\wh{\vgu_a}^{(\ell-s)}(0) \lim_{n\to \infty}
2^{(\ell-s)n} \wh{u_n}^{(s)}(0)
=\lim_{n\to \infty} 2^{(\ell-\tau)n} \sum_{s=0}^\ell \frac{\ell!}{s! (\ell-s)!}
\wh{\vgu_a}^{(\ell-s)}(0) \wh{\eta}^{(s)}(0),
\]
from which we conclude that
\be \label{vgua:u:0}
[\wh{\vgu_a}\wh{u}]^{(\ell)}(0)=
\begin{cases}
0, &\text{if $\ell\in \{0,\ldots,m\}$ and $\ell<\tau$},\\
[\wh{\vgu_a}\wh{\eta}]^{(\ell)}(0), &\text{if $\ell\in \{0,\ldots, m\}$ and $\ell=\tau\in \NN$}.
\end{cases}
\ee
To prove item (1), we next prove the second part of \eqref{limit:eta}.
Because $\eta$ is continuous and is supported inside $[-N,N]$, using the Riemann sum for $\wh{\eta}(\xi)=\int_\R \eta(x) e^{-i\xi x} dx$, we have
\be \label{hateta}
\lim_{n\to \infty} I_n(\xi)=0,\qquad \forall\, \xi\in \R\quad \mbox{with}\quad
I_n(\xi):=\wh{\eta}(\xi)-
2^{-n}\sum_{k=-2^nN}^{2^nN}
\eta(2^{-n}k) e^{-i 2^{-n} k \xi}.
\ee
Since $u_n$ is supported inside $[-2^nN, 2^nN]$, we have $\wh{u_n}(2^{-n}\xi)
=\sum_{k=-2^nN}^{2^nN}
u_n(k) e^{-i 2^{-n} k\xi}$. Consequently,
\begin{align*}
|2^{\tau n} \wh{u_n}(2^{-n}\xi)-
\wh{\eta}(\xi)|
&\le 2^{-n}\left|\sum_{k=-2^nN}^{2^nN}
[u_n(k)2^{(\tau+1) n}-\eta(2^{-n}k)] e^{-i 2^{-n} k\xi}\right|
+|I_n(\xi)|\\
&\le 2^{-n} (2^{n+1}N+1) \| u_n(\cdot) 2^{(\tau+1) n}-\eta(2^{-n}\cdot)\|_{(\lp{\infty})^r}+|I_n(\xi)|.
\end{align*}
Using \eqref{phi:deriv:bn} and \eqref{hateta}, we see from the above inequalities that the second part of \eqref{limit:eta} holds, because
$\left(\prod_{j=1}^{n} \wh{a}(2^{-j}\xi)\right) \wh{u}(2^{-n}\xi)=
\wh{a_n}(2^{-n}\xi)\wh{u}(2^{-n}\xi)=
\wh{u_n}(2^{-n}\xi)$.
From the second part of \eqref{limit:eta}, we have
\begin{align*}
\wh{\eta}(2\xi)
&=
\lim_{n\to \infty} \left[2^{\tau n} \left(\prod_{j=1}^{n} \wh{a}(2^{1-j}\xi)\right) \wh{u}(2^{1-n}\xi)\right]\\
&=\lim_{n\to \infty} 2^{\tau} \wh{a}(\xi) \left[2^{\tau(n-1)} \left(\prod_{j=1}^{n-1} \wh{a}(2^{-j}\xi)\right) \wh{u}(2^{-(n-1)}\xi)\right]=
2^{\tau} \wh{a}(\xi)\wh{\eta}(\xi).
\end{align*}
This proves the first part of \eqref{limit:eta}.
To prove \eqref{phi:deriv:uc},
since $c\in \lp{0}$ and $\wh{c}(0)=\sum_{k\in \Z} c(k)$, we have
\begin{align*}
&\left\| [(\sd_a^n (\td I_r))*(u*c)](\cdot)2^{\tau n}-
\wh{c}(0) \eta(2^{-n}\cdot)
\right\|_{(\lp{\infty})^{r}}
\\ &\qquad
\le \Big\| \sum_{k\in \Z} c(k)\Big([(\sd_a^n (\td I_r))*u](\cdot-k)2^{\tau n}-  \eta(2^{-n}(\cdot-k))\Big)
\Big\|_{(\lp{\infty})^{r}}
\\ &\qquad \qquad +
\Big\| \sum_{k\in \Z} c(k) \Big(\eta(2^{-n}(\cdot-k))-\eta(2^{-n}\cdot)
\Big)\Big\|_{(\CH{})^r}\\
&\qquad \le \|c\|_{\lp{1}}
\| [(\sd_a^n (\td I_r))*u](\cdot)2^{\tau n}- \eta(2^{-n}\cdot)
\|_{(\lp{\infty})^{r}}
%\\ &\qquad\qquad
+ \sum_{k\in \Z} |c(k)| \| \eta(\cdot-2^{-n}k)-\eta(\cdot)
\|_{(\CH{})^r}.
\end{align*}
Since $c\in \lp{0}$ and $\lim_{n\to \infty} \| \eta(\cdot-2^{-n}k)-\eta(\cdot)
\|_{(\CH{})^r}=0$ for all $k\in \Z$, we conclude from \eqref{phi:deriv:u} and the above inequality that \eqref{phi:deriv:uc} holds. This proves all the claims in item (1).

We now prove item (2).
Since $\eta$ has compact support and is not identically zero, $\wh{\eta}$ is an analytic vector function and hence, there must exist $j\in \NN$ such that
\be \label{eta:j:0}
\wh{\eta}(0)=\cdots=\wh{\eta}^{(j-1)}(0)=0,\quad
\wh{\eta}^{(j)}(0)\ne 0,
\quad\mbox{and}\quad \wh{\eta}(\xi)=\frac{\wh{\eta}^{(j)}(0)}{j!} \xi^j+\bo(|\xi|^{j+1}),\quad \xi \to 0.
\ee
From the proved fact $\wh{\eta}(2\xi)=2^\tau \wh{a}(\xi)\wh{\eta}(\xi)$ in \eqref{limit:eta}, we deduce that
\[
2^{j}
\frac{\wh{\eta}^{(j)}(0)}{j!} \xi^j=\wh{\eta}(2\xi)+\bo(|\xi|^{j+1})
=2^\tau \wh{a}(\xi)\wh{\eta}(\xi)+\bo(|\xi|^{j+1})=
2^\tau \wh{a}(0) \frac{\wh{\eta}^{(j)}(0)}{j!} \xi^j+\bo(|\xi|^{j+1}),\quad \xi \to 0.
\]
Because $\wh{\eta}^{(j)}(0)\ne 0$, the above identity forces
\be \label{eta:tau} \wh{a}(0)\wh{\eta}^{(j)}(0)=2^{j-\tau}
\wh{\eta}^{(j)}(0) \ne 0.
\ee
That is, $2^{j-\tau}$ must be an eigenvalue of $\wh{a}(0)$.
Since $\tau\in [0,m]$ and $j\ge 0$, we have $j-\tau\ge -m$ and hence,
$2^{j-\tau}\ge 2^{-m}$. Now we deduce from the condition in \eqref{cond:a} that $j-\tau=0$, i.e.,
$\tau=j$ must be an integer.
Because $1$ is a simple eigenvalue of $\wh{a}(0)$, we conclude from
$\wh{\vgu_a}(0)\wh{a}(0)=\wh{\vgu_a}(0)$ with $\wh{\vgu_a}(0)\ne 0$ and \eqref{eta:tau} with $\tau=j$ that $\wh{\vgu_a}(0)\wh{\eta}^{(j)}(0)\ne 0$. Moreover, we observe from
\eqref{vgua:u:0} and \eqref{eta:j:0} that
\[
[\wh{\vgu_a}\wh{u}]^{(j)}(0)=[\wh{\vgu_a}\wh{\eta}]^{(j)}(0)
=\sum_{\ell=0}^j \frac{j!}{\ell!(j-\ell)!}
\wh{\vgu_a}^{(j-\ell)}(0)\wh{\eta}^{(\ell)}(0)
=\wh{\vgu_a}(0)\wh{\eta}^{(j)}(0) \ne 0.
\]
This proves $\tau=j=\ld_m(\wh{\vgu_a}\wh{u})$ and $\beta_j:=\frac{[\wh{\vgu_a}\wh{u}]^{(j)}(0)}{i^j j!}\ne 0$.

By \eqref{eta:j:0}, we have $\wh{\eta}(0)=\cdots=\wh{\eta}^{(j-1)}(0)=0$. Therefore, we can recursively define vector functions $\eta_\ell(x):=\int_{-\infty}^x \eta_{\ell-1}(t) dt$ for $\ell=1,\ldots,j$ with $\eta_0:=\eta$.
The identities $\wh{\eta}(0)=\cdots=\wh{\eta}^{(j-1)}(0)=0$  guarantee that all $\eta_1,\ldots,\eta_j$ are compactly supported vector functions, $\eta_\ell \in (\CH{\ell})^r$ and
$\eta_\ell^{(\ell)}=\eta$
for all $\ell=0,\ldots,j$. In particular, $\eta_j\in (\CH{j})^r$ and $\eta_j^{(j)}=\eta$.
Consequently, $\wh{\eta}(\xi)=(i\xi)^j \wh{\eta_j}(\xi)$, from which we have $\wh{\eta}^{(j)}(0)=i^j j! \wh{\eta_j}(0)$.
Because $\wh{\eta}(2\xi)=2^j \wh{a}(\xi)\wh{\eta}(\xi)$ and $\wh{\eta}(\xi)=(i\xi)^j\wh{\eta_j}(\xi)$, we deduce that
\[
2^j (i\xi)^j \wh{\eta_j}(2\xi)
=(i2\xi)^j \wh{\eta_j}(2\xi)
=\wh{\eta}(2\xi)=2^j \wh{a}(\xi)\wh{\eta}(\xi)=
2^j (i\xi)^j \wh{a}(\xi)\wh{\eta_j}(\xi),
\]
from which we must have $\wh{\eta_j}(2\xi)= \wh{a}(\xi)\wh{\eta_j}(\xi)$.
By \eqref{cond:a} and the uniqueness of the refinable vector function $\phi$, there must exist $c\in \C$ such that $\eta_j=c\phi$. Hence, $\eta=\eta_j^{(j)}=c\phi^{(j)}$. We now determine the constant $c$. By $\wh{\vgu_a}(0)\wh{\phi}(0)=1$ and $\eta_j=c\phi$, we have
\[
c=c\wh{\vgu_a}(0)\wh{\phi}(0)
=\wh{\vgu_a}(0)\wh{\eta_j}(0)
=\frac{1}{i^j j!} \wh{\vgu_a}(0)\wh{\eta}^{(j)}(0)
=\frac{[\wh{\vgu_a}\wh{u}]^{(j)}}{i^j j!}=\beta_j \ne 0,
\]
where we used  $\wh{\eta}^{(j)}(0)=i^j j! \wh{\eta_j}(0)$ and $[\wh{\vgu_a}\wh{u}]^{(j)}(0)=\wh{\vgu_a}(0)\wh{\eta}^{(j)}(0)\ne 0$.
This proves item (2).

Finally, we prove item (3) using proof by contradiction. Suppose that $\eta$ is identically zero. By $\tau=j$ and $\sd_a^n (\td I_r)=2^n a_n$, \eqref{phi:deriv} becomes
$\lim_{n\to \infty}
2^{(j+1)n} \|a_n*u\|_{(\lp{\infty})^r}=0$.
By \cite[Proposition~4.1]{han03} or
\cite[Proposition~5.6.9 or Theorem~5.7.5]{hanbook}, this limit holds if and only if there exists $\varepsilon>0$ such that
\be \label{sa:u:to0}
\lim_{n\to \infty}
\|[(\sd_a^n(\td I_r))*u](\cdot) 2^{(j+\varepsilon) n}\|_{(\lp{\infty})^r}=
\lim_{n\to \infty}
2^{(j+1+\varepsilon)n} \|a_n*u\|_{(\lp{\infty})^r}=0.
\ee
By item (1), we deduce from \eqref{vgua:u:0} with $\tau:=j+\varepsilon>j$ that $[\wh{\vgu_a}\wh{u}]^{(j)}(0)=0$, a contradiction to our assumption $\beta_j\ne 0$. This proves that $\eta$ cannot be identically zero. By item (1) and $\beta_j\ne 0$, we must have $j=\ld_m(\wh{\vgu_a}\wh{u})$.
By the same argument as in proving item (2), $\wh{\eta}(0)=\wh{\eta}'(0)=\cdots=\wh{\eta}^{(j-1)}(0)=0$. As in the proof of item (2),
we define $\eta_\ell, \ell=0,\ldots,j$. Then we already proved $\eta_j\in (\CH{j})^r$, $\eta=\eta_j^{(j)}$, $\wh{\eta_j}(2\xi)=\wh{a}(\xi)\wh{\eta_j}(\xi)$ and
$i^j j! \wh{\eta_j}(0)=\wh{\eta}^{(j)}(0)\ne 0$. By \eqref{vgua:u:0} and \eqref{eta:j:0}, we also have
\[
i^j j! \wh{\vgu_a}(0)\wh{\eta_j}(0)
=\wh{\vgu_a}(0)\wh{\eta}^{(j)}(0)
=[\wh{\vgu_a}\wh{\eta}]^{(j)}(0)=
[\wh{\vgu_a}\wh{u}]^{(j)}(0)=i^j j! \beta_j\ne 0.
\]
Hence, $\beta_j=\wh{\vgu_a}(0)\wh{\eta_j}(0)$.
Now define $\varphi:=\beta_j^{-1} \eta_j$. Then
$\eta=\eta_j^{(j)}=\beta_j \varphi^{(j)}$, $\wh{\varphi}(2\xi)=\wh{a}(\xi)\wh{\varphi}(\xi)$,
and
$\wh{\vgu_a}(0)\wh{\varphi}(0)=
\beta_j^{-1} \wh{\vgu_a}(0) \wh{\eta_j}(0)=1$.
This completes the proof of item (3).
\end{proof}

To prove \cref{thm:main,thm:fastconverg}, we need the following auxiliary result, which is of interest in itself.

\begin{lemma}\label{lem:phi:Z}
Let $r, m_a\in \N$ and $m\in \NN$ with $m_a\ge m+1$.
Let $\vgu_a\in \lrs{0}{1}{r}$ with $\wh{\vgu_a}(0)\ne 0$ and
$\phi$ be an $r\times 1$ vector of compactly supported functions  satisfying
\be \label{phi:poly}
\wh{\vgu_a}(\xi)\wh{\phi}(\xi+2\pi k)=
\td(k)+\bo(|\xi|^{m_a}),\quad \xi \to 0, \; \forall\; k\in \Z.
\ee
If $\phi\in (\CH{m})^r$,
then the sequences $u_{\phi,j}\in (\lp{0})^r, j=0,\ldots,m$ defined by
\be \label{wj}
u_{\phi,j}:=\phi^{(j)}|_{\Z}, \quad \mbox{that is},\quad
u_{\phi,j}(k):=\phi^{(j)}(k),\qquad \forall\; k\in \Z, j=0,\ldots,m
\ee
must satisfy
\be \label{vgua:wj}
\wh{\vgu_a}(\xi)\wh{u_{\phi,j}}(\xi)=(i\xi)^j+\bo(|\xi|^{m_a}),\qquad \xi \to 0, \; \forall\; j=0,\ldots,m.
\ee
\end{lemma}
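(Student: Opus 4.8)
The plan is to deduce \eqref{vgua:wj} from the polynomial-reproduction property encoded in \eqref{phi:poly}. Note first that \eqref{phi:poly} with $k=0$ already forces $\wh{\vgu_a}(0)\wh{\phi}(0)=1$, and that \eqref{phi:poly} is precisely the order $m_a$ matching (Strang--Fix type) condition for the pair $(\phi,\vgu_a)$. It is well known (this is the polynomial-reproduction part of the argument behind \cref{prop:phi}(2), applied with $m_a-1$ in place of $m$, and uses only the matching of the values $\{\wh{\vgu_a}^{(\ell)}(0)\}_{\ell=0}^{m_a-1}$ together with $\wh{\vgu_a}(0)\wh{\phi}(0)\ne 0$; see also \cite{hanbook}) that \eqref{phi:poly} implies
\[
\pp(x)=\sum_{k\in\Z}(\pp*\vgu_a)(k)\,\phi(x-k),\qquad x\in\R,\qquad \pp*\vgu_a:=\sum_{l\in\Z}\pp(\cdot-l)\vgu_a(l),
\]
for every scalar polynomial $\pp$ of degree at most $m_a-1$, where $\pp*\vgu_a$ is a $1\times r$ vector of polynomials of degree at most $m_a-1$. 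Since $\phi$ has compact support this is a locally finite sum, and since $\phi\in(\CH{m})^r$ with $j\le m$ I may differentiate it $j$ times term by term; evaluating the result at $x=0$ and recalling $u_{\phi,j}=\phi^{(j)}|_{\Z}$ from \eqref{wj} gives
\[
\pp^{(j)}(0)=\sum_{k\in\Z}(\pp*\vgu_a)(k)\,\phi^{(j)}(-k)=\sum_{k\in\Z}(\pp*\vgu_a)(k)\,u_{\phi,j}(-k).
\]

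The key step is to feed in the truncated-exponential polynomial. For $\xi\in\R$ let $\pp_\xi(x):=\sum_{\ell=0}^{m_a-1}\frac{(-ix\xi)^\ell}{\ell!}$, the degree $m_a-1$ Taylor polynomial of $x\mapsto e^{-ix\xi}$, so that $\pp_\xi(x)=e^{-ix\xi}+\bo(|\xi|^{m_a})$ as $\xi\to0$ for each fixed $x$. On one side, since $j\le m\le m_a-1$ one computes directly $\pp_\xi^{(j)}(0)=(-i\xi)^j$ exactly. On the other side, $(\pp_\xi*\vgu_a)(k)=\sum_{l\in\Z}\pp_\xi(k-l)\vgu_a(l)=e^{-ik\xi}\wh{\vgu_a}(-\xi)+\bo(|\xi|^{m_a})$, where the error is a finite sum (over $l\in\supp\vgu_a$) of terms of the form $\xi^{m_a}$ times an entire function of $\xi$. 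Substituting these into the second displayed identity above, and using that $\phi$ has compact support so that only finitely many $k$ contribute, the $\bo(|\xi|^{m_a})$ terms collect into a single $\bo(|\xi|^{m_a})$ and, after the reindexing $k\mapsto -k$, I obtain
\[
(-i\xi)^j=\wh{\vgu_a}(-\xi)\sum_{k\in\Z}\phi^{(j)}(-k)e^{-ik\xi}+\bo(|\xi|^{m_a})=\wh{\vgu_a}(-\xi)\,\wh{u_{\phi,j}}(-\xi)+\bo(|\xi|^{m_a}),\qquad \xi\to0.
\]
Replacing $\xi$ by $-\xi$ (which leaves a $\bo(|\xi|^{m_a})$ term unchanged) turns this into $(i\xi)^j=\wh{\vgu_a}(\xi)\wh{u_{\phi,j}}(\xi)+\bo(|\xi|^{m_a})$, which is exactly \eqref{vgua:wj}.

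The main obstacle is the very first step, namely justifying the polynomial-reproduction identity from \eqref{phi:poly} alone, with no refinability or stability of the shifts of $\phi$ assumed. This is classical: on the Fourier side it amounts to noting that, by \eqref{phi:poly}, each translate $\wh{\phi}(\cdot+2\pi k)$ with $k\ne0$ becomes $\bo(|\xi|^{m_a})$ after multiplication by the $2\pi$-periodic factor $\wh{\vgu_a}$, so the distributional pairing of $\pp*\vgu_a$ (whose symbol, as a distribution, is a combination of derivatives of Dirac deltas of order at most $m_a-1$ located on $2\pi\Z$) against $\wh{\phi}$ kills the $k\ne0$ contributions, while the $k=0$ contribution reproduces $\wh{\pp}$ because of $\wh{\vgu_a}(0)\wh{\phi}(0)=1$ and the matching of $\{\wh{\vgu_a}^{(\ell)}(0)\}_{\ell=0}^{m_a-1}$; I would either cite this from \cite{hanbook} or include the short computation. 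A direct Poisson-summation proof, writing $\wh{u_{\phi,j}}(\xi)=\sum_{k\in\Z}\wh{\phi^{(j)}}(\xi+2\pi k)$ and observing that each $k\ne0$ term is $\bo(|\xi|^{m_a})$ since $\wh{\vgu_a}$ is $2\pi$-periodic with $\wh{\vgu_a}(0)\wh{\phi}(0)=1$, is morally the same, but it requires justifying convergence and term-by-term differentiation of the periodization at the origin when $\phi^{(m)}$ is merely continuous — which is precisely what routing through polynomial reproduction avoids.
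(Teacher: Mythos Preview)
Your proof is correct and follows essentially the same route as the paper: both invoke the polynomial-reproduction identity guaranteed by \eqref{phi:poly} (the paper cites \cite[Theorem~5.5.1]{hanbook} for this), differentiate $j$ times, and then read off the Taylor coefficients of $\wh{\vgu_a}\wh{u_{\phi,j}}$ at the origin. The only cosmetic difference is in the bookkeeping of that last step: the paper feeds in the single monomial $\pp_{m_a-1}(x)=x^{m_a-1}/(m_a-1)!$, restricts to $\Z$, and invokes \cite[Lemma~1.2.1]{hanbook} to expand $\pp_{m_a-1}*(\vgu_a*u_{\phi,j})$ and compare coefficients, whereas you package all the monomials at once via the truncated exponential $\pp_\xi$ and obtain the symbol identity directly.
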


\begin{proof}
By \eqref{phi:poly}, it is known (e.g., see \cite[Theorem~5.5.1]{hanbook}) that
$\sum_{k\in \Z} (\pp*\vgu_a)(k)\phi(\cdot-k)=\pp$ for all polynomials $\pp\in \PL_{m_a-1}$. In particular, we have $\pp_{m_a-1}=\sum_{k\in \Z}(\pp_{m_a-1}*\vgu_a)(k)\phi(\cdot-k)$, where
$\pp_\ell(x):=\frac{x^\ell}{\ell !}$ for $\ell\in \NN$. By $\phi\in (\CH{m})^r$,
for $j=0,\ldots,m$, we have
$\pp_{m_a-j-1}=[\pp_{m_a-1}]^{(j)}=
\sum_{k\in \Z} (\pp_{m_a-1}*\vgu_a)(k)\phi^{(j)}(\cdot-k)$. Consequently, by the definition $u_{\phi,j}=\phi^{(j)}|_{\Z}$, we have
\[
\pp_{m_a-j-1}(\cdot)=\sum_{k\in \Z} (\pp_{m_a-1}*\vgu_a)(k)u_{\phi,j}(\cdot-k)
=(\pp_{m_a-1}*\vgu_a)*u_{\phi,j}
=\pp_{m_a-1}*(\vgu_a*u_{\phi,j}).
\]
Now applying \cite[Lemma~1.2.1]{hanbook} and noting that $\pp_{m_a-1}^{(k)}=\pp_{m_a-k-1}$ for $0\le k<m_a$, we have
\[
\pp_{m_a-j-1}=
\pp_{m_a-1}*(\vgu_a*u_{\phi,j})
=\sum_{k=0}^{m_a-1} \frac{(-i)^k}{k!}
\pp_{m_a-1}^{(k)}(\cdot) [\wh{\vgu_a}\wh{u_{\phi,j}}]^{(k)}(0)
=\sum_{k=0}^{m_a-1}
\pp_{m_a-k-1}(\cdot)\frac{1}{i^k k!}[\wh{\vgu_a}\wh{u_{\phi,j}}]^{(k)}(0).
\]
Since all $\pp_k, 0\le k< m_a$ are linearly independent, we conclude from the above identity that $[\wh{\vgu_a}\wh{u_{\phi,j}}]^{(j)}(0)=i^j j!$ and $[\wh{\vgu_a}\wh{u_{\phi,j}}]^{(k)}(0)=0$ for all $k\in \{0,\ldots,m_a-1\}\bs\{j\}$, which is just \eqref{vgua:wj}.
\end{proof}

Let $\phi\in (\CH{m})^r$ be a compactly supported refinable vector function satisfying $\wh{\phi}(2\xi)=\wh{a}(\xi)\wh{\phi}(\xi)$ for some finitely supported matrix mask $a\in \lrs{0}{r}{r}$.
Because $\phi^{(j)}=2^{j+1}\sum_{k\in \Z} a(k) \phi^{(j)}(2\cdot-k)$, the finitely supported sequence $u_{\phi,j}\in (\lp{0})^r$ in \eqref{wj} (i.e.,
$u_{\phi,j}:=\phi^{(j)}|_{\Z}$)
satisfies $\tz_a u_{\phi,j}=2^{-j} u_{\phi,j}$, i.e., the finitely supported sequence $u_{\phi,j}$ is an eigenvector of $\tz_a$ for the eigenvalue $2^{-j}$, where
$\tz_a: (\lp{0})^r\rightarrow (\lp{0})^r$ is the transition operator defined by
\be \label{tz}
(\tz_a u)(j):=2\sum_{k\in \Z} a(k)u(2j-k),\qquad u\in (\lp{0})^r.
\ee
Because $\phi=\cd_a^n \phi=\sum_{k\in \Z} (\sd_a^n (\td I_r))(k) \phi(2^n\cdot-k)$, we have
$\phi^{(j)}(2^{-n}\cdot)=2^{jn} \sum_{k\in \Z} (\sd_a^n (\td I_r))(k) \phi^{(j)}(\cdot-k)$.
Therefore, it is easy to conclude that $\phi^{(j)}(2^{-n}\cdot)|_{\Z}=2^{jn} (\sd_a^n(\td I_r))*u_{\phi,j}$ for all $n\in \N$. Moreover, if $\sm_\infty(a)>j$, then $2^{-j}$ is a simple eigenvalue of $\tz_a$ and the sequence $u_{\phi,j}\in (\lp{0})^r$ in \eqref{wj} is the unique eigenvector of $\tz_a$ satisfying $\tz_a u_{\phi,j}=2^{-j} u_{\phi,j}$ and $[\wh{\vgu_a}\wh{u_{\phi,j}}]^{(j)}(0)=i^j j!$.

We are now ready to prove \cref{thm:main}.

\begin{proof}[Proof of \cref{thm:main}]
(1)$\imply$(2).
Let $m_a:=\sr(a)$. By \cref{thm:vca} and item (1), we have $\sr(a)\ge \sm_\infty(a)>m$ and \eqref{phi:poly} must hold. Consequently, $m_a\ge m+1$.
Since $\sm_\infty(a)>m$ implies $\phi\in (\CH{m})^r$, we can define
$u_{\phi,j}:=\phi^{(j)}|_{\Z}$ for $j=0,\ldots,m$, i.e., $u_{\phi,j}(k):=\phi^{(j)}(k)$ for all $k\in \Z$. Then it follows from \cref{lem:phi:Z}
that \eqref{vgua:wj} holds with $m_a\ge m+1$.

Note that $\wh{\phi}(2\xi)=\wh{a}(\xi)\wh{\phi}(\xi)$ is equivalent to $\cd_a \phi=\phi$. Therefore, $\phi=\cd_a^n \phi$ for all $n\in \N$ and hence,
\[
\phi=\cd_a^n \phi=
\sum_{\ell \in \Z} (\sd_a^n (\td I_r))(\ell) \phi(2^n\cdot-\ell)=
2^n \sum_{\ell \in \Z} a_n(\ell)\phi(2^n\cdot-\ell),
\]
where $a_n:=2^{-n}\sd_a^n (\td I_r)$, i.e., $\wh{a_n}(\xi)=\wh{a}(2^{n-1}\xi)\cdots \wh{a}(2\xi)\wh{a}(\xi)$. Since $\phi\in (\CH{m})^r$, now we deduce from the above identity that for $j=0,\ldots,m$,
\be \label{phi:j}
\phi^{(j)}(2^{-n} k)=2^{(j+1)n}
\sum_{\ell\in \Z} a_n(\ell) \phi^{(j)}(k-\ell)=2^{(j+1)n} [a_n*u_{\phi,j}](k),\qquad k\in \Z.
\ee
For a given $u\in (\lp{0})^r$, by the definition of $j:=\ld_m(\wh{\vgu_a}\wh{u})$ and $\beta_j:=\frac{[\wh{\vgu_a}\wh{u}]^{(j)}(0)}{i^j j!}$ in \eqref{phi:deriv}, we have
$\wh{\vgu_a}(\xi)\wh{u}(\xi)=\beta_j (i\xi)^j+\bo(|\xi|^{j+1})$ as $\xi \to 0$.
Using \eqref{vgua:wj} with $m_a\ge m+1$ and $\wh{\vgu_a}(\xi)\wh{u}(\xi)=\beta_j (i\xi)^j+\bo(|\xi|^{j+1})$ as $\xi \to 0$,
we have
\[
\wh{\vgu_a}(\xi)[\wh{u}(\xi)-\beta_j \wh{u_{\phi,j}}(\xi)]=
\wh{\vgu_a}(\xi)\wh{u}(\xi)-
\beta_j \wh{\vgu_a}(\xi)\wh{u_{\phi,j}}(\xi)
=\beta_j(i\xi)^j-\beta_j (i\xi)^j+\bo(|\xi|^{j+1})
=\bo(|\xi|^{j+1})
\]
as $\xi \to 0$ by $j+1\le m+1\le m_a$.
Therefore, $u-\beta_j u_{\phi,j}\in \PV_{j,\vgu_a}$.
Because $\sm_\infty(a)>m\ge j$, we conclude from item (5) of \cref{thm:vca} that
$\limsup_{n\to \infty} 2^{jn} \|(\sd_a^n(\td I_r))*w\|_{(\lp{\infty})^r}=0$ for all $w\in \PV_{j,\vgu_a}$. In particular, it follows from $u-\beta_j u_{\phi,j}\in \PV_{j,\vgu_a}$ that
\[
\limsup_{n\to \infty} 2^{jn} \|(\sd_a^n(\td I_r))*( u-\beta_j u_{\phi,j}) \|_{(\lp{\infty})^r}=0.
\]
Now it follows trivially from \eqref{phi:j} and the above limit that \eqref{phi:deriv} holds. This proves (1)$\imply$(2).

(2)$\imply$(3). Let $w_0\in \lrs{}{1}{r}$ and $u\in (\lp{0})^r$.
Define $\eta:=w_0*\phi:=\sum_{k\in \Z} w_0(k)\phi(\cdot-k)$. For any positive integer $K\in \N$, we now show that
\eqref{phi:deriv} must imply \eqref{vsd:converg}.
Because both $a$ and $u$ are finitely supported, there exists $N\in \N$ such that both $a$ and $u$ are supported inside $[-N,N]$. Define $u_n:=a_n*u$ and $w_n:=\sd_a^n w_0$.
Because the Fourier series of $u_n$ is $\wh{a}(2^{n-1}\xi)\wh{a}(2^{n-2}\xi)\cdots \wh{a}(2\xi)\wh{a}(\xi)\wh{u}(\xi)$, we can easily verify that $u_n$ is supported inside $[-2^n N, 2^n N]$.
Define $w_{0,n}\in \lrs{}{1}{r}$ by $w_{0,n}(2^n k):=w_0(k)$ for all $k\in \Z$ and $w_{0,n}(k)=0$ for all $k\not\in \Z\bs [2^n \Z]$. Then
\[
w_n*u=(\sd_a^n w_0)*u=
2^n (w_{0,n}*a_n)*u=
2^n w_{0,n}*(a_n*u)
=2^n \sum_{\ell\in \Z} w_0(\ell)u_n(\cdot-2^n \ell).
\]
Since $u_n$ is supported inside $[-2^nN, 2^nN]$, for $k\in \Z\cap [-2^n K, 2^n K]$, we have
\[
(w_n*u)(k)=2^n \sum_{\ell\in \Z} w_0(\ell)u_n(k-2^n \ell)
=2^n \sum_{\ell=-N-K}^{N+K}
w_0(\ell) u_n(k-2^n \ell).
\]
Note that $\phi$ is also supported inside $[-N,N]$. Hence,
for $k\in \Z\cap [-2^n K, 2^nK]$, we must have
\[
\eta^{(j)}(2^{-n} k)=\sum_{\ell \in \Z} w_0(\ell)\phi^{(j)}(2^{-n} k-\ell)=\sum_{\ell=-N-K}^{N+K}
w_0(\ell)\phi^{(j)}(2^{-n}(k-2^n \ell)).
\]
Consequently, by $w_n=\sd_a^n w_0$ and $w_n*u=2^n w_{0,n}*u_n$, we deduce from the above two identities that
\[
\max_{k\in \Z \cap [-2^n K, 2^n K]} |[(\sd_a^n w_0)*u](k) 2^{jn}-
\beta_j \eta^{(j)}(2^{-n}k)|
\le C_{K,N,w_0} \| [(\sd_a^n (\td I_r))*u](\cdot) 2^{jn}-\beta_j \phi^{(j)}(2^{-n}\cdot)\|_{(\lp{\infty})^r},
\]
where $C_{K,N,w_0}:=\sum_{\ell=-N-K}^{N+K}\|w_0(\ell)\|<\infty$ and we used $u_n=a_n*u=2^{-n}(\sd_a^n (\td I_r))*u$.
Now we conclude from \eqref{phi:deriv} and the above inequality that \eqref{vsd:converg} holds.
This proves (2)$\imply$(3).

(3)$\imply$(4) is trivial. We now prove (4)$\imply$(5).
Let $u\in \PB$. If $[\wh{\vgu_a}\wh{u}]^{(m)}(0)=0$, then item (5) follows directly from item (4). Suppose that $[\wh{\vgu_a}\wh{u}]^{(m)}(0)\ne 0$. By item (4), for $w_0=\td I_r$, there exists a vector function $\vec{\eta}\in (\CH{m})^r$ such that
$\lim_{n\to \infty} \| [(\sd_a^n (\td I_r)*u](\cdot)2^{mn} - \vec{\eta}^{(m)}(2^{-n}\cdot)\|_{(\lp{\infty})^r}=0$.
By item (3) of \cref{thm:mfilter}, we must have $\varphi\in (\CH{m})^r$ and $\vec{\eta}=\beta_m \varphi^{(m)}$, where $\varphi$ is a compactly supported refinable vector function satisfying $\wh{\varphi}(2\xi)=\wh{a}(\xi)\wh{\varphi}(\xi)$ and $\wh{\vgu_a}(0)\wh{\varphi}(0)=1$.
Because $1$ is a simple eigenvalue of $\wh{a}(0)$ and all $2^k, k\in \N$ are not eigenvalues of $\wh{a}(0)$, we must have the uniqueness of the refinable vector function $\phi$ and hence $\varphi=\phi$. This proves $\phi\in (\CH{m})^r$ and hence (4)$\imply$(5).

(5)$\imply$(1).
For any $u\in \PV_{m,\vgu_a}$,
by our assumption on $\PB$, we must have
$u\in \mspan\{ w(\cdot-k) \setsp w\in \PB, k\in \Z\}$ and hence we can write $u=u_1*c_1+\cdots+u_s*c_s$ with $u_1,\ldots, u_s\in \PB\subseteq \PV_{m-1,\vgu_a}$ and $c_1,\ldots, c_s\in \lp{0}$.
By item (5) and \eqref{phi:deriv:uc}, for all $\ell=1,\ldots,s$, we have
\[
\lim_{n\to \infty} \| [(\sd_a^n (\td I_r))*(u_\ell*c_\ell)](\cdot)2^{mn}-\beta_\ell \wh{c_\ell}(0) \phi^{(m)}(2^{-n}\cdot)
\|_{(\lp{\infty})^{r}}=0,\qquad \ell=1,\ldots,s,
\]
where $\beta_\ell:=\frac{[\wh{\vgu_a}\wh{u_\ell}]^{(m)}(0)}{i^j j!}$.
Hence, for $u=u_1*c_1+\cdots+u_s*c_s$, the above identity implies
\be \label{phi:deriv:Bus}
\lim_{n\to \infty} \| [(\sd_a^n (\td I_r))*(u_1*c_1+\cdots+u_s*c_s)](\cdot)2^{mn}-(\beta_1 \wh{c_1}(0)+\cdots+\beta_s\wh{c_s}(0)) \phi^{(m)}(2^{-n}\cdot)
\|_{(\lp{\infty})^{r}}=0.
\ee
By the definition of $\PV_{j-1,\vgu_a}$ in \eqref{Vj} and $u\in \PV_{m,\vgu_a}$, we must have
\begin{align*}
0&=[\wh{\vgu_a}\wh{u}]^{(m)}(0)
=[\wh{c_1}\wh{\vgu_a}\wh{u_1}]^{(m)}(0)+
\cdots+[\wh{c_s} \wh{\vgu_a}\wh{u_s}]^{(m)}(0)\\
&=\wh{c_1}(0)[\wh{\vgu_a}\wh{u_1}]^{(m)}(0)+
\cdots+\wh{c_s}(0)[\wh{\vgu_a}\wh{u_s}]^{(m)}(0)
=i^m m! \left[\wh{c_1}(0)\beta_1+\cdots+\wh{c_s}(0)\beta_s\right].
\end{align*}
That is, $\beta_1\wh{c_1}(0)+\cdots+\beta_s\wh{c_s}(0)=0$.
Hence, we conclude from \eqref{phi:deriv:Bus} that
$\lim_{n\to \infty} 2^{mn} \|(\sd_a^n (\td I_r))*u
\|_{(\lp{\infty})^{r}}=0$ for all $u\in \PV_{m,\vgu_a}$. This proves that item (4) of \cref{thm:vca} is satisfied. Consequently, by \cref{thm:vca}, we conclude that $\sm_\infty(a)>m$. This proves (5)$\imply$(1).

By \cite[Corollary~5.1]{han03} or
\cite[Corollary~5.6.12]{hanbook}, item (6) implies $\sm_\infty(a)>m$ and hence all items (1)--(5) must hold. Indeed, by item (6), $\phi\in (\CH{m})^r$ and $\mspan\{\wh{\phi}(2\pi k) \setsp k\in \Z\}=\C^r$. Now by \cref{prop:phi}, $\phi$ is an admissible initial function satisfying \eqref{vgu:phi} and \eqref{stability}. Hence, item (2) of \cref{thm:vca} trivially holds by taking $f=\phi$. Hence, by \cref{thm:vca}, item (6) implies $\sm_\infty(a)>m$.
\end{proof}

We now prove \cref{thm:fastconverg} for convergence rates of vector subdivision schemes.

\begin{proof}[Proof of \cref{thm:fastconverg}]
Let $m_a:=\sr(a)$. By \cref{thm:vca} and $\sm_\infty(a)>m$, we have $\sr(a)\ge \sm_\infty(a)>m$ and \eqref{phi:poly} must hold. Consequently, $m_a\ge m+1$.
Since $\sm_\infty(a)>m$ implies $\phi\in (\CH{m})^r$, we can define
$u_{\phi,j}:=\phi^{(j)}|_{\Z}$ for $j=0,\ldots,m$, i.e., $u_{\phi,j}(k):=\phi^{(j)}(k)$ for all $k\in \Z$. Then it follows from \cref{lem:phi:Z}
that \eqref{vgua:wj} holds with $m_a\ge m+1$. Moreover, by the same argument as in \cref{lem:phi:Z}, \eqref{phi:j} must hold, where $a_n:=2^{-n}\sd_a^n (\td I_r)$, i.e., $\wh{a_n}(\xi)=\wh{a}(2^{n-1}\xi)\cdots \wh{a}(2\xi)\wh{a}(\xi)$.

Using \eqref{vgua:wj} and our assumption in \eqref{bettermatch} for $u\in (\lp{0})^r$,
we have
\[
\wh{\vgu_a}(\xi)[\wh{u}(\xi)-\beta_j \wh{u_{\phi,j}}(\xi)]=
\wh{\vgu_a}(\xi)\wh{u}(\xi)-
\beta_j \wh{\vgu_a}(\xi)\wh{u_{\phi,j}}(\xi)
=\beta_j(i\xi)^j-\beta_j (i\xi)^j+\bo(|\xi|^{j+s})
=\bo(|\xi|^{j+s})
\]
as $\xi \to 0$, due to $j+s\le m+1\le m_a$ by \eqref{bettermatch}.
Therefore, $u-\beta_j u_{\phi,j}\in \PV_{j+s-1,\vgu_a}$.

By $m<\sm_\infty\le m+1$, we have $\rho_{m_a}(a,\vgu_a)_\infty=2^{-\sm_\infty(a)}\ge
2^{-m-1}$.
If $m_a=m+1$, then obviously $\rho_{m+1}(a,\vgu_a)_\infty=\rho_{m_a}(a,\vgu_a)_\infty$. If $m+1<m_a$,
then we deduce from \eqref{rhoj} and $\rho_{m_a}(a,\vgu_a)_\infty\ge 2^{-m-1}$ that $\rho_{m+1}(a,\vgu_a)_\infty=\max(2^{-m-1}, \rho_{m_a}(a,\vgu_a)_\infty)=\rho_{m_a}(a,\vgu_a)_\infty$.
Hence, we always have $\rho_{m+1}(a,\vgu_a)_\infty=\rho_{m_a}(a,\vgu_a)_\infty=2^{-\sm_\infty(a)}$.
If $s=m+1-j$, then we trivially have
$\rho_{j+s}(a,\vgu_a)_\infty
=\rho_{m+1}(a,\vgu_a)_\infty
=2^{-\sm_\infty(a)}$.
If $1\le s<m+1-j$, then by \eqref{rhoj}
we have
$\rho_{j+s}(a,\vgu_a)_\infty=\max(2^{-j-s}, \rho_{m_a}(a,\vgu_a)_\infty)=2^{-j-s}$ because $j+s\le m+1\le m_a$ and $\rho_{m_a}(a,\vgu_a)_\infty=2^{-\sm_\infty(a)}<2^{-m}$ by $\sm_\infty(a)>m$.
That is, we have $\rho_{j+s}(a,\vgu_a)_\infty=2^{-j-\nu}$ with $\nu:=\min(s,\sm_\infty(a)-j)>0$
for all $1\le s\le m+1-j$ and $j=0,\ldots,m$.
By $u-\beta_j u_{\phi,j}\in \PV_{j+s-1,\vgu_a}$ and the definition of $\rho_{j+s}(a,\vgu_a)_\infty$, for any $0<\varepsilon<\nu$, there exists a positive constant $C$ such that
\be \label{diff:u:wj}
2^{(j+1)n} \|a_n*(u-\beta_j u_{\phi,j})\|_{(\lp{\infty})^r}
\le C 2^{jn} 2^{-(j+\nu-\varepsilon)n}=
C 2^{-(\nu-\varepsilon)n},\qquad \forall\; n\in \N.
\ee
Now \eqref{phi:fastconverg} follows directly from the proved identity \eqref{phi:j} and the above inequality, because
\[
\| (a_n*u)(\cdot) 2^{(j+1)n}-\beta_j \phi^{(j)}(2^{-n}\cdot)\|_{(\lp{\infty})^r}
=2^{(j+1)n} \| (a_n*u)(\cdot)-(a_n*(\beta_j u_{\phi,j}))(\cdot)\|_{(\lp{\infty})^r}
\le C 2^{-(\nu-\varepsilon)n}.
\]
By the same proof of (2)$\imply$(3) in \cref{thm:main},
\eqref{vsd:fastconverg} follows directly from \eqref{phi:fastconverg}.
\end{proof}

\section{Strengthen Lagrange, Hermite and Generalized Hermite Subdivision Schemes}
\label{sec:hsd}

In this section we shall strength and generalize the current definitions of Lagrange, Hermite and generalized Hermite subdivision schemes and then discuss various types of vector subdivision schemes.
%We shall also provide a few examples to demonstrate our results on vector subdivision schemes.

Recall from \cref{sec:intro} that a (slightly generalized) convergent Lagrange subdivision scheme is

\begin{definition}\label{def:lsd}
\normalfont
Let $r\in \N$ and $m\in \NN$.
We say that \emph{the Lagrange subdivision scheme with mask $a\in \lrs{0}{r}{r}$ is convergent with limiting functions in $\CH{m}$} if for any input vector sequence $w_0\in \lrs{}{1}{r}$, there exist functions $\eta_1,\ldots,\eta_r\in \CH{m}$ such that for every constant $K>0$,
\be \label{lsd:converg:0}
\lim_{n\to \infty} \sup_{k\in \Z \cap [-2^n K, 2^n K]} |(\sd_a^n w_0)(k)e_\ell-\eta_\ell(2^{-n}k)|=0,
\qquad \ell=1,\ldots,r.
\ee
\end{definition}

We now recall the definition of a convergent Hermite subdivision scheme of order $r$, e.g., see \cite[Definition~1]{dm09},
\cite[Definition~1.1]{hyx05}, \cite[Definition~2]{ms19}, \cite[Definition~1]{han20} and references therein.

\begin{definition}\label{def:hsd}
Let $r\in \N$ and $m\in \NN$ with $m\ge r-1$.
We say that \emph{the Hermite subdivision scheme of order $r$
with mask $a\in \lrs{0}{r}{r}$
is convergent with limiting functions in $\CH{m}$} if for any input sequence $w_0\in \lrs{}{1}{r}$, there exists a scalar function $\eta\in \CH{m}$ such that for every constant $K>0$,
\be \label{hsd:converg:0}
\lim_{n\to \infty} \sup_{k\in \Z \cap [-2^n K, 2^n K]} |(\sd_a^n w_0)(k) 2^{(\ell-1)n}e_\ell-\eta^{(\ell-1)}(2^{-n}k)|=0,
\qquad
\ell=1,\ldots,r.
\ee
\end{definition}

The above convergent Lagrange and Hermite subdivision schemes are special cases of convergent generalized Hermite subdivision schemes of type $\ind$ in \cite[Definition~1]{han21} which is given below.

\begin{definition}\label{def:ghsd}
Let $m\in \NN$ and $\ind:=\{\nu_1,\ldots,\nu_r\}\subseteq \{0,\ldots,m\}$ be an ordered multiset with $\nu_1=0$.
We say that \emph{the generalized Hermite subdivision scheme of type $\ind$ with mask $a\in \lrs{0}{r}{r}$ is convergent with limiting functions in $\CH{m}$} if for any input vector sequence $w_0\in \lrs{}{1}{r}$, there exists a scalar function $\eta\in \CH{m}$ such that for every constant $K>0$,
\be \label{ghsd:converg:0}
\lim_{n\to \infty} \sup_{k\in \Z \cap [-2^n K, 2^n K]} |(\sd_a^n w_0)(k) 2^{\nu_\ell n}e_\ell-\eta^{(\nu_\ell)}(2^{-n}k)|=0,
\qquad
\ell=1,\ldots,r.
\ee
\end{definition}

Lagrange and Hermite subdivision schemes have been extensively studied in \cite{ccmm21,ch19,dm06,dm09,dl95,han01,hanbook,han03,han20,mer92,ms17,ms19,mhc20,rv20,zhou00} and references therein, while multivariate generalized Hermite subdivision schemes of type $\ind$ have been systematically studied in \cite{han21}.

We now demonstrate how our results can be applied to further improve known results on convergence of Lagrange, Hermite and generalized Hermite subdivision schemes in \cref{def:lsd,def:hsd,def:ghsd}.

\begin{theorem} \label{thm:lsd}
Let $r\in \N$, $m\in \NN$ and
$a\in \lrs{0}{r}{r}$ such that the condition in \eqref{cond:a:0} on $\wh{a}(0)$ is satisfied.
Let $\phi$ be a vector of compactly supported distributions satisfying $\wh{\phi}(2\xi)=\wh{a}(\xi)\wh{\phi}(\xi)$ and $\wh{\phi}(0)\ne 0$.
Then the Lagrange subdivision scheme with mask $a$ is convergent with limiting functions in $\CH{m}$ as in \cref{def:lsd} if and only if $\sm_\infty(a)>0$ and $\sm_\infty(\phi)>m$.
\end{theorem}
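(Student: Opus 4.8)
The plan is to reduce the Lagrange convergence statement in \cref{def:lsd} to the equivalent conditions of \cref{thm:main}, and then to identify the resulting condition "$\sm_\infty(a)>m$ together with the appropriate matching filter" with the two separate requirements "$\sm_\infty(a)>0$ and $\sm_\infty(\phi)>m$." First I would observe that a Lagrange subdivision scheme is precisely a generalized Hermite scheme of type $\ind=\{0,\ldots,0\}$, so that \eqref{lsd:converg:0} is \eqref{ghsd:converg} with $\sD_\ind=I_r$. In the notation of \cref{def:vsd}, taking $u:=\td e_\ell$ gives $\wh{\vgu_a}(\xi)\wh{u}(\xi)=\wh{\vgu_a}(\xi)e_\ell$, whose lowest-order behavior at $\xi\to0$ must be examined. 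Since $\nu_\ell=0$ for all $\ell$, the correct rescaling exponent in \eqref{vsd:converg} is $j=0$ for every $\ell$, which forces $\wh{\vgu_a}(0)e_\ell\ne0$ for all $\ell=1,\ldots,r$, i.e.\ $\wh{\vgu_a}(0)$ has no zero entry; more importantly, it pins down the matching filter so that the limit functions $\eta_\ell$ in \eqref{lsd:converg:0} are exactly $\eta_\ell=\wh{\vgu_a}(0)e_\ell\,\phi$ (up to the normalization $\wh{\vgu_a}(0)\wh{\phi}(0)=1$), i.e.\ the Lagrange scheme ``converges to $\phi$'' in the sense of \cref{def:vsd} with $m=0$-type rescaling on each coset but with $\CH{m}$ smoothness of the limit.

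Next I would set up the choice of $\PB$. By \cref{def:vsd} and \cref{thm:main}, $\CH{m}$ convergence of the vector subdivision scheme with mask $a$ requires both that the refinable $\phi\in(\CH{m})^r$ and that \eqref{vsd:converg} holds for the spanning set $\PB\subseteq\PV_{m-1,\vgu_a}$. For the Lagrange statement we only care about the sequences $u=\td e_\ell$, but these generically lie only in $\PV_{-1,\vgu_a}=(\lp{0})^r$, not in $\PV_{m-1,\vgu_a}$; so the convergence \eqref{lsd:converg:0} as stated is a weaker requirement than full $\CH{m}$ convergence of the vector scheme unless $\wh{\vgu_a}$ vanishes to the right order against $e_\ell$. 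The key structural fact to extract is therefore: \eqref{lsd:converg:0} holds for all $w_0$ if and only if (a) the data $\{(\sd_a^n w_0)(k)e_\ell\}$ converge at rescaling exponent $0$ for each $\ell$, which by \cref{thm:mfilter}(3) already forces $\phi\in(\CH{0})^r$ together with $\wh{\vgu_a}(0)e_\ell\ne0$, and (b) each limit $\eta_\ell=\wh{\vgu_a}(0)e_\ell\,\phi$ additionally lies in $\CH{m}$. Condition (a) is exactly the content of ``$\sm_\infty(a)>0$'': by \cref{thm:vca} (with $m=0$), $\sm_\infty(a)>0$ is equivalent to convergence of the vector cascade algorithm in $\CH{0}$, hence to \cref{thm:main}(3) with $m=0$, which via the cosets gives precisely \eqref{lsd:converg:0} with continuous $\eta_\ell$. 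Condition (b) then says exactly that $\phi\in(\CH{m})^r$ with $m$-th derivative in the right Hölder class, i.e.\ $\sm_\infty(\phi)>m$ — here I would invoke the smoothness characterization $\sm_\infty(\phi)=\sm_\infty(a)$ from \cref{thm:vca} (item (7) and the consequence quoted there), or more directly argue that once $\phi$ exists and $\sm_\infty(a)>0$, the stability-free direction of \cref{thm:main} (items (1)$\Leftrightarrow$(5)) upgrades $\CH{0}$ convergence to $\CH{m}$ convergence precisely when $\phi\in(\CH{m})^r$, equivalently $\sm_\infty(\phi)>m$.

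For the converse direction I would start from $\sm_\infty(a)>0$ and $\sm_\infty(\phi)>m$. The first gives, by \cref{thm:vca} with $m=0$, that $\phi\in(\CH{0})^r$, that \eqref{cond:a} holds with $m=0$ (so in particular the matching filter values $\wh{\vgu_a}(0)$ are determined and $\phi$ is the unique normalized refinable vector function), and that \cref{thm:main}(3) holds at level $m=0$. The second, $\sm_\infty(\phi)>m$, combined with $\sm_\infty(a)=\sm_\infty(\phi)$ (valid here because $\sm_\infty(a)>0$ forces enough sum rules, or alternatively because the eigenvalue condition \eqref{cond:a:0} plus $\phi\in(\CH{m})^r$ gives stability-type control), yields $\sm_\infty(a)>m$, hence all items of \cref{thm:main}, hence \eqref{vsd:converg} for $u=\td e_\ell$, which unwinds to \eqref{lsd:converg:0} with $\eta_\ell\in\CH{m}$. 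The main obstacle I anticipate is the bookkeeping around the matching filter and the exponent $j$: one must verify carefully that for $u=\td e_\ell$ the exponent $\ld_m(\wh{\vgu_a}\,\wh{u})$ is forced to be $0$ (not some larger value) precisely under the Lagrange normalization, and that the limit function produced by \cref{thm:main} for each coset index $\ell$ is genuinely $\wh{\vgu_a}(0)e_\ell\,\phi$ and not a $\ell$-dependent reshuffling — this is where the simplicity of the eigenvalue $1$ of $\wh{a}(0)$ in \eqref{cond:a:0} and the normalization $\wh{\vgu_a}(0)\wh{\phi}(0)=1$ do the essential work. A secondary subtlety is justifying $\sm_\infty(a)=\sm_\infty(\phi)$ in this setting without assuming stability outright; I would handle that by noting that $\sm_\infty(a)>0$ already implies sum rules of order $>0$ and that, once $\phi\in(\CH{m})^r$, \cref{prop:phi} plus \eqref{cond:a:0} give the needed nondegeneracy, so that the equality of smoothness exponents from \cref{thm:vca} applies.
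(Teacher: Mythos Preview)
Your necessity direction is broadly in line with the paper's argument, though you over-claim when you assert that convergence forces $\wh{\vgu_a}(0)e_\ell\ne0$ for \emph{all} $\ell$: the Lagrange definition allows $\eta_\ell\equiv0$, so nothing rules out a vanishing entry of $\wh{\vgu_a}(0)$. The paper simply picks \emph{one} index $\ell$ with $\wh{\vgu_a}(0)e_\ell\ne0$ (such an $\ell$ exists since $\wh{\vgu_a}(0)\ne0$), applies item~(3) of \cref{thm:mfilter} with $m=0$ and $w_0=\td I_r$ to identify the limit as $\beta_0\phi$, and from $\eta\in(\CH{m})^r$ concludes $\phi\in(\CH{m})^r$. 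The step $\phi\in(\CH{m})^r\Rightarrow\sm_\infty(\phi)>m$ is not automatic from the definitions; it uses that $\phi$ is compactly supported and refinable with a finitely supported mask (the paper cites \cite[Corollary~5.8.2]{hanbook}). For $\sm_\infty(a)>0$, the paper takes $\PB=\{\td e_1,\ldots,\td e_r\}\subseteq\PV_{-1,\vgu_a}$, notes that this set generates all of $(\lp{0})^r\supseteq\PV_{0,\vgu_a}$, and invokes item~(4) of \cref{thm:main} with $m=0$.

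Your sufficiency direction, however, contains a genuine gap. You try to pass from $\sm_\infty(\phi)>m$ to $\sm_\infty(a)>m$ by claiming $\sm_\infty(a)=\sm_\infty(\phi)$, and you propose to justify this via \cref{prop:phi} plus \eqref{cond:a:0}. This equality is \emph{false} in general without stability of the integer shifts of $\phi$: the paper itself emphasizes (see the discussion after \cref{thm:hsd}) that $\sm_\infty(\phi)>\sm_\infty(a)$ often happens, and \cref{prop:phi} gives no stability conclusion. Item~(7) of \cref{thm:vca}, which is what yields $\sm_p(\phi)=\sm_p(a)$, has stability as an explicit hypothesis that you have not established. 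The correct argument is simpler and avoids this entirely: apply \cref{thm:main} only at level $m=0$ (using $\sm_\infty(a)>0$) to obtain \eqref{lsd:converg:0} with $\eta_\ell=[\wh{\vgu_a}(0)e_\ell]\,w_0*\phi$, and then observe directly that $\sm_\infty(\phi)>m$ gives $\phi\in(\CH{m})^r$, hence $\eta_\ell\in\CH{m}$. No upgrade to $\sm_\infty(a)>m$ is needed or available.
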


\begin{proof}
By \eqref{cond:a:0}, $1$ is a simple eigenvalue of $\wh{a}(0)$ and hence there exists a unique vector $\vec{v}\in \C^{1\times r}$ such that $\vec{v}\wh{a}(0)=\vec{v}$ and $\vec{v}\wh{\phi}(0)=1$. Define $\vgu_a:=\td \vec{v}$. Clearly, $\vgu_a$ is an order $1$ matching filter of the mask $a$ satisfying $\wh{\vgu_a}(0)\wh{a}(0)=\wh{\vgu_a}(0)$ and $\wh{\vgu_a}(0)\wh{\phi}(0)=1$.
By our assumption in \eqref{cond:a:0} on $\wh{a}(0)$, up to a multiplicative constant, $\phi$ is the unique refinable vector function satisfying $\wh{\phi}(2\xi)=\wh{a}(\xi)\wh{\phi}(\xi)$.

Sufficiency. By $\sm_\infty(a)>0$ and $\sm_\infty(\phi)>m$, we have $\phi\in (\CH{m})^r$ and we conclude from \cref{thm:main} with $m=0$ that \eqref{lsd:converg:0} holds with $\eta_\ell:=[\wh{\vgu_a}(0) e_\ell] w_0*\phi \in \CH{m}$ for $\ell=1,\ldots,r$.

Necessity. Suppose that \eqref{lsd:converg:0} holds.
Because $\wh{\vgu_a}(0)\ne 0$, there exists $\ell\in \{1,\ldots,r\}$ such that $\wh{\vgu_a}(0) e_\ell\ne 0$.
Define $u:=\td e_\ell$. By \eqref{lsd:converg:0} with $w_0=\td I_r$, \eqref{phi:deriv:u} must hold with $\tau=0$ for some vector function $\eta\in (\CH{m})^r$. Note that $\beta_0:=\wh{\vgu_a}(0)\wh{u}(0)=
\wh{\vgu_a}(0)e_\ell\ne 0$. Now
Applying item (3) of \cref{thm:mfilter} with $u:=\td e_\ell$ and $m=0$, we conclude that $\eta=\beta_0 \phi$. Since $\eta\in (\CH{m})^r$ and $\beta_0\ne 0$, we must have $\phi\in (\CH{m})^r$. Because $\phi$ is a compactly supported refinable vector function in $(\CH{m})^r$ with a finitely supported mask $a\in \lrs{0}{r}{r}$, by \cite[Corollary~5.8.2]{hanbook}, we must have $\sm_\infty(\phi)>m$.
Take $\PB:=\{e_1,\ldots,e_r\} \subseteq \PV_{-1,\vgu_a}$. Then trivially $\mspan\{u(\cdot-k) \setsp u\in \PB, k\in \Z\}=\lrs{0}{1}{r} \supseteq \PV_{0,\vgu_a}$.
Hence,
$\PB$ satisfies the condition in \cref{thm:main} with $m=0$.
By \eqref{lsd:converg:0},
item (4) of \cref{thm:main} with $m=0$ holds. Hence, we conclude from \cref{thm:main} with $m=0$ that $\sm_\infty(a)>0$.
\end{proof}

In \cref{thm:lsd}, if $\sm_\infty(a)>m$ with $m\in \NN$, then we trivially have $\sm_\infty(a)>m\ge 0$ and $\sm_\infty(\phi)\ge \sm_\infty(a)>m$. Therefore, $\sm_\infty(a)>m$ is a sufficient condition for the convergence of the Lagrange subdivision scheme with mask $a$ and limiting functions in $\CH{m}$ in \cref{def:lsd}.

If $\vgu_a\in \lrs{0}{1}{r}$ is an order $m+1$ matching filter of a matrix mask $a\in \lrs{0}{r}{r}$, then it is trivial to observe from its definition in \eqref{mfilter} that $\beta \vgu_a$ is also an order $m+1$ matching filter of $a$ for any $\beta\in \C\bs\{0\}$. To normalize a matching filter $\vgu_a\in \lrs{0}{1}{r}$, since $\wh{\vgu_a}(0)\ne 0$, from now on we can always assume that the first nonzero entry of $\wh{\vgu_a}(0)$ is $1$.

For the convergence of Hermite subdivision schemes in \cref{def:hsd}, we have the following result, which improves \cite[Theorems~1.2 and 1.3]{han20} by dropping some conditions on $\phi$.

\begin{theorem} \label{thm:hsd}
Let $r\in \N$ and $m\in \NN$ with $m\ge r-1$.
Let $a\in \lrs{0}{r}{r}$ satisfying \eqref{cond:a:0} and
let $\vgu_a\in \lrs{0}{1}{r}$ be an order $r$ matching filter of $a$ such that the first nonzero entry of $\wh{\vgu_a}(0)$ is $1$.
Let $\phi$ be a vector of compactly supported distributions satisfying $\wh{\phi}(2\xi)=\wh{a}(\xi)\wh{\phi}(\xi)$ and $\wh{\vgu_a}(0)\wh{\phi}(0)=1$.
\begin{enumerate}
\item[(1)] If the Hermite subdivision scheme with mask $a$ is convergent with limiting functions in $\CH{m}$ as in \cref{def:hsd}, then $\sm_\infty(a)>0$, $\sm_\infty(\phi)>m$, and the matching filter $\vgu_a$ must satisfy
\be \label{vgua:hermite:r}
\wh{\vgu_a}(\xi)=[1+\bo(|\xi|), i\xi+\bo(|\xi|^2),\ldots, (i\xi)^{r-1}+\bo(|\xi|^{r})],\quad \xi \to 0.
\ee

\item[(2)] If $\sm_\infty(a)>r-1$, $\sm_\infty(\phi)>m$ and the matching filter $\vgu_a$ satisfies \eqref{vgua:hermite:r},
then the Hermite subdivision scheme with mask $a$ is convergent with limiting functions in $\CH{m}$.
\end{enumerate}
\end{theorem}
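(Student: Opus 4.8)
The underlying observation is that the Hermite datum $(\sd_a^n w_0)(k)2^{(\ell-1)n}e_\ell$ in \eqref{hsd:converg:0} equals $[(\sd_a^n w_0)*u](k)2^{jn}$ for $u=\td e_\ell$, for which $\wh{\vgu_a}\wh u$ is exactly the $\ell$-th entry of $\wh{\vgu_a}$; so \eqref{vgua:hermite:r} is precisely the statement that $\ld_m(\wh{\vgu_a}\wh u)=\ell-1$ and $\beta_{\ell-1}=1$ for every $\ell=1,\dots,r$, which is what lets one read the conclusions of \cref{thm:main} as \eqref{hsd:converg:0}. Part~(2) is the easy half: since $\sm_\infty(\phi)>m\ge r-1$ gives $\phi\in(\CH m)^r\subseteq(\CH{r-1})^r$ and $\vgu_a$ is an order $(r-1)+1$ matching filter, \cref{thm:main} with $m$ replaced by $r-1$ yields, via $(1)\imply(2)$, the limit \eqref{phi:deriv} for all $u\in(\lp0)^r$; specializing to $u=\td e_\ell$ and using \eqref{vgua:hermite:r} to read off $j=\ell-1$, $\beta_{\ell-1}=1$ gives $\|[(\sd_a^n (\td I_r))*(\td e_\ell)](\cdot)2^{(\ell-1)n}-\phi^{(\ell-1)}(2^{-n}\cdot)\|_{(\lp\infty)^r}\to0$ for $\ell=1,\dots,r$. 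The reduction from $\td I_r$ to an arbitrary row $w_0$ is then verbatim the computation used for $(2)\imply(3)$ in the proof of \cref{thm:main}, producing the scalar limit $\eta:=\sum_k w_0(k)\phi(\cdot-k)\in\CH m$ and the estimate on $\Z\cap[-2^nK,2^nK]$, i.e.\ \eqref{hsd:converg:0}.

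For part~(1), test \eqref{hsd:converg:0} on the $r$ row-inputs $w_0=\td e_i^{\tp}$, $i=1,\dots,r$, and stack the corresponding scalar Hermite limits $\eta_i$ into a column $\vec\eta=[\eta_1,\dots,\eta_r]^{\tp}\in(\CH m)^r$; this gives $\|[(\sd_a^n (\td I_r))*(\td e_\ell)](\cdot)2^{(\ell-1)n}-\vec\eta^{(\ell-1)}(2^{-n}\cdot)\|_{(\lp\infty)^r}\to0$ for each $\ell$, the convergence being promoted from $\Z\cap[-2^nK,2^nK]$ to all of $(\lp\infty)^r$ because $\vec\eta$ is compactly supported (standard density-of-dyadic-rationals argument). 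Now use \cref{thm:mfilter} with its $m$ equal to $r-1$, $u=\td e_\ell$, $\tau=\ell-1$: item~(1) forces $[\wh{\vgu_a}\wh{\td e_\ell}]^{(\kappa)}(0)=0$ for $\kappa<\ell-1$, so $\wh{\vgu_a}(0)=[v_1,0,\dots,0]$; since a matching filter has $\wh{\vgu_a}(0)\ne0$ with first nonzero entry normalized to $1$, we get $v_1=1$, hence $\beta_0=\wh{\vgu_a}(0)e_1=1\ne0$. Then item~(3) of \cref{thm:mfilter} applied with $\ell=1$ shows $\vec\eta$ is not identically zero and equals $\beta_0\varphi=\varphi$ with $\varphi$ compactly supported, $\wh\varphi(2\xi)=\wh a(\xi)\wh\varphi(\xi)$ and $\wh{\vgu_a}(0)\wh\varphi(0)=1$; by the uniqueness of $\phi$ under \eqref{cond:a:0} we conclude $\vec\eta=\phi$, so $\phi\in(\CH m)^r$ and $\sm_\infty(\phi)>m$ by \cite[Corollary~5.8.2]{hanbook}.

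It remains to establish \eqref{vgua:hermite:r} and $\sm_\infty(a)>0$. For the former, feed $\vec\eta=\phi$ into the moment identity \eqref{vgua:u:0} proved inside the proof of \cref{thm:mfilter}: with $u=\td e_\ell$, $\tau=\ell-1$ and $\eta=\vec\eta^{(\ell-1)}=\phi^{(\ell-1)}$ it gives $[\wh{\vgu_a}\wh{\td e_\ell}]^{(\ell-1)}(0)=[\wh{\vgu_a}\wh{\phi^{(\ell-1)}}]^{(\ell-1)}(0)=\big[(i\xi)^{\ell-1}\wh{\vgu_a}(\xi)\wh\phi(\xi)\big]^{(\ell-1)}(0)$, and by Leibniz together with $\wh{\vgu_a}(0)\wh\phi(0)=1$ the right-hand side collapses to $i^{\ell-1}(\ell-1)!$; combined with the vanishing from item~(1) this is exactly \eqref{vgua:hermite:r}. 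For $\sm_\infty(a)>0$, apply \cref{thm:main} with $m=0$ and $\PB=\{\td e_1,\dots,\td e_r\}$: for a fixed row $w_0$, the Hermite limit $\eta\in\CH m\subseteq\CH0$ satisfies $(\sd_a^n w_0)(k)e_1\to\eta(2^{-n}k)$ uniformly on compacts, and for $\ell\ge2$ one has $(\sd_a^n w_0)(k)e_\ell=2^{-(\ell-1)n}[(\sd_a^n w_0)(k)e_\ell 2^{(\ell-1)n}]\to0$ while $\beta_0=\wh{\vgu_a}(0)e_\ell=0$; so item~(4) of \cref{thm:main} with $m=0$ holds, whence $\sm_\infty(a)>0$.

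The delicate point is the step in the second paragraph above: concluding, from convergence alone and without assuming \eqref{cond:a} or the stability of $\phi$ — exactly the extra hypotheses being shed relative to \cite{han20} — that the Hermite basis function is $\phi$ itself and that $\wh{\vgu_a}$ has the precise Taylor profile \eqref{vgua:hermite:r}. Two ingredients make this work: item~(3) of \cref{thm:mfilter}, which upgrades the single scalar fact $\beta_0\ne0$ into nontriviality of the limit and its identification with $\phi$; and the moment identity \eqref{vgua:u:0}, which converts $\vec\eta=\phi$ into the exact normalization of the entries of $\wh{\vgu_a}$ through $\wh{\vgu_a}(0)\wh\phi(0)=1$. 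Everything else — support bookkeeping and the $\td I_r$-versus-$w_0$ reduction — is routine and already present in the proof of \cref{thm:main}.
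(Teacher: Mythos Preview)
Your proof is correct and follows essentially the same route as the paper's: identify the Hermite limit with $\phi$ via item~(3) of \cref{thm:mfilter} for $\ell=1$, extract the vanishing of lower moments of $\wh{\vgu_a}e_\ell$ from item~(1), pin down $\beta_{\ell-1}=1$ through the moment identity \eqref{vgua:u:0}, and then invoke \cref{thm:main} with $m=0$ for $\sm_\infty(a)>0$. The only cosmetic differences are that the paper tests with the matrix input $\td I_r$ directly and verifies item~(5) of \cref{thm:main} (for $\td I_r$) rather than your item~(4) (for all $w_0$); both are equivalent here.
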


\begin{proof}
We first prove item (1). By \eqref{hsd:converg:0} with the initial sequence $w_0=\td I_r$ of matrices (instead of vectors), there must exist a vector function $\eta\in (\CH{m})^r$ such that
\be \label{hsd:converg:2}
\lim_{n\to \infty} \| (\sd_a^n (\td I_r))(\cdot) 2^{(\ell-1)n}e_\ell -\eta^{(\ell-1)}(2^{-n}\cdot)\|_{(\lp{\infty})^r}=0,\qquad
\ell=1,\ldots,r.
\ee
We first prove that $\wh{\vgu_a}(0)=[1,0,\ldots,0]$.
By \eqref{hsd:converg:2}, noting that
$\sd_a^n(\td I_r) e_\ell=(\sd_a^n(\td I_r))*(\td e_\ell)$, we trivially conclude from \eqref{hsd:converg:2} and $\ell-1\ge  1$ for all $\ell=2,\ldots,r$ that
\be \label{sda:eta}
\lim_{n\to \infty} \| (\sd_a^n (\td I_r))* (\td e_\ell)\|_{(\lp{\infty})^r}=0, \qquad \ell=2,\ldots,r.
\ee
That is, \eqref{phi:deriv:u} is satisfied with $\tau=0$, $\eta=0$ and $u=\td e_\ell$ for $\ell=2,\ldots,r$.
If $\wh{\vgu_a}(0) e_\ell \ne 0$ for some $\ell=2,\ldots,r$, then $u:=\td e_\ell$ satisfies $\wh{u}(0)=e_\ell$ and $\wh{\vgu_a}(0)\wh{u}(0)\ne 0$; Hence, it follows from \eqref{sda:eta} and item (3) of \cref{thm:mfilter} with $m=0$ that $\eta$ in \eqref{phi:deriv:u} cannot be identically zero, a contradiction to $\eta=0$ in \eqref{sda:eta}. This proves $\wh{\vgu_a}(0) e_\ell=0$ for all $\ell=2,\ldots,r$. Because the first nonzero entry of $\wh{\vgu_a}(0)$ is $1$, we must have $\wh{\vgu_a}(0)=[1,0,\ldots,0]$. Define $u:=\td e_1$. Because $\wh{\vgu_a}(0) \wh{u}(0)=\wh{\vgu_a}(0) e_1=1 \ne 0$, by \eqref{hsd:converg:2} and item (3) of \cref{thm:mfilter} with $m=0$, we conclude that
$\eta$ cannot be identically zero and $\eta=\beta_0 \varphi$, where $\beta_0:=\wh{\vgu_a}(0) e_1=1$ and
$\varphi$ is a compactly supported continuous refinable vector function satisfying $\wh{\varphi}(2\xi)=\wh{a}(\xi)\wh{\varphi}(\xi)$ and $\wh{\vgu_a}(0)\wh{\varphi}(0)=1$. Due to the condition in \eqref{cond:a:0}, such a refinable vector function $\varphi$ must be unique and we must have $\varphi=\phi$. Hence, $\eta=\beta_0 \phi=\phi$ by $\beta_0=1$. Because $\eta\in (\CH{m})^r$, we conclude that $\phi\in (\CH{m})^r$.
Because $\phi$ is a compactly supported refinable vector function with a finitely supported mask $a\in \lrs{0}{r}{r}$, by \cite[Corollary~5.8.2]{hanbook}, we conclude from $\phi\in (\CH{m})^r$ that $\sm_\infty(\phi)>m$.

We now prove \eqref{vgua:hermite:r}. Because $\eta=\phi$ and $\phi\in (\CH{m})^r$ has compact support, the vector function $\phi^{(j)}$ cannot be identically zero for all $j=0,\ldots,m$. For $\ell=1,\ldots,r$, define $u_\ell:=\td e_\ell$. From \eqref{sda:eta} with $\eta=\phi$, we conclude from \cref{thm:mfilter} that
$\ell-1=\ld_m(\wh{\vgu_a}\wh{u_\ell})$ and $\beta_{\ell-1}=\frac{[\wh{\vgu_a}\wh{u_\ell}]^{(\ell-1)}(0)}{i^{\ell-1} (\ell-1)!}=1$. Therefore,
\[
\wh{\vgu_a}(\xi) e_{\ell}=\wh{\vgu_a}(\xi)\wh{u_\ell}(\xi)=
\beta_{\ell-1}(i\xi)^{\ell-1}+\bo(|\xi|^\ell)
=(i\xi)^{\ell-1}+\bo(|\xi|^\ell),\qquad \xi\to 0
\]
for all $\ell=1,\ldots,r$. This proves \eqref{vgua:hermite:r}.

Take $\PB:=\{e_1,\ldots,e_r\} \subseteq \PV_{-1,\vgu_a}$. Then trivially $\mspan\{u(\cdot-k) \setsp u\in \PB, k\in \Z\}=\lrs{0}{1}{r} \supseteq \PV_{0,\vgu_a}$.
Hence,
$\PB$ satisfies the condition in \cref{thm:main} with $m=0$.
Because we proved $\wh{\vgu_a}(0)=[1,0,\ldots, 0]=e_1^\tp$, we deduce from \eqref{hsd:converg:2} that
item (5) of \cref{thm:main} with $m=0$ holds. Hence, we conclude from \cref{thm:main} with $m=0$ that $\sm_\infty(a)>0$.
This proves item (1).

To prove item (2), by \cref{thm:main} with $m=r-1$ and \eqref{vgua:hermite:r}, we conclude that \eqref{hsd:converg:2} must hold with $\eta=\phi$.
Since $\sm_\infty(\phi)>m$ implies $\phi\in (\CH{m})^r$, the claim in \cref{def:hsd} follows directly from \eqref{hsd:converg:2} with $\eta=\phi\in (\CH{m})^r$.
\end{proof}

Under the assumption that the basis vector function $\phi$ in a convergent Hermite subdivision scheme in \cref{def:hsd} satisfies
\[
\mspan\{\wh{\phi}(2\pi k) \setsp k\in \Z\}=\C^r
\quad \mbox{and}\quad \mspan\{\wh{\phi}(\pi+2\pi k) \setsp k\in \Z\}=\C^r,
\]
then \cite[Theorem~1.2]{han20} shows that
\eqref{cond:a} must hold and an order $m+1$ matching filter $\vgu_a\in \lrs{0}{1}{r}$ determined by \eqref{vgua:value} must satisfy \eqref{vgua:hermite:r}.
Consequently, we proved the condition \eqref{vgua:hermite:r} on $\vgu_a$ in \cref{thm:hsd} under a much weaker condition of an order $m+1$ matching filter $\vgu_a\in \lrs{0}{1}{r}$ of the matrix mask $a$.
Moreover, under the condition that
\begin{enumerate}
\item[(i)] a matrix mask $a\in \lrs{0}{r}{r}$ has order $m+1$ sum rules with an order $m+1$ matching filter $\vgu_a\in \lrs{0}{1}{r}$ of $a$ satisfying \eqref{vgua:hermite:r},
\end{enumerate}
\cite[Theorem~1.3]{han20} shows that $\sm_\infty(a)>m$ guarantees the convergence of the Hermite subdivision scheme with mask $a$ and limiting functions in $\CH{m}$. Conversely, under the condition in the above item (i) and
the additional condition:
\begin{enumerate}
\item[(ii)] the integer shifts of the refinable vector function $\phi$ with the matrix mask $a$ are stable, i.e., $\mspan\{\wh{\phi}(\xi+2\pi k) \setsp k\in \Z\}=\C^r$ for all $\xi\in \R$,
\end{enumerate}
then \cite[Theorem~1.3]{han20} proves that if the Hermite subdivision scheme with mask $a$ is convergent with limiting functions in $\CH{m}$, then $\sm_\infty(a)>m$. The stability condition on $\phi$ guarantees $\sm_\infty(a)=\sm_\infty(\phi)$ and hence $\sm_\infty(\phi)>m$ in \cref{thm:lsd,thm:hsd} implies $\sm_\infty(a)>m$, which is just item (1) of \cref{thm:main}. Therefore, \cref{thm:hsd} improves \cite[Theorems~1.2 and~1.3]{han20}.
Without the stability condition on $\phi$,
it is well known  that $\sm_\infty(\phi)>\sm_\infty(a)$ often happens and some examples are given in \cite[Section~5]{han20} for Hermite subdivision schemes in \cref{def:hsd}.
See \cite{han06} and references therein about how to characterize $\sm_p(\phi)$ without the stability condition on $\phi$ as stated in item (ii).
Therefore, according to \cref{thm:lsd,thm:hsd},
the basis vector functions $\phi$ in \cref{def:lsd,def:hsd} could satisfy $\sm_\infty(\phi)>m$ for a very large integer $m$, however, $\sm_\infty(a)$ could be very small, say $\sm_\infty(a)<m_0<m$. Despite the high smoothness of the refinable vector function $\phi\in (\CH{m})^r$, we cannot employ the vector subdivision scheme in \cref{def:vsd} to compute $\phi^{(j)}$ for $m_0<j\le m$ at all. In this sense, \cref{def:lsd,def:hsd,def:ghsd} are weaker than the vector subdivision scheme defined in \cref{def:vsd}.

By the exact same proof of \cref{thm:hsd}, for convergent generalized Hermite subdivision schemes, we have the following result, which improves \cite[Theorems~3.1 and~4.3]{han21} for dimension one and includes \cref{thm:lsd,thm:hsd} as special cases.

\begin{theorem} \label{thm:ghsd}
Let $m\in \NN$ and $\ind:=\{\nu_1,\ldots,\nu_r\}\subseteq \{0,\ldots,m\}$ be an ordered multiset with $\nu_1=0$. Define $m_\ind:=\max(\nu_1,\ldots,\nu_r)$.
Let $a\in \lrs{0}{r}{r}$ such that $a$ satisfies \eqref{cond:a:0}.
Let $\vgu_a\in \lrs{0}{1}{r}$ be an order $m_\ind+1$ matching filter of the mask $a$ such that the first nonzero entry of $\wh{\vgu_a}(0)$ is $1$.
Let $\phi$ be a vector of compactly supported distributions satisfying $\wh{\phi}(2\xi)=\wh{a}(\xi)\wh{\phi}(\xi)$ and $\wh{\vgu_a}(0)\wh{\phi}(0)=1$.
\begin{enumerate}
\item[(1)] If the generalized Hermite subdivision scheme of type $\ind$ with mask $a$ is convergent with limiting functions in $\CH{m}$ as in \cref{def:ghsd}, then $\sm_\infty(a)>0$, $\sm_\infty(\phi)>m$, and the matching filter $\vgu_a$ of the matrix mask $a$ must satisfy
\be \label{vgua:ghermite:r}
\wh{\vgu_a}(\xi)=[(i\xi)^{\nu_1}+\bo(|\xi|^{\nu_1+1}), (i\xi)^{\nu_2}+\bo(|\xi|^{\nu_2+2}),\ldots, (i\xi)^{\nu_r}+\bo(|\xi|^{\nu_r+1})],\quad \xi \to 0.
\ee

\item[(2)] If $\sm_\infty(a)>m_\ind$, $\sm_\infty(\phi)>m$ and the matching filter $\vgu_a$ satisfies \eqref{vgua:ghermite:r},
then the generalized Hermite subdivision scheme of type $\ind$ with mask $a$ is convergent with limiting functions in $\CH{m}$ as in \cref{def:ghsd}.
\end{enumerate}
\end{theorem}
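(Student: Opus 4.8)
The plan is to imitate the proof of \cref{thm:hsd} line by line, replacing the Hermite index $\ell-1$ by $\nu_\ell$ throughout, and to reduce both implications to \cref{thm:main} applied with $m=0$ (for the statement $\sm_\infty(a)>0$) together with repeated applications of \cref{thm:mfilter} to the Dirac sequences $u=\td e_\ell$, $\ell=1,\dots,r$, using the uniqueness of the refinable vector function $\phi$ provided by \eqref{cond:a:0}. For the ``only if'' direction I would first run the scheme on the matrix initial data $w_0=\td I_r$, applying \cref{def:ghsd} to each of its $r$ rows, to get a single vector function $\eta\in(\CH{m})^r$ with $\lim_{n\to\infty}\|(\sd_a^n(\td I_r))(\cdot)2^{\nu_\ell n}e_\ell-\eta^{(\nu_\ell)}(2^{-n}\cdot)\|_{(\lp{\infty})^{r}}=0$ for every $\ell$; crucially the \emph{same} $\eta$ serves all $\ell$, since \cref{def:ghsd} asks for one common limiting scalar function.

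Next I would determine $\wh{\vgu_a}(0)$. For $\ell$ with $\nu_\ell\ge1$, dividing by $2^{\nu_\ell n}$ gives $(\sd_a^n(\td I_r))*(\td e_\ell)\to0$, and item (3) of \cref{thm:mfilter} (with its internal $m$ set to $0$, so $\tau=0$ and $\beta_0=\wh{\vgu_a}(0)e_\ell$) forces $\wh{\vgu_a}(0)e_\ell=0$. For distinct $\ell,\ell'$ with $\nu_\ell=\nu_{\ell'}=0$, the common $\eta$ yields $(\sd_a^n(\td I_r))*(\td(e_\ell-e_{\ell'}))\to0$, whence the same application of \cref{thm:mfilter} gives $\wh{\vgu_a}(0)e_\ell=\wh{\vgu_a}(0)e_{\ell'}$. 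Since $\wh{\vgu_a}(0)\ne0$ with first nonzero entry normalized to $1$ and $\nu_1=0$, this yields $\wh{\vgu_a}(0)e_\ell=1$ if $\nu_\ell=0$ and $=0$ otherwise. Taking $u=\td e_1$, so $\beta_0=\wh{\vgu_a}(0)e_1=1\ne0$, item (3) of \cref{thm:mfilter} then gives that $\eta$ is nontrivial and equals a compactly supported refinable $\varphi$ with $\wh{\vgu_a}(0)\wh{\varphi}(0)=1$; by the uniqueness in \eqref{cond:a:0}, $\varphi=\phi$, so $\eta=\phi\in(\CH{m})^r$, and \cite[Corollary~5.8.2]{hanbook} gives $\sm_\infty(\phi)>m$ (and, as in \cref{thm:hsd}, the sum-rule/eigenvalue facts about $a$ that make \cref{lem:phi:Z} and item (2) of \cref{thm:mfilter} usable).

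Now I would prove \eqref{vgua:ghermite:r}. Since $\phi=\eta\in(\CH{m})^r$ is a nontrivial compactly supported refinable vector function and $\nu_\ell\le m$, every $\phi^{(\nu_\ell)}$ is nontrivial, so \eqref{phi:deriv:u} holds with $u=\td e_\ell$, $\tau=\nu_\ell$ and limit $\phi^{(\nu_\ell)}$; item (1) of \cref{thm:mfilter} gives $[\wh{\vgu_a}e_\ell]^{(j)}(0)=0$ for $j<\nu_\ell$, and then \cref{thm:mfilter} (item (2), using the uniqueness of $\phi$, exactly as in \cref{thm:hsd}) forces $\ld_m(\wh{\vgu_a}\wh{u})=\nu_\ell$ and $\beta_{\nu_\ell}=1$, i.e. $\wh{\vgu_a}(\xi)e_\ell=(i\xi)^{\nu_\ell}+\bo(|\xi|^{\nu_\ell+1})$. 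Finally, with $\PB:=\{e_1,\dots,e_r\}\subseteq\PV_{-1,\vgu_a}$ (so $\mspan\{u(\cdot-k)\setsp u\in\PB,k\in\Z\}=\lrs{0}{1}{r}\supseteq\PV_{0,\vgu_a}$), the structure of $\wh{\vgu_a}(0)$ just obtained makes item (5) of \cref{thm:main} hold with $m=0$, so \cref{thm:main} delivers $\sm_\infty(a)>0$; this finishes part (1). For part (2), assuming $\sm_\infty(a)>m_\ind$, $\sm_\infty(\phi)>m$ and \eqref{vgua:ghermite:r}, I would apply \cref{thm:main} with its $m$ equal to $m_\ind$: for $u=\td e_\ell$, \eqref{vgua:ghermite:r} gives $\ld_{m_\ind}(\wh{\vgu_a}\wh{u})=\nu_\ell$ and $\beta_{\nu_\ell}=1$, so \eqref{phi:deriv} (and its extension from $\td I_r$ to a general $w_0$, as in the step (2)$\imply$(3) of \cref{thm:main}) becomes $\lim_{n\to\infty}\sup_{k\in\Z\cap[-2^nK,2^nK]}|(\sd_a^n w_0)(k)2^{\nu_\ell n}e_\ell-\eta^{(\nu_\ell)}(2^{-n}k)|=0$ with $\eta:=\sum_{k\in\Z}w_0(k)\phi(\cdot-k)$, and $\eta\in\CH{m}$ since $\sm_\infty(\phi)>m$ forces $\phi\in(\CH{m})^r$; this is precisely \cref{def:ghsd}.

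The genuinely new point, and the only real obstacle relative to \cref{thm:hsd}, is the bookkeeping caused by allowing $\ind$ to be a multiset: $\wh{\vgu_a}(0)$ is no longer the unit row $e_1^\tp$ but carries a $1$ in every slot $\ell$ with $\nu_\ell=0$, and the device that pins these coincident entries down is the observation that the limiting scalar function $\eta$ is shared by all $r$ components. The remaining step that needs care — already present in \cref{thm:hsd} and to be handled identically — is supplying the hypothesis \eqref{cond:a} (equivalently, that $a$ has order $m+1$ sum rules with matching filter $\vgu_a$, so that \cref{lem:phi:Z} and item (2) of \cref{thm:mfilter} apply) from the already-derived facts $\phi\in(\CH{m})^r$, $\sm_\infty(\phi)>m$, and \eqref{cond:a:0}, via \cite[Corollary~5.8.2]{hanbook} and the standard link between smoothness of a compactly supported refinable vector function and the eigenstructure of $\wh{a}(0)$.
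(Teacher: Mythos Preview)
Your proposal is correct and follows exactly the route the paper takes: the paper's own proof of \cref{thm:ghsd} is literally ``By the exact same proof of \cref{thm:hsd}'', and you have carried out precisely that adaptation, including the one genuinely new bookkeeping step (pinning down the entries of $\wh{\vgu_a}(0)$ when several $\nu_\ell$ equal $0$ by exploiting the common limiting function $\eta$). Your identification of the residual care point---supplying \eqref{cond:a} so that item~(2) of \cref{thm:mfilter} applies---matches the implicit step in the paper's proof of \cref{thm:hsd}.
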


The above results and discussions motivate us to strengthen \cref{def:lsd,def:hsd,def:ghsd} for Lagrange, Hermite and generalized Hermite subdivision schemes and to introduce new types of vector subdivision schemes.
%Let $r\in \N$ and $m\in \NN$.
Let $a\in \lrs{0}{r}{r}$ such that $a$ satisfies \eqref{cond:a:0}.
Let $\vgu_a\in \lrs{0}{1}{r}$ be an order $m+1$ matching filter of $a$ such that the first nonzero entry of $\wh{\vgu_a}(0)$ is $1$.
Let $\phi=[\phi_1,\ldots,\phi_r]^\tp$ be the unique vector of compactly supported distributions satisfying $\wh{\phi}(2\xi)=\wh{a}(\xi)\wh{\phi}(\xi)$ and $\wh{\vgu_a}(0)\wh{\phi}(0)=1$.
By \cref{thm:main,thm:fastconverg},
we say that the vector subdivision scheme with mask $a\in \lrs{0}{r}{r}$ in \cref{def:vsd} is

\begin{enumerate}
\item[(1)] a $\CH{m}$ convergent Lagrange subdivision scheme with mask $a$ if $\sm_\infty(a)>m$ and $\wh{\vgu_a}(0)=[1,\ldots,1]$. Then by \cref{thm:main}, $\phi\in (\CH{m})^r$ and \eqref{lsd:converg} must hold with $\eta=w_0*\phi\in \CH{m}$. Moreover, we say that
    a $\CH{m}$ convergent Lagrange subdivision scheme is fast convergent if $\sm_\infty(a)>m$ and $\wh{\vgu_a}(\xi)=[1,\ldots,1]+\bo(|\xi|^{m+1})$
    as $\xi \to 0$.
Then by \cref{thm:fastconverg}, for any $0<\varepsilon<\nu$ with $\nu:=\min(m+1,\sm_\infty(a))$, there exists a positive constant $C$ such that
\be \label{lsd:fastconverg}
\| (\sd_a^n(\td I_r))(\cdot)e_\ell-\phi(2^{-n}\cdot)\|_{(\lp{\infty})^r}
\le C 2^{-(\nu-\varepsilon)n}, \qquad \forall\, n\in \N, \ell=1,\ldots,r.
\ee
\item[(2)] a $\CH{m}$ convergent Hermite subdivision scheme of order $r$ with mask $a$ if $\sm_\infty(a)>m\ge r-1$ and the order $m+1$ matching filter $\vgu_a$ of $a$ satisfies \eqref{vgua:hermite:r}.
Then by \cref{thm:main}, \eqref{hsd:converg:0} holds with
$\eta=w_0*\phi\in \CH{m}$.
Moreover, we say that a $\CH{m}$ convergent Hermite subdivision scheme of order $r$ is fast convergent if $\sm_\infty(a)>m$ and
\be  \label{vgua:hemrite}
\wh{\vgu_a}(\xi)=
[1,i\xi,\ldots,(i\xi)^{r-1}]+\bo(|\xi|^{m+1}),\quad \xi \to 0.
\ee
Then by \cref{thm:fastconverg}, for $j=0,\ldots, m$ and $0<\varepsilon<\nu$ with $\nu:=\min(m-r+2,\sm_\infty(a)-r+1)$, there exists $C>0$ such that
\[
\| (\sd_a^n(\td I_r))(\cdot)e_j 2^{(j-1)n}-\phi^{(j-1)}(2^{-n}\cdot)\|_{(\lp{\infty})^r}
\le C 2^{-(\sm_\infty(a)-j+1-\varepsilon)n}, \quad \forall\, n\in \N, j=1,\ldots,r.
\]

\item[(3)] a $\CH{m}$ convergent scalar-type vector subdivision scheme with mask $a$ if $\sm_\infty(a)>m$ and
$\wh{\vgu_a}(\xi)=[1+\bo(|\xi|), \bo(|\xi|^{m+1}),\ldots, \bo(|\xi|)^{m+1}]$ as $\xi \to 0$.
Then by \cref{thm:main}, we must have
$\lim_{n\to \infty}
\| (\sd_a^n(\td I_r))(\cdot)e_1-\phi(2^{-n}\cdot)\|_{(\lp{\infty})^r}
=0$ and
$\lim_{n\to \infty}
2^{mn}\|(\sd_a^n(\td I_r))(\cdot)e_\ell\|_{(\lp{\infty})^r}
=0$ for all $\ell=2,\ldots,r$.
Moreover, we say that a $\CH{m}$ convergent scalar-type vector subdivision scheme is fast convergent if $\sm_\infty(a)>m$ and $\wh{\vgu_a}(\xi)=[1,0,\ldots,0] + \bo(|\xi|^{m+1})$ as $\xi \to 0$.
Then for any $0<\varepsilon<\nu$ with $\nu:=\min(m+1,\sm_\infty(a))$, \eqref{lsd:fastconverg} with $\ell=1$ holds.

\item[(4)] a $\CH{m}$ convergent generalized Hermite subdivision scheme of type $\ind$ if $\sm_\infty(a)>m$ and
\be  \label{vgua:birkhoff}
\wh{\vgu_a}(\xi)=
[1+\bo(|\xi|),(i\xi)^{\nu_2}+\bo(|\xi|^{\nu_2+1}),
\ldots, (i\xi)^{\nu_r}+\bo(|\xi|^{\nu_r+1})]+\bo(|\xi|^{m+1}),\quad \xi \to 0,
\ee
where $\ind:=\{\nu_1,\ldots, \nu_r\}\subseteq \{0,\ldots,m\}$ with $\nu_1=0$.
Then by \cref{thm:main},
\[
\lim_{n\to \infty}
\| (\sd_a^n(\td I_r))(\cdot)e_j 2^{\nu_j n}-
\phi^{(\nu_j)}(2^{-n}\cdot)\|_{(\lp{\infty})^r}
=0,\qquad \forall\; j=1,\ldots,r.
\]
\item[(5)] a $\CH{m}$ convergent balanced vector subdivision scheme if $\sm_\infty(a)>m$ and
\be \label{vgua:bsd}
\wh{\vgu_a}(\xi)=\wh{c}(\xi)[1, e^{i\xi/r},\ldots, e^{i\xi(r-1)/r}]+\bo(|\xi|^{m+1}),\quad \xi \to 0 \quad \mbox{with}\quad
c\in \lp{0}, \wh{c}(0)=1.
\ee
Balanced vector subdivision schemes enjoy the property $\PR_{m,\vgu_a}:=\{ \pq*\vgu_a \setsp \pq\in \PL_m\}=\{ (\pq(r\cdot), \pq(r\cdot+1),\ldots, \pq(r\cdot+r-1)) \setsp \pq\in \PL_m\}$ and play the key role in the balanced fast multiwavelet transform, e.g., see \cite{han09,hanbook} and references therein. If in addition $c=\td$, then the vector subdivision scheme has order $m+1$ linear-phase moments for the polynomial-interpolation property, see \cite[Theorem~5.3]{han21} for details.
\end{enumerate}

%Obviously, all the vector subdivision schemes in items (1)--(3) are special cases of the generalized Hermite subdivision schemes in item (4).
Given many types of vector subdivision schemes in the above discussion, a natural question is whether they are essentially different to each other. In particular, whether a strengthened Lagrange subdivision scheme in item (1) is really different to the strengthened Hermite subdivision schemes in item (3).
To do so, we need to recall the notion of the normal form of a matrix mask.
For $U\in \lrs{0}{r}{r}$, we say that $U$ (or $\wh{U}$) is \emph{strongly invertible} if $\det(\wh{U})$ is a nonzero monomial, in other words, $(\wh{U}(\xi))^{-1}$ is a matrix of $2\pi$-periodic trigonometric polynomials.
Using the normal form of matrix masks in \cite[Theorem~5.6.4]{hanbook} and \cite[Theorem~4.1]{han20}, we have the following result, which generalizes \cite[Theorem~1.4]{han20}.

\begin{prop}\label{prop:vsd}
Let $r\in \N$ and $m\in \NN$.
Let $\mathring{a}\in \lrs{0}{r}{r}$ such that $\sm_\infty(\mathring{a})>m$ (such a matrix mask $\mathring{a}$ always exists). If $r>1$, then
for any given $\vgu_a\in \lrs{0}{1}{r}$ with $\wh{\vgu_a}(0)\ne 0$,
there exists a strongly invertible sequence $U\in \lrs{0}{r}{r}$ such that $\sm_\infty(a)=\sm_\infty(\mathring{a})>m$ and $\vgu_a$ is an order $m+1$ matching filter of $a$, where $a\in \lrs{0}{r}{r}$ is defined by $\wh{a}(\xi):=\wh{U}(2\xi)\wh{\mathring{a}}(\xi) (\wh{U}(\xi))^{-1}$.
\end{prop}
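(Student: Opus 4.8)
The idea is to produce $a$ from $\mathring a$ by a strong conjugation, which forces $\sm_\infty(a)=\sm_\infty(\mathring a)$ and transforms matching filters in a controllable way, so that the whole problem reduces to a ``straightening'' statement about row vectors. I would first record two elementary facts. (i) If $U\in\lrs{0}{r}{r}$ is strongly invertible and $\wh a(\xi):=\wh U(2\xi)\wh{\mathring a}(\xi)(\wh U(\xi))^{-1}$, then $\sm_\infty(a)=\sm_\infty(\mathring a)$; this is part of the normal-form theory, \cite[Theorem~5.6.4]{hanbook} and \cite[Theorem~4.1]{han20} (and, by \cref{thm:smp}, the value is independent of the matching filter used to define it). (ii) With the same notation, inserting $(\wh U(2\xi))^{-1}\wh U(2\xi)=I_r$ into \eqref{mfilter} shows at once that if $\mathring\vgu\in\lrs{0}{1}{r}$ is an order $m+1$ matching filter of $\mathring a$, then $\wh{\mathring\vgu}(\xi)(\wh U(\xi))^{-1}$ is an order $m+1$ matching filter of $a$. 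Since $\sm_\infty(\mathring a)>m$ implies $\sr(\mathring a)\ge\sm_\infty(\mathring a)>m$ by \cref{thm:vca}, the mask $\mathring a$ does admit an order $m+1$ matching filter $\mathring\vgu$ with $\wh{\mathring\vgu}(0)\ne0$. (Such a $\mathring a$ always exists, as noted; for instance, put a scalar mask of a high-order B-spline on the diagonal of an $r\times r$ matrix.)

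The technical core is the claim --- valid precisely because $r>1$ --- that for every $1\times r$ row $\wh w$ of $2\pi$-periodic trigonometric polynomials with $\wh w(0)\ne0$ there is a strongly invertible $W\in\lrs{0}{r}{r}$ with $\wh w(\xi)\wh W(\xi)=[1,0,\ldots,0]+\bo(|\xi|^{m+1})$, $\xi\to0$. I would prove it in three steps, working in the Laurent polynomial ring $\C[z,z^{-1}]$ with $z=e^{-i\xi}$, which is a principal ideal domain. (a) A constant invertible matrix reduces to $\wh w(0)=[1,0,\ldots,0]$, so that $\wh w_1(1)=1$. (b) Replace $\wh w_1$ by $\wh w_1+(z-1)^{m+1}g$, which alters $\wh w$ only by a term of order $\bo(|\xi|^{m+1})$; since $\wh w_1$ is coprime to $z-1$, the Chinese Remainder Theorem lets one choose $g\in\C[z,z^{-1}]$ so that $\wh w_1+(z-1)^{m+1}g$ becomes coprime to $\gcd(\wh w_2,\ldots,\wh w_r)$, hence the perturbed row is unimodular over $\C[z,z^{-1}]$. (c) A unimodular row over a PID extends to a square matrix of unit (monomial) determinant; its inverse is the desired $W$. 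For $r=1$ this fails, as strong invertibility forces $\wh W$ to be a single monomial, which is why the hypothesis $r>1$ is essential. Alternatively, this claim is exactly what the normal-form construction of \cite[Theorem~5.6.4]{hanbook} provides, and may simply be quoted.

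Applying the claim to $\wh{\vgu_a}$ and to $\wh{\mathring\vgu}$ (both nonzero at $0$) yields strongly invertible $U_1,U_2\in\lrs{0}{r}{r}$ with $\wh{\vgu_a}\wh U_1=[1,0,\ldots,0]+\bo(|\xi|^{m+1})$ and $\wh{\mathring\vgu}\wh U_2=[1,0,\ldots,0]+\bo(|\xi|^{m+1})$. Put $U:=U_1*U_2^{-1}$; it lies in $\lrs{0}{r}{r}$, is strongly invertible because $\det\wh U=\det\wh U_1/\det\wh U_2$ is a monomial, and we set $\wh a(\xi):=\wh U(2\xi)\wh{\mathring a}(\xi)(\wh U(\xi))^{-1}$, so $\sm_\infty(a)=\sm_\infty(\mathring a)>m$ by (i). Multiplying $\wh{\mathring\vgu}\wh U_2=[1,0,\ldots,0]+\bo(|\xi|^{m+1})$ on the right by $(\wh U_2)^{-1}$ gives $\wh{\mathring\vgu}(\xi)=[1,0,\ldots,0](\wh U_2(\xi))^{-1}+\bo(|\xi|^{m+1})$, whence $\wh{\vgu_a}(\xi)\wh U(\xi)=[1,0,\ldots,0](\wh U_2(\xi))^{-1}+\bo(|\xi|^{m+1})=\wh{\mathring\vgu}(\xi)+\bo(|\xi|^{m+1})$, i.e.\ $\wh{\vgu_a}(\xi)=\wh{\mathring\vgu}(\xi)(\wh U(\xi))^{-1}+\bo(|\xi|^{m+1})$. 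By (ii), $\wh{\mathring\vgu}(\cdot)(\wh U(\cdot))^{-1}$ is an order $m+1$ matching filter of $a$; since $\wh{\vgu_a}(0)=\wh{\mathring\vgu}(0)(\wh U(0))^{-1}\ne0$ and the matching-filter condition \eqref{mfilter} depends only on $\{\wh{\vgu_a}^{(j)}(0)\}_{j=0}^{m}$, the row $\wh{\vgu_a}$ has the same order-$\le m$ jet and therefore $\vgu_a$ is itself an order $m+1$ matching filter of $a$, as required. The one genuinely delicate point is steps (b)--(c) --- arranging that the straightening matrix can be taken strongly invertible --- which is exactly where the hypothesis $r>1$ enters.
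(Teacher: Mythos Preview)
Your proof is correct and follows the same overall strategy as the paper: obtain an order $m+1$ matching filter $\mathring\vgu$ for $\mathring a$, find strongly invertible $U_1,U_2$ that straighten $\wh{\vgu_a}$ and $\wh{\mathring\vgu}$ to $[1,0,\ldots,0]+\bo(|\xi|^{m+1})$, and combine them into $U$. The only tactical difference is in the straightening step: the paper writes down an explicit $r\times r$ matrix $\wh{U_1}$ with determinant $1$ (using a truncated-Taylor inversion $\wh{u_1}\approx 1/\wh{\vgu_1}$ together with the identity $(1-\wh{u_1}/\wh{u_1}(0))^{m+1}=1-\wh{u_1}\wh g$), whereas you argue abstractly via CRT and unimodular-row completion over the PID $\C[z,z^{-1}]$; both routes are valid, with the paper's being fully constructive and yours more conceptual.
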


\bp The existence of a desired matrix mask $\mathring{a}$ is guaranteed by \cite[Proposition~6.2]{han09} or \cite[Theorem~1.4]{han20}.
Because $\sm_\infty(\mathring{a})>m$, by \cref{thm:vca}, \eqref{cond:a} must holds for $\mathring{a}$ and hence $\mathring{a}$ must have an order $m+1$ matching filter $\mathring{\vgu}\in \lrs{0}{1}{r}$ of the mask $\mathring{a}$.
By \cite[Theorem~2.1]{han09},
there exist strongly invertible sequences $U_1, U_2\in \lrs{0}{r}{r}$ such that
\be \label{nf:U12}
\wh{\vgu_a}(\xi)\wh{U_1}(\xi)=[1,0,\ldots,0]+\bo(|\xi|^{m+1})
\quad \mbox{and}\quad
\wh{\mathring{\vgu}}(\xi)\wh{U_2}(\xi)=[1,0,\ldots,0]+\bo(|\xi|^{m+1}),\quad \xi \to0.
\ee
For the convenience of the reader,
we provide a self-contained proof of \eqref{nf:U12} using the idea in \cite{han03,han09} for the normal form of matrix masks.
Write $[\vgu_1,\ldots,\vgu_r]:=\vgu_a$.
Because $\wh{\vgu_a}(0)\ne 0$, without loss of generality we can assume $\wh{\vgu_1}(0)\ne 0$, otherwise we perform a permutation on the entries of $\vgu_a$. Since $\wh{\vgu_1}(0)\ne 0$, we can easily find $u_1,\ldots,u_r\in \lp{0}$ such that
\[
\wh{u_1}(\xi)=1/\wh{\vgu_1}(\xi)+\bo(|\xi|^{m+1})
\quad \mbox{and}\quad
\wh{u_\ell}(\xi)=\wh{\vgu_\ell}(\xi)/\wh{\vgu_1}(\xi)+\bo(|\xi|^{m+1}),\qquad \xi \to 0, \ell=2,\ldots,r.
\]
Using the binomial expansion and $\wh{u_1}(0)\ne 0$, we can write
$(1-\wh{u_1}(\xi)/\wh{u_1}(0))^{m+1}=
1-\wh{u_1}(\xi)\wh{g}(\xi)$ for a unique $2\pi$-periodic trigonometric polynomial $\wh{g}$.
Define $U_1\in \lrs{0}{r}{r}$ by
\[
\wh{U_1}(\xi):=\frac{1}{\wh{\vgu_1}(0)}\left[ \begin{matrix}
\wh{u_1}(\xi)+\wh{u_2}(\xi) &1-(\wh{u_1}(\xi)+\wh{u_2}(\xi))\wh{g}(\xi)
&-\wh{u_3}(\xi) &\cdots &-\wh{u_r}(\xi)\\
-1 &\wh{g}(\xi) &0 &\cdots &0\\
0 &0 &1 &\cdots &0\\
\vdots &\vdots &\vdots &\ddots &\vdots\\
0 &0 &0 &\cdots &1\end{matrix}\right].
\]
Note that $\det(\wh{U_1}(\xi))=1$ and the first identity in \eqref{nf:U12} holds. By the same argument, there exists a strongly invertible sequence $U_2\in \lrs{0}{r}{r}$ such that the second identity in \eqref{nf:U12} holds.
Define $\wh{U}(\xi):=\wh{U_2}(\xi)(\wh{U_1}(\xi))^{-1}$. Then $U$ is strongly invertible. Now it is straightforward to check
that $\vgu_a$ is an order $m+1$ matching filter of the matrix mask $a$ and $\sm_\infty(a)=\sm_\infty(\mathring{a})>m$.
\end{proof}

Finally, we discuss how to choose a desired $\PB$ in \cref{thm:main}.
Let $U_1=[u_1,\ldots, u_r]$ be constructed in the proof of \cref{prop:vsd} such that the first identity in \eqref{nf:U12} holds, where
$u_1,\ldots, u_r$ are the column vectors of $U_1$.
One possible choice of $\PB\subseteq \PV_{m-1,\vgu_a}$ in \cref{thm:main} is to choose $\PB=\{\nabla^{m} \td e_1, u_2,\ldots, u_r\}$.

\section{Some Examples of Various Types of Vector Subdivision Schemes}
\label{sec:ex}

In this section we give some examples of vector subdivision schemes whose matrix masks have short supports and symmetry. As an application of \cref{thm:fastconverg}, their convergence rates are also provided.

Though all different types of vector subdivision schemes can be constructed theoretically through \cref{prop:vsd}, the support of the matrix mask $a$ in \cref{prop:vsd} is often very large and may lack symmetry. Therefore, it is not that useful to construct vector subdivision schemes by \cref{prop:vsd} for practical purposes. We  now discuss how to construct particular vector subdivision schemes with short support, symmetry, and high smoothness. For a given integer $m_a\in \N$, to obtain all filters $a\in \lrs{0}{r}{r}$ such that $a$ has order $m_a$ sum rules with a matching filter $\vgu_a\in \lrs{0}{1}{r}$, it suffices for us to solve a system of equations induced by \eqref{sr} with $m=m_a-1$, explicitly,
\be \label{sr:ma}
\wh{\vgu_a}(2\xi)\wh{a}(\xi)=\wh{\vgu_a}(\xi)+\bo(|\xi|^{m_a})
\quad \mbox{and}\quad
\wh{\vgu_a}(2\xi)\wh{a}(\xi+\pi)=\bo(|\xi|^{m_a}), \qquad \xi \to 0.
\ee
Then we estimate $\sm_\infty(a)$ to check the condition $\sm_\infty(a)>m$.
By $\fs(a)$ we denote the smallest interval such that $a$ vanishes outside it.
In the following, we always define $\phi$ to be the unique refinable vector function satisfying $\wh{\phi}(2\xi)=\wh{a}(\xi)\wh{\phi}(\xi)$ and $\wh{\vgu_a}(0)\wh{\phi}(0)=1$.
Let $m\in \NN$ be the largest nonnegative integer such that $m<\sm_\infty(a)\le m+1$.
To measure the errors and convergence rates of the vector subdivision scheme with mask $a$, for $u\in (\lp{0})^r$ and $n\in \N$, we define
\be \label{En}
\err_u(n):=\| [(\sd_a^n(\td I_r))*u](\cdot) 2^{jn}-\beta_j \phi^{(j)}(2^{-n}\cdot)\|_{(\lp{\infty})^r}
\quad \mbox{with}\quad
j:=\ld_{m}(\wh{\vgu_a}\wh{u}),\;
\beta_j:=\frac{[\wh{\vgu_a} \wh{u}]^{(j)}(0)}{i^j j!}.
\ee
Then the convergence rate of the vector subdivision scheme is approximately given by the slope of $-\log_2 E_u(n)$ with the subdivision level $n\in \N$ as $n\to \infty$. We now present a few examples of several vector subdivision schemes with various properties.

\begin{example}\label{ex1}
\normalfont
Solving the equations induced by \eqref{sr:ma} with $m_a:=4$ and $\wh{\vgu_a}(0)=[1,1]$,
all symmetric masks $a\in \lrs{0}{2}{2}$
satisfying $a(-k)=a(k)$ for all $k\in \Z$
with $\fs(a)\subseteq [-1,1]$ are given by
\[
a=\left\{
\begin{bmatrix}
\tfrac{1}{16}+t_1-t_2 &t_1-t_2\\[0.1em]
\tfrac{3}{16}-t_1+t_2 &\tfrac{1}{4}-t_1+t_2
\end{bmatrix},
\begin{bmatrix}
\frac{1}{8}+2t_1 &2t_1\\[0.1em]
\frac{3}{8}-2t_1 &\frac{1}{2}-2t_1
\end{bmatrix},
\begin{bmatrix}
\tfrac{1}{16}+t_1-t_2 &t_1-t_2\\[0.1em]
\tfrac{3}{16}-t_1+t_2 &\tfrac{1}{4}-t_1+t_2
\end{bmatrix}
\right\}_{[-1,1]}
\]
with an order $4$ matching filter $\vgu_a\in \lrs{0}{1}{2}$ satisfying
\[
\wh{\vgu_a}(\xi)=\left[1+\tfrac{16t_1-3}{96 t_2} \xi^2, 1+\tfrac{t_1}{6t_2} \xi^2\right]+\bo(|\xi|^4),\qquad \xi \to 0,
\]
where $t_1, t_2\in \R$ and $t_2\ne 0$.
By calculation, $\sr(a)=4$, $\sm_2(a)=3/2$, and its unique refinable vector function $\phi=[\phi_1,\phi_2]^\tp$ has symmetry $\phi(-\cdot)=\phi$ and
is a vector spline with support $[-1,1]$ given by
{\footnotesize{\[
\phi(x)|_{[0,1]}=
\begin{bmatrix}
\frac{16}{3}(1-x)(t_2x^2 - 2t_2x + t_1)\\
\frac{1}{3}(x - 1)(16t_2x^2 - 32 t_2x
 + 16t_1 - 3)\end{bmatrix},
\quad
\phi(x)|_{[-1,0]}=
\begin{bmatrix}
\frac{16}{3}(1+x)(t_2x^2 + 2t_2x + t_1)\\
-\frac{1}{3}(x+1)(16t_2x^2 + 32 t_2x
 + 16t_1 - 3)\end{bmatrix}.
\]}}
Hence, $\sm_\infty(\phi)=1$. Also note that $\phi_1(x)+\phi_2(x)=\max(0,1-|x|)$, which is the hat function (i.e., the B-spline function of order $2$) supported on $[-1,1]$.
Because $\tz_{a}$ has the special eigenvalues $2^0, 2^{-1}, 2^{-2}, 2^{-3}$ plus the additional eigenvalues $2^{-1}, 2^{-3}$, we obtain $\sm_\infty(a)=1$ because $1=\sm_2(a)-1/2\le \sm_\infty(a)\le 1$.
Hence, by \cref{thm:main} or \cref{thm:lsd}, the vector subdivision scheme with mask $a$ is a $\CH{0}$ convergent Lagrange subdivision scheme. This example of Lagrange subdivision schemes is of particular interest in numerical PDEs, because the spline refinable vector function $\phi$ belongs to the Sobolev space $H^1(\R)$, has very short support $[-1,1]$ for building multiwavelets on the interval $[0,1]$ (e.g., see \cite{hm21} for adapting multiwavelets from the real line to bounded intervals), and have the approximation order $4$ for fast convergence rates of numerical schemes.
\end{example}

\begin{example}\label{ex2}
Solving the equations induced by \eqref{sr:ma} with $m_a:=8$ and $\wh{\vgu_a}(0)=[1,1]$,
all symmetric masks $a\in \lrs{0}{2}{2}$
satisfying $a(-k)=a(k)$ for all $k\in \Z$
with $\fs(a)\subseteq [-2,2]$ are given by two families. The first family is given by
{\footnotesize{\[
a_1(0)=\begin{bmatrix}
6t_1+4t_2 &-\frac{3}{128}+6t_1+4t_2\\[0.1em]
\frac{3}{8}-6t_1-4t_2 &\frac{51}{128}-6t_1-4t_2
\end{bmatrix},\quad
a_1(1)=\begin{bmatrix}
4t_1-t_2 &-\frac{1}{64}+4t_1-t_2\\[0.1em]
\frac{1}{4}-4t_1+t_2 &\frac{17}{64}-4t_1+t_2
\end{bmatrix},\quad
a_1(2)=\begin{bmatrix}
t_1 &-\frac{1}{256}+t_1\\[0.1em]
\frac{1}{16}-t_1 &\frac{17}{256}-t_1
\end{bmatrix}
\]}}
with an order $8$ matching filter $\vgu_{a_1}\in \lrs{0}{1}{2}$ satisfying
\begin{align*}
\wh{\vgu_{a_1}}(\xi)=&\left[1+\tfrac{1}{6}\xi^2
+\tfrac{4032t_2 + 2688t_1 - 168}{241920t_2}\xi^4+
\tfrac{384t_2 + 448t_1 - 28}{241920t_2}\xi^6,\right. \\
&\qquad\qquad \left. 1+\tfrac{1}{6} \xi^2+\tfrac{16128t_2 + 10752t_1 - 42}{967680t_2}\xi^4
+\tfrac{1536t_2 + 1792t_1 - 7}{967680t_2}\xi^6\right]+\bo(|\xi|^8),\qquad \xi\to 0,
\end{align*}
%
%as $\xi \to 0$,
where $t_1,t_2\in \R$ with $t_2\ne 0$. By calculation, we obtain $\sr(a_1)=8$, $\sm_2(a_1)=7/2$, and
its unique refinable vector function $\phi=[\phi_1,\phi_2]^\tp$ has symmetry $\phi(-\cdot)=\phi$  and
is a vector spline with support $[-2,2]$ given by
{\footnotesize{
\begin{align*}
&\phi(x)|_{[0,1]}=
\begin{bmatrix}
-\frac{32}{35} t_2 x^7+\frac{64}{15} t_2 x^6
-\frac{128}{3}t_2 x^4+(\frac{128}{15}t_1+ \frac{256}{3}t_2 - \frac{1}{30})x^3+
-(\frac{256}{15}t_1+ \frac{256}{5}t_2- \frac{1}{15})x^2+
(\frac{512}{45}t_1+\frac{512}{105}t_2-\frac{2}{45})\\[0.1em]
\frac{1}{630}(x - 2)^3(192t_2x^4 - 1536t_2x^3 +4608t_2x^2 - 6144t_2x - 1792t_1 + 3072t_2 + 7)
\end{bmatrix},\\
&\phi(x)|_{[1,2]}=
\begin{bmatrix}
\frac{32}{35}t_2x^7 - \frac{64}{15}t_2x^6 + \frac{128}{3}t_2x^4 -(\frac{128}{15}t_1 + \frac{256}{3}t_2 -\frac{8}{15})x^3 + (\frac{256}{15}t_1 + \frac{256}{5}t_2 - \frac{16}{15})x^2 - \frac{512}{45}t_1 - \frac{512}{105}t_2 + \frac{32}{45}\\[0.1em]
\frac{8}{315}(x - 2)^3(-12t_2x^4 + 96t_2x^3 - 288t_2x^2 + 384t_2x + 112t_1 - 192t_2 - 7)
\end{bmatrix}.
\end{align*}
}}
Hence, $\sm_\infty(\phi)=3$.
Because $\tz_{a_1}$ has the special eigenvalues $2^0, 2^{-1},\ldots, 2^{-7}$ plus additional eigenvalues $2^{-3}, 2^{-7}$, we have $\sm_\infty(a_1)=3$ because $3=\sm_2(a_1)-1/2\le \sm_\infty(a_1)\le 3$.
Hence, by \cref{thm:main,thm:lsd}, the vector subdivision scheme with mask $a$ is a $\CH{2}$ convergent Lagrange subdivision scheme.

The second family of matrix masks $a_2\in \lrs{0}{2}{2}$ with $\fs(a_2)=[-2,2]$ is given by
\begin{align*}
&a_2(0)=
\begin{bmatrix}
(\frac{40}{3}t_1^2 - \frac{71}{24}t_1 + \frac{31}{768})t_2 + \frac{5}{2}t_1 - \frac{13}{128}
&(\frac{40}{3}t_1^2 - \frac{71}{24}t_1 + \frac{31}{768})t_2
\\[0.1em]
(-\frac{40}{3}t_1^2 + \frac{71}{24}t_1 - \frac{31}{768})t_2 - 5t_1 + \frac{71}{128} - \frac{15}{32t_2}
&(-\frac{40}{3}t_1^2 + \frac{71}{24}t_1 - \frac{31}{768})t_2 - \frac{5}{2}t_1 + \frac{29}{64}
\end{bmatrix},\\
&a_2(1)=\begin{bmatrix}
\frac{1}{16}-t_1 t_2 &-t_1 t_2\\[0.1em]
\frac{3}{16}+t_1 t_2 &\frac{1}{4}+t_1t_2
\end{bmatrix},\\
&a_2(2)=\begin{bmatrix}
(-\frac{20}{3}t_1^2 - \frac{7}{48}t_1
- \frac{1}{1536})t_2 - \frac{5}{4}t_1
- \frac{1}{256}
 &(-\frac{20}{3}t_1^2 - \frac{7}{48}t_1 - \frac{1}{1536})t_2 \\[0.1em]
(\frac{20}{3}t_1^2 + \frac{7}{48}t_1 + \frac{1}{1536})t_2 + \frac{5}{2}t_1
 + \frac{7}{256} + \frac{15}{64t_2}
&(\frac{20}{3}t_1^2 + \frac{7}{48}t_1 + \frac{1}{1536}t_2 + \frac{5}{4}t_1 +
\frac{3}{128}
\end{bmatrix}
\end{align*}
with an order $8$ matching filter $\vgu_{a_2}\in \lrs{0}{1}{2}$ satisfying
\begin{align*}
\wh{\vgu_{a_2}}(\xi)
&=\left[1+(\tfrac{1}{6}-\tfrac{16}{3} t_1-\tfrac{1}{t_2})\xi^2+(\tfrac{7}{360}-\tfrac{8}{9} t_1-\tfrac{1}{6t_2})\xi^4+
(\tfrac{31}{15120}-\tfrac{14}{135}t_1-
\tfrac{7}{360t_2})\xi^6,\right. \\
&\qquad\qquad \left. 1+(\tfrac{1}{6}-\tfrac{16}{3}t_1)\xi^2
+(\tfrac{7}{360}-\tfrac{8}{9}t_1)\xi^4+(\tfrac{31}{15120}
-\tfrac{14}{135}t_1)\xi^6\right]
+\bo(|\xi|^8),\qquad \xi \to 0,
\end{align*}
where $t_1,t_2\in \R$ with $t_2\ne 0$. By calculation, we have $\sr(a_2)=8$, $\sm_2(a_2)=11/2$, and
its unique refinable vector function $\phi=[\phi_1,\phi_2]^\tp$
has symmetry $\phi(-\cdot)=\phi$ and
is a vector spline with support $[-2,2]$ given by
{\small{\[
\phi|_{[0,1]}=
\begin{bmatrix}
\frac{t_2}{1260}\left[
(960t_1 - 45) x^7 +
(350-4480t_1) x^6
+(10080t_1 - 861)x^5
+ (770-11200t_1)x^4 \right.\\
\qquad \left. +
(8960t_1 - 280)x^2 - 5120t_1 + 64\right]\\
(-\frac{16}{21}t_1t_2 + \frac{1}{28}t_2 - \frac{1}{7})x^7 +
(\frac{32}{9}t_1t_2 - \frac{5}{18}t_2 + \frac{2}{3})x^6
-(8t_1t_2 - \frac{41}{60}t_2 + \frac{3}{2})x^5\\
\qquad\qquad \qquad + (\frac{80}{9}t_1t_2 - \frac{11}{18}t_2 + \frac{5}{3})x^4 -(\frac{64}{9}t_1t_2
 - \frac{2}{9}t_2 + \frac{4}{3})x^2 + \frac{256}{63}t_1t_2 - \frac{16}{315}t_2 + \frac{16}{21}
\end{bmatrix}
\]}}
and
{\small{\[
\phi|_{[1,2]}=
\begin{bmatrix}
\frac{t_2}{1260}(2 - x)^5\left[(320t_1+5)x^2
-(1280t_1+20)x + 160t_1 + 13\right]
\\
(x-2)^5((\frac{16}{63}t_1t_2 + \frac{1}{252}t_2
+ \frac{1}{21})x^2
-(\frac{64}{63}t_1t_2 + \frac{1}{63}t_2 + \frac{4}{21})x + \frac{8}{63}t_1t_2 + \frac{13}{1260}t_2+ \frac{1}{42}
\end{bmatrix}.
\]}}
Hence, $\sm_\infty(\phi)=5$.
Because $\tz_{a_2}$ has the special eigenvalues $2^0, 2^{-1},\ldots, 2^{-7}$ plus the additional eigenvalues $2^{-5}, 2^{-7}$, we conclude that $\sm_\infty(a_2)=5$ because $5=\sm_2(a_2)-1/2\le \sm_\infty(a_2)\le 5$.
Hence, by \cref{thm:main,thm:lsd}, the vector subdivision scheme with mask $a$ is a $\CH{4}$ convergent Lagrange subdivision scheme.

We now test the convergence rates of the vector subdivision schemes with the matrix mask $a_2\in \lrs{0}{2}{2}$ using
\begin{align*}
&u_1=\left\{\begin{bmatrix}
1\\
0\end{bmatrix}\right\}_{[0,0]},\qquad
u_2= \left\{\begin{bmatrix}\frac{31}{6}\\
-\frac{25}{6} \end{bmatrix}\right\}_{[0,0]},\quad
u_3=\left\{
\begin{bmatrix}
-\frac{1384}{315}\\
\frac{1144}{315}\end{bmatrix},
\begin{bmatrix}
\frac{301}{18}\\
-\frac{839}{63}
\end{bmatrix},
\begin{bmatrix}
-\frac{2648}{315}\\
\frac{2168}{315}\end{bmatrix},
\begin{bmatrix}
-\frac{401}{630} \\
\frac{163}{315}
\end{bmatrix}
\right\}_{[0,3]},\quad\\
&u_4=\left\{\begin{bmatrix}
-1\\
1\end{bmatrix}\right\}_{[0,0]},
\quad
u_5=\left\{
\begin{bmatrix}
-\frac{1832}{9}\\
\frac{1496}{9}
\end{bmatrix},
\begin{bmatrix}
\frac{6493}{18}\\
-\frac{5173}{18}\end{bmatrix},
\begin{bmatrix}
-\frac{1708}{9}\\
\frac{1396}{9}\end{bmatrix},
\begin{bmatrix}
-\frac{127}{18}\\
\frac{103}{18}\end{bmatrix}\right\}_{[0,3]}.
\end{align*}
Then
\begin{align*}
&\wh{\vgu_{a_2}}(\xi)\wh{u_1}(\xi)=1+\bo(|\xi|^2),\quad
\wh{\vgu_{a_2}}(\xi)\wh{u_2}(\xi)=1+\bo(|\xi|^4),\quad
\wh{\vgu_{a_2}}(\xi)\wh{u_3}(\xi)=1+\bo(|\xi|^8),\quad\\
&\wh{\vgu_{a_2}}(\xi)\wh{u_4}(\xi)=(i\xi)^2+\bo(|\xi|^4),\quad
\wh{\vgu_{a_2}}(\xi)\wh{u_5}(\xi)=(i\xi)^2+\bo(|\xi|^8),\quad \xi \to0.
\end{align*}
According to \cref{thm:fastconverg} and noting $\sm_\infty(a_2)=5$, the convergence rates are $2, 4, 5$
for approximating $\phi$ using $u_1, u_2, u_3$, respectively, while
the convergence rates are $2$ and $3$
for approximating $\phi''$ using $u_4$ and $u_5$.
See \cref{fig:ex2} for the graphs of $\phi$ and $\phi''$ and the convergence rates of the Lagrange subdivision scheme with mask $a_2$ for $(t_1,t_2)=(1,-1)$.
\end{example}

\begin{figure}[hbtp]
\begin{subfigure}[b]{0.2\textwidth}
%\raisebox{0.1cm}{
\includegraphics[width=\textwidth,height=0.6\textwidth]{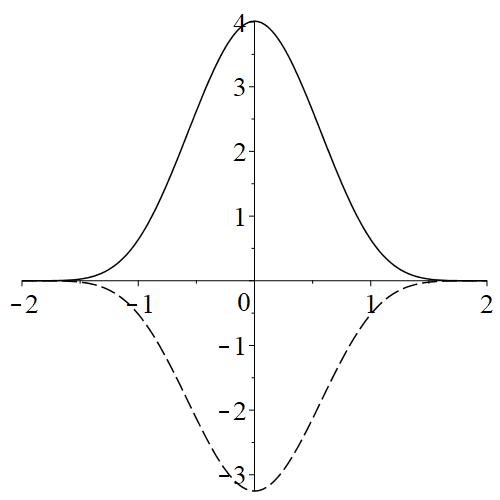}
%}
\caption{$\phi$}
\end{subfigure}
\begin{subfigure}[b]{0.2\textwidth} %\raisebox{0.1cm}{
\includegraphics[width=\textwidth,height=0.6\textwidth]{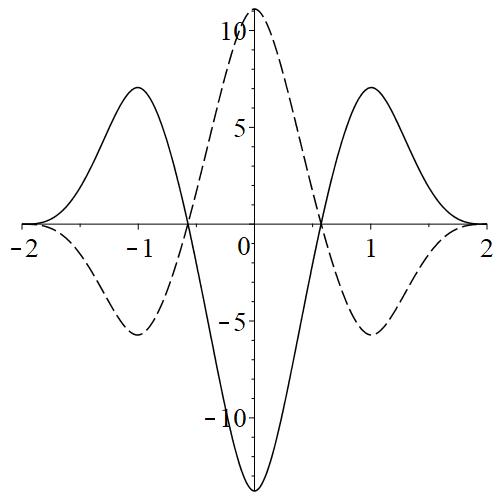}
%}
\caption{$\phi''$}
\end{subfigure}
\begin{subfigure}[b]{0.28\textwidth}
\begin{tikzpicture}[scale=0.70]
\begin{axis}
[
height=\textwidth,
width=\textwidth,
axis y line=left,
axis x line=middle,
xminorticks=true,
yminorticks=true,
minor tick num=1,
ymin=-5, ymax=45,
xmin=1, xmax=10,
xtick={0,2,4,6,8,10},
ytick={-5,0,10,20,30,40},
]
\addplot[] coordinates
{
(1,-4.32530)
(2,-2.03069)
(3,0.104175)
(4,2.14328)
(5,4.15343)
(6,6.15600)
(7,8.15665)
(8,10.1568)
(9,12.1569)
(10,14.1569)
};
\addplot[] coordinates
{
(1,4.06921)
(2,7.43446)
(3,11.3115)
(4,15.2824)
(5,19.2752)
(6,23.2733)
(7,27.2729)
(8,31.2728)
(9,35.2729)
(10,39.2727)
};
\addplot[] coordinates
{
(1,1.09346)
(2,6.11992)
(3,11.1266)
(4,16.1284)
(5,21.1288)
(6,26.1289)
(7,31.1288)
(8,36.1290)
(9,41.1289)
(10,46.1290)
};
\end{axis}
\end{tikzpicture}
\caption{{\tiny{Convergence for $\phi$}}}
\end{subfigure}
\begin{subfigure}[b]{0.28\textwidth}
\begin{tikzpicture}[scale=0.70]
\begin{axis}
[
height=\textwidth,
width=\textwidth,
axis y line=left,
axis x line=middle,
xminorticks=true,
yminorticks=true,
minor tick num=1,
ymin=-5, ymax=20,
xmin=1, xmax=10,
xtick={0,2,4,6,8,10},
ytick={-4,0,5,10,15,20},
]
\addplot[] coordinates
{
(1,-2.43557)
(2,-0.918534)
(3,0.982343)
(4,2.95860)
(5,4.95273)
(6,6.95127)
(7,8.95091)
(8,10.9508)
(9,12.9508)
(10,14.9508)
};
\addplot[] coordinates
{
(1,-5.26642)
(2,-2.21179)
(3,0.802197)
(4,3.80573)
(5,6.80662)
(6,9.80685)
(7,12.8069)
(8,15.8069)
(9,18.8069)
(10,21.8070)
};
\end{axis}
\end{tikzpicture}
\caption{{\tiny{Convergence for $\phi''$}}}
\end{subfigure}	
\caption{The refinable vector function $\phi=[\phi_1,\phi_2]^\tp$ and $\phi''$ in \cref{ex2} using mask $a_2$ with $(t_1,t_2)=(1,-1)$. Solid lines for $\phi_1$ and dashed lines for $\phi_2$.
(c) The convergence rates
%$2$, $4$ and $5$
are the slopes of $-\log_2 \err_{u_j}(n)$ for the subdivision level $n=1,\ldots,10$ with $j=1,2,3$ for computing $\phi$.
(d) The convergence rates
%$2$ and $3$
are the slopes of $-\log_2 \err_{u_j}(n)$ for  $n=1,\ldots,10$ with $j=4,5$ for computing $\phi''$.
}
\label{fig:ex2}
\end{figure}

Many Hermite subdivision schemes were reported in \cite{dm09,ms19,han20,han21} and references therein. Here we only provide an example of Hermite subdivision schemes with fast convergence rates.

\begin{example}\label{ex3}
\normalfont
Solving the linear equations in \eqref{sr:ma} with $m_a:=4$ and $\wh{\vgu_a}(\xi)=[1,i\xi]+\bo(|\xi|^{m_a})$ as $\xi\to 0$,
all symmetric masks $a\in \lrs{0}{2}{2}$ with $\fs(a)\subseteq [-2,2]$ and $\sr(a)\ge 4$ are given by
{\footnotesize{\[
a=\left\{
\begin{bmatrix}
2t_1 &-3t_2\\[0.1em]
-t_1 &t_2 \\[0.1em]
\end{bmatrix},
\begin{bmatrix}
\frac{1}{4} &\frac{3}{8}\\[0.1em]
-\frac{1}{16} &-\frac{1}{16}\\[0.1em]
\end{bmatrix},
\begin{bmatrix}
\frac{1}{2}-4t_1 &0 \\[0.1em]
0 &\frac{1}{4}+4t_2\\[0.1em]
\end{bmatrix},
\begin{bmatrix}
\frac{1}{4} &-\frac{3}{8}\\[0.1em]
\frac{1}{16} &-\frac{1}{16}\\[0.1em]
\end{bmatrix},
\begin{bmatrix}
2t_1 &3t_2\\[0.1em]
t_1 &t_2 \\[0.1em]
\end{bmatrix}
\right\}_{[-2,2]},
\]
}}
where $t_1, t_2\in \R$. The mask $a$ has order $5$ sum rules if and only if $t_1=\frac{1}{128}$ and its matching filter $\vgu_a$ is given by $\wh{\vgu_a}(\xi)=[1+\frac{1}{360}\xi^4, i\xi]+\bo(|\xi|^5)$ as $\xi \to 0$.
For $(t_1,t_2)=(\frac{1}{128},-\frac{13}{512})$, we have $\sr(a)=5$ and $\sm_2(a)\approx 4.5335$. Hence, $\sm_\infty(a)\ge \sm_2(a)-0.5\approx 4.0335$ and $\phi\in (\CH{4})^2$. Therefore, by \cref{thm:main,thm:hsd},
its vector subdivision scheme is a $\CH{4}$ convergent Hermite subdivision scheme of order $2$.
The mask $a$ has order $6$ sum rules if and only if $(t_1, t_2)=(\frac{1}{128},-\frac{7}{256})$ and the matching filter $\vgu_a$ satisfies $\wh{\vgu_a}(\xi)=[1+\frac{1}{360}\xi^4, i\xi+\frac{7}{1080}(i\xi)^5]+\bo(|\xi|^6)$ as $\xi \to 0$. For $t_1=\frac{1}{128}$ and $t_2=-\frac{7}{256}$, we have $\sr(a)=6$ and $\sm_2(a)\approx 4.3266$.

For the matrix mask $a$ with $(t_1,t_2)=(\frac{1}{128},-\frac{13}{512})$,
we now test the convergence rates of the Hermite subdivision scheme using
\[
u_1=\left\{\begin{bmatrix}
1\\
0\end{bmatrix}\right\}_{[0,0]},\quad
u_2=\left\{
\begin{bmatrix}
\frac{13}{12}\\
-\frac{1}{30}\end{bmatrix},
\begin{bmatrix}
-\frac{1}{15}\\
-\frac{1}{15}
\end{bmatrix},
\begin{bmatrix}
-\frac{1}{60}\\
0\end{bmatrix}
\right\}_{[0,2]},\quad
u_3=\left\{\begin{bmatrix}
0\\
1\end{bmatrix}\right\}_{[0,0]}.
\]
Then
$\wh{\vgu_a}(\xi)\wh{u_1}(\xi)=1+\bo(|\xi|^4)$,
$\wh{\vgu_a}(\xi)\wh{u_2}(\xi)=1+\bo(|\xi|^5)$, and
$\wh{\vgu_a}(\xi)\wh{u_3}(\xi)=i\xi+\bo(|\xi|^5)$
as $\xi \to0$.
According to \cref{thm:fastconverg}, the convergence rates are $4$ and $\sm_\infty(a)$
for approximating $\phi$ using $u_1$ and $u_2$, respectively, while
the convergence rate is $\sm_\infty(a)-1$
for approximating $\phi'$ using $u_3$.
See \cref{fig:ex3} for the graphs of $\phi$ and $\phi'$ and the convergence rates of the Hermite subdivision scheme of order $2$.
Because $\wh{\vgu_a}(\xi)=[1,i\xi]+\bo(|\xi|^{4})$ as $\xi\to 0$, the equation in \eqref{vgua:hemrite} with $m=3$ is satisfied and hence, this is also an example of a fast $\CH{4}$ convergent Hermite subdivision scheme of order $2$.
\end{example}

\begin{figure}[hbtp]
\begin{subfigure}[b]{0.2\textwidth}
%\raisebox{0.1cm}{
\includegraphics[width=\textwidth,height=0.6\textwidth]{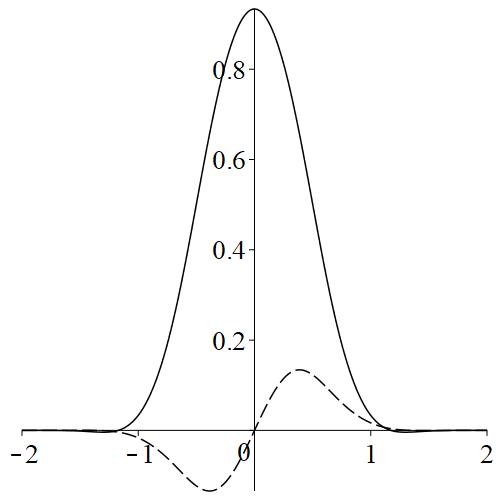}
%}
\caption{$\phi$}
\end{subfigure}
\begin{subfigure}[b]{0.2\textwidth} %\raisebox{0.1cm}{
\includegraphics[width=\textwidth,height=0.6\textwidth]{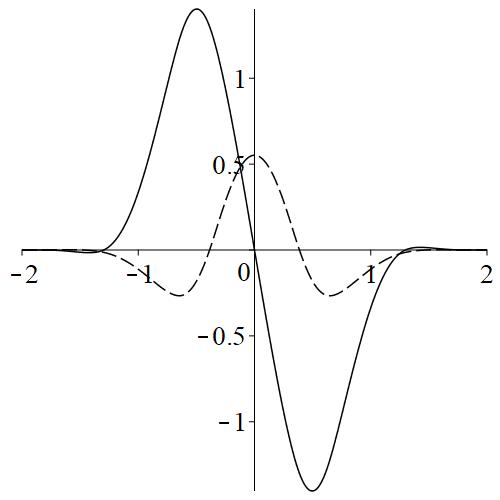}
%}
\caption{$\phi'$}
\end{subfigure}
\begin{subfigure}[b]{0.28\textwidth}
\begin{tikzpicture}[scale=0.70]
\begin{axis}
[
height=\textwidth,
width=\textwidth,
axis y line=left,
axis x line=middle,
xminorticks=true,
yminorticks=true,
minor tick num=1,
ymin=-5, ymax=45,
xmin=1, xmax=10,
xtick={0,2,4,6,8,10},
ytick={-5,0,10,20,30,40},
]
\addplot[] coordinates
{
(1,7.90692)
(2,10.7079)
(3,14.2875)
(4,17.8782)
(5,21.5834)
(6,25.3653)
(7,29.1828)
(8,33.0339)
(9,36.9082)
(10,40.8004)
%44.7073, 48.6259)
};
\addplot[] coordinates
{
(1,7.15203)
(2,10.4302)
(3,15.0116)
(4,18.9401)
(5,23.0666)
(6,27.1959)
(7,31.2871)
(8,35.4031)
(9,39.5099)
(10,43.6182)
};
\end{axis}
\end{tikzpicture}
\caption{{\tiny{Convergence for $\phi$}}}
\end{subfigure}
\begin{subfigure}[b]{0.28\textwidth}
\begin{tikzpicture}[scale=0.70]
\begin{axis}
[
height=\textwidth,
width=\textwidth,
axis y line=left,
axis x line=middle,
xminorticks=true,
yminorticks=true,
minor tick num=1,
ymin=-5, ymax=30,
xmin=1, xmax=10,
xtick={0,2,4,6,8,10},
ytick={-4,0,5,10,15,20,25,30},
]
\addplot[] coordinates
{
(1,3.36314)
(2,6.77818)
(3,10.0420)
(4,13.2365)
(5,16.3669)
(6,19.4440)
(7,22.5690)
(8,25.6726)
(9,28.7811)
(10,31.8906)
};
\end{axis}
\end{tikzpicture}
\caption{{\tiny{Convergence for $\phi'$}}}
\end{subfigure}	
\caption{The refinable vector function $\phi=[\phi_1,\phi_2]^\tp$ and $\phi'=[\phi_1',\phi_2']^\tp$ in \cref{ex3} using mask $a$ with $(t_1,t_2)=(\frac{1}{128},-\frac{13}{512})$. Solid lines for $\phi_1$ and dashed lines for $\phi_2$.
(c) The convergence rates
%(approximately $3.9$ and $4.1$)
are the slopes of $-\log_2 \err_{u_j}(n)$ for the subdivision level $n=1,\ldots,10$ with $j=1,2$ for computing $\phi$.
(d) The convergence rates
% (approximately $3.1$)
are the slopes of $-\log_2 \err_{u_3}(n)$ for  $n=1,\ldots,10$ for computing $\phi'$.
}
\label{fig:ex3}
\end{figure}

\begin{example}\label{ex4}
\normalfont
Solving the linear equations in \eqref{sr:ma} with $m_a:=4$ and $\wh{\vgu_a}(\xi)=[1,e^{i\xi/2}]+\bo(|\xi|^{m_a})$ as $\xi\to 0$,
all symmetric masks $a\in \lrs{0}{2}{2}$ with $\fs(a)\subseteq [-2,3]$ and $\sr(a)\ge 4$ are given by
{\tiny{\[
a=\left\{
\begin{bmatrix}
t_1 &-\frac{1}{32}-4t_3\\[0.1em]
0 &t_3 \\[0.1em]
\end{bmatrix},
\begin{bmatrix}
-4t_2 &\frac{9}{32}-4t_3\\[0.1em]
t_2 &-\frac{1}{32}+t_3\\[0.1em]
\end{bmatrix},
\begin{bmatrix}
\frac{1}{2}+6t_1 &\frac{9}{32}-4t_3 \\[0.1em]
-4t_1 &\frac{9}{32}+6t_3\\[0.1em]
\end{bmatrix},
\begin{bmatrix}
-4t_2 &-\frac{1}{32}-4t_3\\[0.1em]
\frac{1}{2}+6t_2 &\frac{9}{32}+6t_3\\[0.1em]
\end{bmatrix},
\begin{bmatrix}
t_1 &0\\[0.1em]
-4t_1 &-\frac{1}{32}+t_3 \\[0.1em]
\end{bmatrix},
\begin{bmatrix}
0 &0\\[0.1em]
t_2 &t_3 \\[0.1em]
\end{bmatrix}
\right\}_{[-2,3]},
\]
}}
where $t_1, t_2, t_3\in \R$.
If $(t_1, t_2,t_3)=(-\frac{1}{64},
-\frac{1}{32},-\frac{1}{128})$,
then the mask $a$ has order $5$ sum rules,
$\sm_2(a)\approx 3.8853$, and
\[
\wh{\vgu_a}(\xi)=[1, 1 + \tfrac{1}{2}(i\xi) +\tfrac{1}{8}(i\xi)^2 +\tfrac{1}{48}(i\xi)^3 + \tfrac{1}{96}(i\xi)^4]+\bo(|\xi|^5),\qquad \xi \to 0.
\]
Hence, $\sm_\infty(a)\ge \sm_2(a)-0.5\approx 3.3853$ and $\phi\in (\CH{3})^2$. Therefore, by \cref{thm:main},
the vector subdivision scheme with mask $a$ is a $\CH{3}$ convergent balanced vector subdivision scheme having order $4$ linear-phase moments for the polynomial-interpolation property (see \cite[Theorem~5.2]{han21} for details).
For the matrix mask $a$ with $(t_1, t_2,t_3)=(-\frac{1}{64},
-\frac{1}{32},-\frac{1}{128})$,
we now test the convergence rates of the vector subdivision scheme using
\[
u_1=\left\{\begin{bmatrix}0\\ 1\end{bmatrix}\right\}_{[0,0]},\quad
u_2=\left\{\begin{bmatrix}
1\\ 0\end{bmatrix}\right\}_{[0,0]},\quad
u_3=\left\{\begin{bmatrix}
-2\\ 2\end{bmatrix}\right\}_{[0,0]},\quad
u_4=\left\{\begin{bmatrix}
1\\ \frac{2}{3}\end{bmatrix},\quad
\begin{bmatrix}
\frac{1}{3}\\
-2\end{bmatrix}\right\}_{[0,1]}.
\]
Then
$\wh{\vgu_a}(\xi)\wh{u_1}(\xi)=1+\bo(|\xi|)$,
$\wh{\vgu_a}(\xi)\wh{u_2}(\xi)=1+\bo(|\xi|^5)$,
$\wh{\vgu_a}(\xi)\wh{u_3}(\xi)=i\xi+\bo(|\xi|^2)$,
and $\wh{\vgu_a}(\xi)\wh{u_4}(\xi)=i\xi+\bo(|\xi|^5)$
as $\xi \to0$.
According to \cref{thm:fastconverg}, the convergence rates are $1$ and $\sm_\infty(a)$
for approximating $\phi$ using $u_1$ and $u_2$, respectively, while
the convergence rate is $1$ and $\sm_\infty(a)-1$
for approximating $\phi'$ using $u_3$ and $u_4$.
See \cref{fig:ex4} for the graphs of $\phi$ and $\phi'$ and the convergence rates of the balanced vector subdivision scheme. Due to the balanced property (e.g., see \cite{han09}) and linear-phase moments for polynomial-interpolation property, this example of balanced vector subdivision schemes is of particular interest in multiwavelet methods for signal and image processing.
\end{example}

\begin{figure}[hbtp]
\begin{subfigure}[b]{0.2\textwidth}
%\raisebox{0.1cm}{
\includegraphics[width=\textwidth,height=0.6\textwidth]{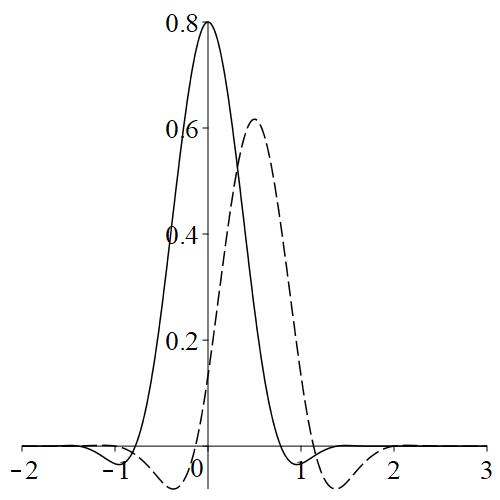}
%}
\caption{$\phi$}
\end{subfigure}
\begin{subfigure}[b]{0.2\textwidth} %\raisebox{0.1cm}{
\includegraphics[width=\textwidth,height=0.6\textwidth]{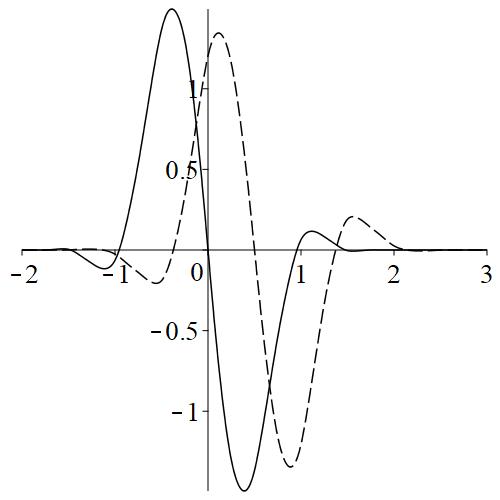}
%}
\caption{$\phi'$}
\end{subfigure}
\begin{subfigure}[b]{0.28\textwidth}
\begin{tikzpicture}[scale=0.70]
\begin{axis}
[
height=\textwidth,
width=\textwidth,
axis y line=left,
axis x line=middle,
xminorticks=true,
yminorticks=true,
minor tick num=1,
ymin=-5, ymax=45,
xmin=1, xmax=10,
xtick={0,2,4,6,8,10},
ytick={-5,0,10,20,30,40},
]
\addplot[] coordinates
{
(1,1.44338)
(2,2.45672)
(3,3.42572)
(4,4.42085)
(5,5.42070)
(6,6.42013)
(7,7.42002)
(8,8.42000)
(9,9.42001)
(10,10.4200)
};
\addplot[] coordinates
{
(1,6.32195)
(2,10.3220)
(3,14.3220)
(4,18.3220)
(5,22.3220)
(6,26.3221)
(7,30.3221)
(8,34.3220)
(9,38.3220)
(10,42.3220)
};
\end{axis}
\end{tikzpicture}
\caption{{\tiny{Convergence for $\phi$}}}
\end{subfigure}
\begin{subfigure}[b]{0.28\textwidth}
\begin{tikzpicture}[scale=0.70]
\begin{axis}
[
height=\textwidth,
width=\textwidth,
axis y line=left,
axis x line=middle,
xminorticks=true,
yminorticks=true,
minor tick num=1,
ymin=-5, ymax=30,
xmin=1, xmax=10,
xtick={0,2,4,6,8,10},
ytick={-4,0,5,10,15,20,25},
]
\addplot[] coordinates
{
(1,0.415039)
(2,1.35615)
(3,2.29956)
(4,3.27551)
(5,4.26700)
(6,5.26425)
(7,6.26341)
(8,7.26316)
(9,8.26309)
(10,9.26307)
};
\addplot[] coordinates
{
(1,3.00001)
(2,6.00002)
(3,8.86337)
(4,11.6042)
(5,14.2861)
(6,16.9112)
(7,19.4855)
(8,22.0173)
(9,24.5158)
(10,26.9893)
};
\end{axis}
\end{tikzpicture}
\caption{{\tiny{Convergence for $\phi'$}}}
\end{subfigure}	
\caption{The refinable vector function $\phi=[\phi_1,\phi_2]^\tp$ and $\phi'$ in \cref{ex4} using mask $a$ with $(t_1, t_2,t_3)=(-\frac{1}{64},
-\frac{1}{32},-\frac{1}{128})$. Solid lines for $\phi_1$ and dashed lines for $\phi_2$.
(c) The convergence rates
%(approximately $1$ and $4$)
are the slopes of $-\log_2 \err_{u_j}(n)$ for the subdivision level $1\le n\le 10$ with $j=1,2$ for computing $\phi$.
(d) The convergence rates
%(approximately $1$ and $2.4$)
are the slopes of $-\log_2 \err_{u_j}(n)$ with $j=3,4$ for $1\le n\le 10$ for computing $\phi'$.
}
\label{fig:ex4}
\end{figure}

\end{document}